\numberwithin{equation}{section}
\theoremstyle{plain}
\newtheorem{theorem}{Theorem}[section]
\newtheorem{corollary}[theorem]{Corollary}
\newtheorem{lemma}[theorem]{Lemma}
\newtheorem{proposition}[theorem]{Proposition}
\newenvironment{manualtheorem}[1]{%
  \manualtheoreminner
}{\endmanualtheoreminner}
\theoremstyle{definition}
\newtheorem{remark}[theorem]{Remark}
\newenvironment{example}
  {\pushQED{\qed}\examplex}
  {\popQED\endexamplex}
\newtheorem{definition}[theorem]{Definition}
\newcommand{\Cc}{{\mathbb C}}
\newcommand{\Nn}{{\mathbb N}}
\newcommand{\Qq}{{\mathbb Q}}
\newcommand{\Rr}{{\mathbb R}}
\newcommand{\Zz}{{\mathbb Z}}
\newcommand{\Pp}{{\mathbb P}}
\newcommand{\VV}{\mathcal{V}}
\newcommand{\cs}{\mathcal{S}}
\newcommand{\cn}{\mathcal{N}}
\newcommand{\ct}{\mathcal{T}}
\newcommand{\ci}{\mathcal{I}}
\DeclareMathOperator{\sign}{sign}
\DeclareMathOperator{\image}{image}
\DeclareMathOperator{\Pf}{Pf}	
\DeclareMathOperator{\Skew}{Skew}
\newcommand{\zmodtwo}{\Zz/2\Zz}
\newcommand{\isom}{\cong}
\definecolor{benpurple}{RGB}{180, 0, 240}
\definecolor{joered}{RGB}{240,0,0}
\definecolor{samcyan}{RGB}{0,240,250}
\definecolor{MidnightBlue}{RGB}{25,25,112}
\title{The Pfaffian Structure of CFN Phylogenetic Networks}
\author{Joseph Cummings}
\address{University of Notre Dame, Indiana, USA}
\email{josephcummings03@gmail.com}
\author{Elizabeth Gross}
\address{University of Hawai`i at M$\bar{\text{a}}$noa, Hawai`i, USA}
\email{egross@hawaii.edu}
\author{Benjamin Hollering}
\address{Technische Universit\"at M\"unchen, DE}
\email{benhollering@gmail.com}
\author{Samuel Martin}
\address{Earlham Institute, Norwich, UK}
\address{European Bioinformatics Institute, EMBL-EBI, Hinxton, UK}
\email{samuel.martin@ebi.ac.uk}
\author{Ikenna Nometa}
\address{University of Hawai`i at M$\bar{\text{a}}$noa, Hawai`i, USA}
\email{inometa@hawaii.edu}
\keywords{phylogenetic network, Markov model, group-based model, 
phylogenetic invariant, algebraic variety, Pfaffian}
\subjclass{92B10, 62R01, 13P25}
\begin{document}
\maketitle

\begin{abstract}
Algebraic techniques in phylogenetics have historically been successful at proving identifiability results and have also led to novel reconstruction algorithms. In this paper, we study the ideal of phylogenetic invariants of the Cavender-Farris-Neyman (CFN) model on a phylogenetic network with the goal of providing a description of the invariants which is useful for network inference. It was previously shown that to characterize the invariants of any level-1 network, it suffices to understand all sunlet networks, which are those consisting of a single cycle with a leaf adjacent to each cycle vertex. We show that the parameterization of an affine open patch of the CFN sunlet model, which intersects the probability simplex, factors through the space of skew-symmetric matrices via Pfaffians.  We then show that this affine patch is isomorphic to a determinantal variety and give an explicit Gr{\"o}bner basis for the associated ideal, which involves only $\binom{n}{2}$ coordinates rather than $2^{n}$. Lastly, we show that sunlet networks with at least 6 leaves are identifiable using only these polynomials and run extensive simulations, which show that these polynomials can be used to accurately infer the correct network from DNA sequence data. 
\end{abstract}

\section{Introduction}\label{sec-1::intro}
Phylogenetic networks are directed acyclic graphs that are used to represent the evolutionary history of a set of species where reticulation events, such as hybridization or horizontal gene transfer, may have occurred. A phylogenetic network is typically equipped with a parametric statistical model of evolution, which enables phylogenetic analyses and phylogenetic network inference from genome data. One recent approach to phylogenetic network inference is rooted in computational algebraic geometry.  For example, algebraic methods have recently been used to establish identifiability for several classes of phylogenetic network models \cite{allman2019species,banos2019identifying, gross2023dimensions, gross2018distinguishing, gross2021distinguishing, hollering2021identifiability} and for phylogenetic network inference \cite{allman2019nanuq, barton2022statistical,  martin2023algebraic, wu2022ultrafast}. In particular, \cite{barton2022statistical} and \cite{martin2023algebraic} investigate using algebraic techniques to infer phylogenetic networks from molecular sequence data. Most algebraic results on identifiability or inference utilize the \emph{phylogenetic invariants} of the model, which are polynomials that vanish on every distribution in the model; thus, one common problem for any statistical model associated with a phylogenetic network is to characterize the set of invariants of the model.

While ideals of phylogenetic invariants are useful for different statistical inference problems, very little is known about these ideals for \emph{network-based Markov models}, or equivalently, \emph{displayed tree models}. In \cite{casanellas2021rank}, a partial description of the invariants for equivariant models is given, but these invariants do not suffice to reconstruct a network entirely. One natural place to begin is the study of \emph{group-based} Markov models since the discrete Fourier transform can be used to greatly simplify the parameterization \cite{evans1993invariants, hendy1996complete}, which makes it much more amenable to study from an algebraic perspective. This is further evidenced by the fact that invariants of group-based models for trees are completely characterized \cite{sturmfels2005toric}, but for many other equivariant models, they are still unknown \cite{allman2008gmm}.  The study of the invariants for group-based models on networks was started in  \cite{gross2018distinguishing} with a focus on identifiability. More recently, in \cite{cummings2024invariants}, the authors show that the invariants of any group-based model on a family of phylogenetic networks called \emph{level-1} can be completely determined from the invariants on phylogenetic trees and \emph{sunlet networks} under the same model. Sunlet networks are a type of phylogenetic network that consist of a single cycle, with one leaf adjacent to each vertex in the cycle. Formal definitions of level-1 phylogenetic networks and sunlet networks are given in Section {\ref{sec2::Prelim}}. This result mirrors the well-known graph decomposition of level-1 phylogenetic networks into trees and sunlets.

Typically, methods in molecular phylogenetics use DNA sequence data, so that the states in evolutionary models are the four nucleotides A, C, G, and T. One way to simplify the analysis of DNA sequence data is through RY-coding, where these nucleotides are recoded to purines (R) and pyrimidines (Y). The Cavender-Farris-Neyman (CFN) model is a two-state Markov model of evolution, where the two states are purines and pyrimidines and each transition matrix is assumed to be symmetric. It is also known variously as the 2-state symmetric model, the binary symmetric model, and the Jukes-Cantor binary model. In the context of group-based models, the CFN model is the \emph {general group-based model} for the group $\mathbb{Z}/2\mathbb{Z}$ {\cite{algstat}}. We formally define the CFN model in Section {\ref{sec2::Prelim}}. In {\cite{cummings2024invariants}}, the authors identify all quadratic invariants in the sunlet network ideal for the CFN model.

In this paper, we complete the study of the invariants of the CFN model on a sunlet network. In particular, we show that the parameterization of the CFN sunlet network model factors through the set of skew-symmetric matrices via Pfaffians, allowing us to work with the model in a new coordinate system. While this is not directly related to the results in \cite{ColbyPfaff}, it does provide another surprising link between Pfaffians and phylogenetics.  The following theorem summarizes our main results. 

\begin{manualtheorem}{Theorem}
   Let $\VV_{\cs_n} \subseteq \Pp^{2^{n-1} - 1}$ be the \emph{projective} phylogenetic variety associated to the CFN model on a $n$-leaf sunlet network for $n \geq 4$. 
   Denote the affine open patch of $\VV_{\cs_n}$ where the sum of the coordinates is 1 by $U_n$.
    Then:
    \begin{enumerate}
        \item [(a)] $U_n$ \emph{embeds} into the space of $n\times n$ skew-symmetric matrices via taking sub-Pfaffians.
        \item [(b)] Let $\Omega$ be a $n\times n$ skew-symmetric matrix with variable entries $x_{ij}$. In these coordinates, $U_n$ is cut out by the following minors of $\Omega$:
        \begin{enumerate}
            \item [(i)] $\det(\Omega_{\{i,j\},\{k,\ell\}})$ where $1 < i < j < k < \ell \leq n$ and 
            \item [(ii)] $\det(\Omega_{\{1,i_2,i_3\}, \{j_1,j_2,j_3\}})$ where $1 < i_2 < j_1 < j_2 < j_3 < i_3 \leq n$,
        \end{enumerate}
        where $\Omega_{S,T}$ is the submatrix of $\Omega$ whose rows and columns are indexed by the elements of $S$ and $T$, respectively.
    \end{enumerate} 
\end{manualtheorem}

The subset $U_n$ of $\VV_{\cs_n}$ where the sum of probability coordinates is $1$ is an affine variety that contains the statistical model, and so we call it the \emph{statistically relevant affine open patch}. For phylogenetic trees, these varieties have been well-studied. For example, in \cite{casanellas2007geometry}, the authors study the analogous variety for unrooted phylogenetic trees under the Kimura 3-parameter model, which they term the Kimura variety for a tree $T$. Here, we also show that the invariants of the original model in projective space can be obtained from the minors in part (b) of the above theorem, along with the relations between the Pfaffians of a skew-symmetric matrix by homogenization and saturation. These results essentially give a complete description of the invariants of the CFN model. Lastly, we use this new description of the CFN model to show that sunlet networks with at least 6 leaves are generically identifiable from data observed at the leaves and provide simulations that showcase the effectiveness of using these determinantal invariants for phylogenetic network inference.

The remainder of this paper is organized as follows. In \cref{sec2::Prelim}, we review background material on phylogenetic networks, network-based Markov models, and Pfaffians. In \cref{sec:3}, we show that the parameterization of the sunlet network model factors through the parameterization of the Pfaffians of a skew-symmetric matrix and utilize this to give a complete description of the invariants of the statistically relevant affine open patch of the model in a new coordinate system. In \cref{sec:inference}, we prove that sunlet networks with at least 6 leaves are generically identifiable under the CFN model and then show that these invariants can be used to accurately infer the correct network from simulated DNA sequence data.

\section{Preliminaries}\label{sec2::Prelim}
This section provides a brief overview of the Cavender-Farris-Neyman (CFN) model on phylogenetic trees and networks. We then provide some background on the structure of the vanishing ideal of the CFN model associated to a phylogenetic tree, which was characterized in \cite{sturmfels2005toric}, and review the multigrading described in \cite{cummings2024invariants}. 

\subsection{Phylogenetic Networks}
In this subsection, we provide some brief background on phylogenetic networks. Our notation and terminology are adapted from \cite{cummings2024invariants, gross2018distinguishing}. For more information on the combinatorics of phylogenetic networks, we refer the reader to \cite{gross2020phylogenetic, steel2016phylogeny}. 

A \emph{binary phylogenetic tree} $\ct$ on $n$ leaves is a tree (i.e. a directed graph with no cycles) with a distinguished vertex of degree 2, called the \emph{root} and denoted $\rho$, and such that every other vertex has either degree 3 or degree 1. The vertices of degree 1 are called \emph{leaves} and there are exactly $n$ of them. In applications, the leaves of $\ct$ are labelled by the names of taxa, however we will always label leaves by the set of integers $[n]=\{1, \ldots, n\}$. We denote by $E(\ct)$ the set of edges of $\ct$, and by $V(\ct)$ the set of vertices of $\ct$. 

A binary \emph{phylogenetic network} on $n$ leaves is a rooted, directed, acyclic graph, where the root vertex is a distinguished vertex of out-degree two, and all other vertices have either in-degree one and out-degree two (these are called \emph{tree vertices}), in-degree two and out-degree one (these are called \emph{reticulation vertices}), or in-degree one and out-degree zero (these are called \emph{leaf vertices}). The set of leaf vertices is identified with the set of integers $[n]$. Edges directed into a reticulation vertex are called \emph{reticulation edges}, and all other edges are called \emph{tree edges}. In this paper, we will focus on the \emph{Cavender-Farris-Neyman} model \cite{cavender1978taxonomy, farris1973probability, neyman1971statistical}, which is \emph{group-based} (see Definition {\ref{def:groupbased}} below) and thus \emph{time-reversible} (in the sense of Markov chains, see e.g. {\cite{norris1997markov}}). This implies that it is impossible to identify the location of the root under these models, so we restrict our attention to the underlying \emph{semi-directed} network of the phylogenetic network. The underlying semi-directed network is a mixed graph obtained by suppressing the root and undirecting all tree edges in the network. The reticulation edges remain directed into the reticulation vertex. Since the reticulation edges are implicitly directed into the reticulation vertex, we typically omit the arrows when drawing semi-directed networks. This is illustrated in Figure \ref{fig:RootedToSemidirected}.

\begin{figure}
    \centering
    \begin{subfigure}[b]{0.3\linewidth}
        \centering
        \begin{tikzpicture}[scale = .3, thick]
        \draw [fill] (4,8) circle [radius = 0.1]; 
        \draw [fill] (2,6) circle [radius = 0.1];
        \draw [fill] (0,4) circle [radius = 0.1];
        \draw [fill] (4,4) circle [radius = 0.1];
        \draw [fill] (2,2) circle [radius = 0.1];
        \draw [fill] (0,0) circle [radius = 0.1];
        \draw [fill] (-4,0) circle [radius = 0.1];
        \draw [fill] (8,0) circle [radius = 0.1];
        \draw [fill] (12,0) circle [radius = 0.1];
        
        \draw (4,8)--(2,6);
        \draw (2,6)--(0,4);
        \draw (2,6)--(4,4);
        \draw [dashed] (0,4)--(2,2);
        \draw [dashed] (4,4)--(2,2);
        
        \draw (2,2)--(0,0);
        \draw (0,4)--(-4,0);
        \draw (4,4)--(8,0);
        \draw (4,8)--(12,0);
        
        \draw (-4,0) node[below]{$2$};
        \draw (0,0) node[below]{$1$};
        \draw (8,0) node[below]{$4$};
        \draw (12,0) node[below]{$3$};
    \end{tikzpicture}%
    \end{subfigure}
    \begin{subfigure}[b]{0.3\linewidth}
        \centering
        \begin{tikzpicture}[scale = .5, thick]
        \draw [dashed] (2,2)--(4,2);
        \draw (4,2)--(4,4);
        \draw (4,4)--(2,4);
        \draw [dashed] (2,4)--(2,2);
        
        \draw (2,2)--(1,1);
        \draw (4,2)--(5,1);
        \draw (4,4)--(5,5);
        \draw (2,4)--(1,5);
        
        \draw (1,1) node[below]{$1$};
        \draw (5,1) node[below]{$2$};
        \draw (5,5) node[above]{$3$};
        \draw (1,5) node[above]{$4$};
        \end{tikzpicture}
    \end{subfigure}
    \begin{subfigure}[b]{0.3\linewidth}
        \centering
        \begin{tikzpicture}[scale = .5, thick]
        \draw [dashed] (2,2)--(4,2);
        \draw (4,2)--(4,4);
        \draw (4,4)--(2,4);
        \draw [dashed] (2,4)--(2,2);
        
        \draw (2,2)--(1,1);
        \draw (4,2)--(5,1);
        \draw (4,4)--(5,5);
        \draw (2,4)--(1,5);
        
        \draw (5,1)--(7,1);
        \draw (7,1)--(7,-1);
        \draw [dashed] (7,-1)--(5,-1);
        \draw [dashed] (5,-1)--(5,1);
        
        \draw (7,1)--(8,2);
        \draw (7,-1)--(8, -2);
        \draw (5,-1)--(4, -2);
        
        \draw (1,1) node[below]{$1$};
        \draw (4.5,1.5) node[left, below]{};
        \draw (5,5) node[above]{$3$};
        \draw (1,5) node[above]{$2$};
        \draw (8,2) node[above]{$6$};
        \draw (8,-2) node[below]{$5$};
        \draw (4,-2) node[below]{$4$};
        \end{tikzpicture}
    \end{subfigure}
    \caption{A four-leaf, level-1 phylogenetic network pictured on the left with all edges directed away from the root. Reticulation edges are drawn with dashed lines. In the center is the associated semi-directed network obtained by suppressing the root and undirecting all tree edges. The edges are implicitly assumed to be directed into the vertex adjacent to the leaf $1$, which is the reticulation vertex. On the right, a semi-directed level-1 network with $2$ biconnected components is pictured.}
    \label{fig:RootedToSemidirected}
\end{figure}

The definition of phylogenetic networks above is very general and allows for an extremely wide array of structures, making it difficult to analyze or study phylogenetic networks in general. This means that subclasses of networks are often studied instead. In this paper, we restrict to the subclass of \emph{level-1} phylogenetic networks, which are phylogenetic networks with at most 1 cycle in each 2-connected component. This class of networks is particularly nice since they can be formed by gluing trees and \emph{sunlet networks} together along leaves.

\begin{definition}
A \emph{$n$-sunlet network} is a semi-directed phylogenetic network with one reticulation vertex and whose underlying graph is obtained by adding a pendant edge to every vertex of a $n$-cycle. We denote with $\cs_n$ the $n$-sunlet network with reticulation vertex adjacent to the leaf 1 and the other leaves labeled circularly from 1. 
\end{definition}

Throughout this work we will denote the leaf edges of $\cs_n$, leading to leaves $1,\ldots, n$ by $e_1$,\ldots, $e_n$ respectively. The internal cycle edges of $\cs_n$ will be denoted by $e_{n+1}, \ldots, e_{2n}$, where for $i=1,\ldots n$, edge $e_{n+i}$ is the cycle edge between edges $e_i$ and $e_{i+1}$. In particular, edges $e_{n+1}$ and $e_{2n}$ are the two reticulation edges of $\cs_n$. See Figure {\ref{fig:4SunletAndTrees}}(A) for an example. The class of sunlet networks was first introduced in \cite{gross2018distinguishing} and was further studied in \cite{cummings2024invariants}. Here, we are interested in determining the \emph{invariants} of phylogenetic network models (defined below). Throughout this paper, we will focus primarily on sunlet networks since the invariants of any level-1 network under the CFN model (or indeed, any group-based model) can be determined if the invariants of all $n$-leaf sunlet networks and all phylogenetic trees are known \cite{gross2018distinguishing, cummings2024invariants}.

\subsection{Phylogenetic Markov models and the Cavender-Farris-Neyman Model}
In this subsection, we briefly describe phylogenetic Markov models with a particular focus on the Cavender-Farris-Neyman (CFN) model which is the focus of this paper.  We begin with phylogenetic Markov models on trees since they will be the main ingredient for constructing such models on phylogenetic networks. 

A $\kappa$-state Markov model on a $n$-leaf phylogenetic tree $\ct$ yields a distribution on all possible states that can be jointly observed at the leaves of $\ct$. 
These states are also called marginal characters and are often interpreted as being the possible columns representing a single site in aligned sequence data. For example, when $\kappa = 4$ and the state space is the nucleic acids $\{\rm{A,G,C,T}\}$, each joint state is a column that can appear in an alignment of DNA sequence data for $n$ taxa.

The distribution of joint states is produced by associating a $\kappa$-state random variable $X_v \in [\kappa] = \{1, \ldots, \kappa\}$ to each vertex $v\in V(\ct)$, and a $\kappa \times \kappa$ transition matrix $M^e$ to each directed edge $e=(u, v) \in E(\ct)$ such that $M_{i,j}^e = P(X_v = j | X_u = i)$. Lastly, associate a distribution $\pi$ of states to the root $\rho$ of $\ct$. 
Then the probability of observing a joint state $(x_1, \ldots, x_n) \in [\kappa]^n$ at the leaves of $\ct$ is given by marginalizing over the internal nodes of $\ct$. We can write down an expression for this in the following manner. Let $X(x_1,\ldots, x_n) \subset [\kappa]^{|V(\ct)|}$ be the set of all possible assignments of states at all vertices in $\ct$ that have states $x_1,\dots, x_n$ assigned to leaves $1,\ldots,n$ respectively. Let ${\rm Int}(\ct)$ denote the set of interior vertices of $\ct$, and for $x\in X(x_1,\ldots, x_n)$ and $u\in V(\ct)$, let $x_u$ denote the state assigned to vertex $u$. Then we have
\[
p_{x_1, x_2, \ldots, x_n} := P(X_1 = x_1, \ldots, X_n = x_n) ~= 
\sum_{x \in X(x_1,\ldots, x_n)}\pi_{x_\rho}\prod_{(u,v) \in E(\ct)}M_{x_u, x_v}^{(u,v)}, 
\]
where $X_1,\ldots,X_n$ are the random variables associated to the leaves of $\ct$. When the meaning is clear, we will drop the commas in $p_{x_1, x_2, \ldots, x_n}$ and simply write $p_{x_1 x_2 \ldots x_n}$. Observe that the joint distribution $p = (p_{x_1 x_2 \ldots x_n} ~|~ (x_1, \ldots, x_n) \in [\kappa]^n)$ of $(X_1, \ldots X_n)$ is given by polynomials in the entries of $\pi$ and $M^e$. In other words, the model can be thought of as the image of a  polynomial map
\begin{align*}
    \Psi_\ct ~:~ \Theta_\ct &\to \Delta_{\kappa^n-1} = \{p \in \Rr^{\kappa^n} ~|~ \sum_{(x_1, \ldots, x_n) \in [\kappa]^n} p_{x_1 x_2 \ldots x_n} = 1 ~\mathrm{and}~ p_{x_1 x_2 \ldots x_n } \geq 0 \}\\ 
             (\pi, M^e) &\mapsto 
             p = \Big(p_{x_1x_2\ldots x_n} =\sum_{x \in X(x_1,\ldots, x_n)}\pi_{x_\rho}\prod_{(u,v) \in E(\ct)}M_{x_u, x_v}^{(u,v)}\ |\ (x_1, \ldots, x_n) \in [\kappa]^n\Big),
\end{align*}
where $\Theta_\mathcal{T}$ is the \emph{stochastic parameter space} of the model, which consists of all possible tuples $(\pi, M^e ~|~ e \in E(\ct))$ of root distributions and transition matrices. The phylogenetic model $M_\ct = \image(\Psi_\ct)$ is the set of all possible distributions of joint states at the leaves of $\ct$ (often called leaf-patterns), and is the observable part of the model. Since $M_\ct$ is the image of a polynomial map, tools from commutative algebra and algebraic geometry can be used to study it, which is the main idea behind \emph{algebraic statistics} \cite{algstat}. In particular, algebraic tools can be used to determine invariants of the model, prove identifiability results, and can even yield algorithms for model selection, as we will see throughout the remainder of this paper. 

We may now formally define the CFN model.
\begin{definition}\label{def:cfn}
The \emph{Cavender-Farris-Neyman} (CFN) model on a phylogenetic tree $\ct$ is the 2-state Markov model for which the state space is $\{R,Y\}$, representing purines and pyrimidines, the distribution at the root vertex of $\ct$ is $\pi = (\frac{1}{2}, \frac{1}{2})$, and every transition matrix $M^e$ for $e\in E(\ct)$ has the form
$$M^e = \begin{pmatrix} \alpha_e & \beta_e \\ \beta_e & \alpha_e \end{pmatrix}$$
with $\alpha_e, \beta_e \in [0,1]$ such that $\alpha_e + \beta_e = 1$.
\end{definition}

Phylogenetic Markov models for trees naturally induce a model on phylogenetic networks in the following way. Let $\cn$ be a phylogenetic network with reticulation vertices $v_1, \dots, v_m$, and let $e_i^0$ and $e_i^1$ be the reticulation edges adjacent to $v_i$. As in the case for trees, associate a transition matrix to each edge of $\cn$ and a distribution of states at the root $\rho$. Independently at random, delete $e_i^0$ with probability $\lambda_i$ and otherwise delete $e_i^1$ and record which edge is deleted with a vector $\sigma \in \{0,1\}^m$ where $\sigma_i = 0$ indicates that edge $e_i^0$ was deleted. Each $\sigma$ corresponds to a different tree $\ct_\sigma$, which corresponds to the different possible phylogenetic trees for the species at the leaves of the network. The parameterization $\Psi_\cn$ is given by
\begin{equation}
\label{eq:networkParam}
    \Psi_\cn = \sum_{\sigma \in \{0,1\}^m}\left(\prod_{i=1}^m \lambda_i^{1 - \sigma_i}(1-\lambda_i)^{\sigma_i} \right)\Psi_{\ct_\sigma}
\end{equation}
where $\Psi_{\ct_\sigma}$ is the parameterization corresponding to the tree $\ct_\sigma$ with transition matrices inherited from the original network $\cn$. 
On first glance, the phylogenetic network model looks like the $2^m$-component mixture model on $\mathbf{T}$, where $\mathbf{T} = (\ct_\sigma \,|\, \sigma\in\{0,1\}^m)$ {\cite{rhodes2012identifiability}}. However, in the $2^m$-component mixture model case, the Markov parameters on the trees are independent, whereas in the phylogenetic network model above, parameters for corresponding edges present in multiple $\ct_\sigma$ (e.g. all non-reticulation edges) are identified. Thus, we can think of the model for the phylogenetic network $\cn$ as a submodel of the $2^m$-component mixture model on $\mathbf{T}$, in the sense that the set of distributions of states at the leaves for the phylogenetic network model is a subset of that for the mixture model.

When no restrictions are place on the root distribution or transition matrices, the model (for either a phylogenetic tree or phylogenetic networks) is called the \emph{general Markov model}. This has received extensive study for phylogenetic trees, but given the large numbers of parameters, it can be difficult to analyze \cite{allman2003invariants, allman2008gmm}. Often, simpler models of DNA evolution are specified by requiring that the transition matrices satisfy additional constraints. One well-studied family of models for phylogenetic trees is the following. 

\begin{definition}\label{def:groupbased}
Let $G$ be a finite abelian group of order $\kappa$ and $\ct$ a rooted binary tree. Then a \emph{group-based model} on $\ct$ is a phylogenetic Markov model on $\ct$ such that for each transition matrix $M^e$, there exists a function $f_e: G \to \Rr$ such that $M_{g,h}^e = f_e(g-h)$. 
\end{definition}

Here, we identify the group $G$ with the state space of the model. Several well-known phylogenetic models are group-based, such as the Jukes-Cantor, Kimura 2-Parameter, and Kimura 3-Parameter models and, of course, the CFN model, which is the main focus of this paper. Group-based models are particularly amenable to study from an algebraic perspective since they allow a linear change of coordinates, which vastly simplifies the parameterization of the model by simultaneously diagonalizing the transition matrices. This change of coordinates is called the discrete Fourier transform and was first applied to phylogenetic models in \cite{evans1993invariants, hendy1996complete}. It is applied to both the model parameters (i.e. the Markov matrices $M^e$ for each edge $e$) and the probability coordinates $p_{x_1x_2\ldots x_n}$.  After applying this change of coordinates, the transformed probability coordinates are typically called the \emph{Fourier coordinates} and are denoted by $q_{g_1 g_2 \ldots g_n}$ for $g_1, g_2,\ldots,g_n \in G$, where we now make the identification of $G$ with the state space of the model, so that $g_i = x_i$ for $i=1,\ldots, n$. A further simplification of the model is made with the observation that if $g_1 + \cdots + g_n \neq 0$, then $q_{g_1\ldots g_n} = 0$ (see e.g. \cite[Corollary~15.3.17]{algstat}). These coordinates are therefore ignored, and the dimension of the space in which the model lives can be reduced by a factor of $|G|$. 

In what follows, we give the parameterization for the CFN model after this change of coordinates and refer the reader to {\cite{steel2016phylogeny, algstat}} for more detailed information. First, we associate the state space $\{R,Y\}$ with the group $\zmodtwo = \{0,1\}$. The Fourier coordinates can be written explicitly in terms of the probability coordinates as follows:
\begin{equation}\label{eqn:fouriercoords}
    q_{g_1\dotsc g_n} = \sum_{(h_1,\dotsc,h_n) \in (\zmodtwo)^n} (-1)^{\sum_{i=1}^n g_i h_i}\ p_{h_1\ldots h_n}.
\end{equation}

In Fourier coordinates, the model associated to a phylogenetic tree $\ct$ is parameterized using the \emph{splits} of $\ct$. Each edge $e \in \ct$ has an associated split $A_e | B_e$, which is the partition of the leaf set of $\ct$ obtained by deleting the edge $e \in E(\ct)$.
To parameterize the CFN model in the Fourier coordinates, we define parameters $a_g^e$ for each $g \in \zmodtwo$ and edge $e$. 

\begin{remark}\label{remark:params}
The parameters $a^e_g$ can be recovered from the transition matrices $M^e$ in Definition \ref{def:cfn}. For the CFN model, these matrices take the form
\[
M^e = \begin{pmatrix} \alpha_e & \beta_e \\ \beta_e & \alpha_e \end{pmatrix},
\]
and the parameters $a_0^e$ and $a_1^e$ are $\alpha_e + \beta_e$ and $\alpha_e - \beta_e$, respectively. These are the eigenvalues of $M^e$ (see \cite[\S 3]{evans1993invariants}).
\end{remark}

As our goal is to use to use tools from algebraic geometry to find phylogenetic invariants, we consider a relaxation of the problem. If we were to keep all the restrictions on the parameters, then we would have that $a_0^e = 1$ and $a_1^e \in [-1,1]$; however, in what follows, we will allow $a_g^e$ to take any complex value; thereby, extending $\Psi_\ct$ to a complex polynomial map. This has the effect of allowing us to open the algebraic geometry toolbox while not affecting the invariant problem at all. We will therefore not rename the map and still call it $\Psi_\ct$. With this in mind, the \emph{parameterization map in Fourier coordinates} is given by 
\[
\Psi_{\ct}:\Cc^{2|E(\ct)|}\longrightarrow \Cc^{2^{n-1}}
\]
where
\[
(\Psi_\ct(a))_{g_1\ldots g_n} = \prod_{e \in E(\ct)} a_{\sum_{i \in A_e}g_i}^e.
\]
Here, for $a\in \Cc^{2|E(\ct)|}$, the coordinates of each factor of $\Cc$ are given by $a_0^e$ or $a_1^e$ for each $e \in E(\ct)$, and we identify $\Cc^{2^{n-1}}$ with the space of Fourier coordinates $q_{g_1\ldots g_n}$ such that $g_1 + \cdots + g_n = 0$. We may also write 
\begin{equation}
\label{eqn:TreeParam}
q_{g_1\ldots g_n} = 
\begin{cases}
\prod_{e \in E(\ct)} a_{\sum_{i \in A_e}g_i}^e & \mbox{ if } \sum_{i \in [n]}g_i = 0 \\ 
0 & \mbox{ otherwise.}
\end{cases}
\end{equation}

\begin{figure}
    \centering
    \begin{subfigure}[b]{0.3\linewidth}
        \centering
        \begin{tikzpicture}[scale = .5, thick]
        \draw [dashed] (2,2)--(4,2);
        \draw (4,2)--(4,4);
        \draw (4,4)--(2,4);
        \draw [dashed] (2,4)--(2,2);
        
        \draw (2,2)--(1,1);
        \draw (4,2)--(5,1);
        \draw (4,4)--(5,5);
        \draw (2,4)--(1,5);
        
        \draw (1,1) node[below]{$1$};
        \draw (5,1) node[below]{$2$};
        \draw (5,5) node[above]{$3$};
        \draw (1,5) node[above]{$4$};
        
        \draw (1,1) node[right]{$e_1$};
        \draw (5,1) node[left]{$e_2$};
        \draw (5,5) node[left]{$e_3$};
        \draw (1,5) node[right]{$e_4$};
        
        \draw (3,2) node[below]{$e_5$};
        \draw (4,3) node[right]{$e_6$};
        \draw (3,4) node[above]{$e_7$};
        \draw (2,3) node[left]{$e_8$};
        \end{tikzpicture}
        \caption{$\cs_4$}
    \end{subfigure}
    \begin{subfigure}[b]{0.3\linewidth}
        \begin{tikzpicture}[scale = .5, thick]
        \draw (0,0)--(2,2);
        \draw (2,2)--(4,2);
        \draw (4,2)--(6,0);
        \draw (2,2)--(1,3);
        \draw (4,2)--(5,3);
        
        \draw [fill] (0,0) circle [radius = .1];
        \draw [fill] (1,1) circle [radius = .1];
        \draw [fill] (2,2) circle [radius = .1];
        \draw [fill] (1,3) circle [radius = .1];
        \draw [fill] (4,2) circle [radius = .1];
        \draw [fill] (5,3) circle [radius = .1];
        \draw [fill] (5,1) circle [radius = .1];
        \draw [fill] (6,0) circle [radius = .1];
        
        \draw (0,0) node[below left] {$1$};
        \draw (1,3) node[above left] {$2$};
        \draw (5,3) node[above right] {$3$};
        \draw (6,0) node[below right] {$4$};
        
        \draw (.25,.25) node[right] {$e_1$};
        \draw (1.25,2.75) node[right] {$e_2$};
        \draw (4.75,2.75) node[left] {$e_3$};
        \draw (5.75,.25) node[left] {$e_4$};
        \draw (1.25,1.25) node[right] {$e_5$};
        \draw (3,2) node[below] {$e_6$};
        \draw (4.75,1.25) node[left] {$e_7$};
        \end{tikzpicture}
        \caption{$\ct_1$}
    \end{subfigure}
    \begin{subfigure}[b]{0.3\linewidth}
        \begin{tikzpicture}[scale = .5, thick]
        
        \draw (0,0)--(2,2);
        \draw (2,2)--(4,2);
        \draw (4,2)--(6,0);
        \draw (2,2)--(1,3);
        \draw (4,2)--(5,3);
        
        \draw [fill] (0,0) circle [radius = .1];
        \draw [fill] (1,1) circle [radius = .1];
        \draw [fill] (2,2) circle [radius = .1];
        \draw [fill] (1,3) circle [radius = .1];
        \draw [fill] (4,2) circle [radius = .1];
        \draw [fill] (5,3) circle [radius = .1];
        \draw [fill] (5,1) circle [radius = .1];
        \draw [fill] (6,0) circle [radius = .1];
        
        \draw (0,0) node[below left] {$1$};
        \draw (1,3) node[above left] {$4$};
        \draw (5,3) node[above right] {$3$};
        \draw (6,0) node[below right] {$2$};
        
        \draw (.25,.25) node[right] {$e_1$};
        \draw (1.25,2.75) node[right] {$e_4$};
        \draw (4.75,2.75) node[left] {$e_3$};
        \draw (5.75,.25) node[left] {$e_2$};
        \draw (1.25,1.25) node[right] {$e_8$};
        \draw (3,2) node[below] {$e_7$};
        \draw (4.75,1.25) node[left] {$e_6$};
        \end{tikzpicture}
        \caption{$\ct_2$}
    \end{subfigure}
    \caption{The four leaf sunlet network $\cs_4$ and the two trees $\ct_1$ and $\ct_2$ that are obtained by deleting the reticulation edges $e_8$ and $e_5$ respectively.}
    \label{fig:4SunletAndTrees}
\end{figure}

Since the discrete Fourier transform is linear, it naturally extends to phylogenetic networks by \cref{eq:networkParam}. We end this subsection with the following example illustrating this new parameterization for both trees and networks.

\begin{example}\label{ex:4sunlet}
Let $\cs_4$ be the 4-sunlet pictured in Figure \ref{fig:4SunletAndTrees}(A). The two trees $\ct_1$ and $\ct_2$, pictured in Figure \ref{fig:4SunletAndTrees}(B) and (C), are obtained from $\cs_4$ by deleting the reticulation edges $e_8$ and $e_5$ respectively. 
By \cref{eq:networkParam}, the parameterization is given by 
\[\Psi_{\cs_4} = \lambda\Psi_{\ct_1} + (1-\lambda)\Psi_{\ct_2}.\]
where in the Fourier coordinates $\Psi_{\ct_1}$ and $\Psi_{\ct_2}$ are given by \cref{eqn:TreeParam}. Explicitly, we have
\[
q_{g_1 g_2 g_3 g_4} = \begin{cases}
\lambda a_{g_1}^1 a_{g_2}^2 a_{g_3}^3 a_{g_4}^4 a_{g_1}^5 a_{g_1 + g_2}^6 a_{g_4}^7 +
(1-\lambda)a_{g_1}^1 a_{g_2}^2 a_{g_3}^3 a_{g_4}^4 a_{g_2}^6 a_{g_1 + g_4}^7 a_{g_1}^8
& \mbox{ if } \sum_{i \in [4]}g_i = 0 \\ 
0 & \mbox{ otherwise.}
\end{cases}
\]
Here, the first term comes from the parameterization $\Psi_{\ct_1}$, and the second term comes from $\Psi_{\ct_2}$. Observe that for all $g$ in $G$, the parameters $a_g^5$ and $a_g^8$ only occur in the first and second terms, respectively, since they correspond to the deleted reticulation edges. This means we can make the substitution $\lambda a_g^5 \mapsto a_g^5$ and $(1-\lambda) a_g^8 \mapsto a_g^8 $ without changing the parameterization of the model as was noted in \cite{gross2018distinguishing}. So, for example, after this change, the coordinate $q_{1001}$ is given by
\[
q_{1001} = a_{1}^1 a_{0}^2 a_{0}^3 a_{1}^4 a_{1}^5 a_{1}^6 a_{1}^7 +
a_{1}^1 a_{0}^2 a_{0}^3 a_{1}^4 a_{0}^6 a_{0}^7 a_{1}^8 = a_{1}^1 a_{0}^2 a_{0}^3 a_{1}^4 (a_{1}^5 a_{1}^6 a_{1}^7 + a_{0}^6 a_{0}^7 a_{1}^8). 
\]
\end{example}

\subsection{Phylogenetic Invariants and Algebraic Statistics}\label{subsec: phylogenetic invariants}
In this subsection we give a brief overview of some algebraic structures which we will need to describe the invariants for the CFN model on phylogenetic trees \cite{sturmfels2005toric} and the quadratic invariants for the CFN model on phylogenetic networks that were found in \cite{cummings2024invariants}. Both of these characterizations will be key in the construction of our alternative parameterization of the CFN sunlet model in Section \ref{sec:pfaffian} and our description of all of the invariants in this new coordinate system. For additional information on the algebraic concepts discussed below, we refer the reader to \cite{coxlittleoshea, algstat}. 

As discussed in the previous section, all phylogenetic Markov models on a tree or a network can be described in terms of the image of a polynomial map $\varphi$. A fundamental problem concerning any such phylogenetic model is to determine the \emph{ideal of phylogenetic invariants}, also called the \emph{vanishing ideal} of the model. 

\begin{definition}
\label{defn:VanishingIdeal}
Let $S \subseteq \Cc^m$. The \emph{vanishing ideal} of $S$
is the set of multivariate polynomials
\[
\ci(S) := \{f \in \Cc[x_1, \ldots, x_m] ~|~ f(s) = 0 ~\mbox{for all}~ s \in S\}.
\]
\end{definition}

For instance, if $\varphi = \Psi_\ct$ is the parameterization map  in Fourier coordinates of the CFN model on a tree $\ct$, then the ideal of phylogenetic invariants is $I_\ct = \ci(\image(\Psi_\ct))$. This means that phylogenetic invariants are the multivariate polynomials in the Fourier coordinates $q_{g_1 \ldots g_n}$ that evaluate to zero on every point in the image of the map $\Psi_\ct$. Since $q_{g_1\ldots g_n} = 0$ when $g_1 + \cdots + g_n \neq 0$, we think of the ideal $I_{\ct}$ (and thus also the invariants) as living in the polynomial ring
\[
Q_n = \mathbb{C}[q_{g_1\ldots g_n}\,|\, g_1 + \cdots + g_n = 0].
\]
We can similarly define the ideal of invariants for a network as $I_\cn = \ci(\image(\Psi_\cn))$. 

Every ideal $I$ of a polynomial ring $\Cc[x_1, \ldots, x_m]$ has an associated \emph{affine algebraic variety} which is
\[
\VV(I) = \{s \in \Cc^m ~|~ f(s) = 0 ~\mbox{for all}~ f \in I\}. 
\]
When $I = \mathcal{I}(S)$ is the vanishing ideal of a set $S$, then $\VV(I)$ is called the \emph{Zariski closure} of $S$. As a set, it contains $S$, but may be larger. The \emph{phylogenetic variety} of a tree $\ct$ is $\VV(I_\ct)$, and we can define the phylogenetic variety of a network similarly. For brevity, we denote $\VV(I_\ct)$ by $\VV_\ct$. 

The Zariski closure of the image of $\Psi_\ct$ contains the Fourier transform of the model, $M_\ct$, but they are not equal. By viewing $\Psi_{\ct}$ as a complex polynomial map we are allowing parameters to take any values in $\mathbb{C} $ (rather than probabilities between 0 and 1), and forgetting the stochastic restrictions imposed by the Markov process (such as that the sum of each row must equal 1). Thus, in a sense, $\VV_\ct$ is much larger than the original model. In particular, for a point $v \in \VV_\ct$, it is not necessarily the case that its inverse image under the Fourier transformation is a probability distribution, and therefore lies in the probability simplex $\Delta_{\kappa^n-1}$. We call the Fourier transform of $M_\ct$ in $\VV_\ct$ the \emph{stochastic} part of the variety. Observe that by equation (\ref{eqn:fouriercoords}) we have
\[ 
q_{0\ldots 0} =  \sum_{(h_1,\ldots, h_n) \in (\mathbb{Z}/2\mathbb{Z})^n}p_{h_1,\ldots, h_n},
\]
so the stochastic part of the variety is contained in the open set given by $q_{0\ldots 0} \neq 0$. In fact, this is true more generally for any abelian group $G$.

While we have defined the phylogenetic variety as an affine variety, it is common to think of these as \emph{projective varieties}. Indeed, the parameterization map $\Psi_{\ct}$ is homogeneous (in the sense of total degree). Thus, if $p$ is a point in the stochastic part of $\VV_{\ct}$ representing a probability distribution, then $\lambda \cdot p$ will lie in the phylogenetic variety for any $\lambda \in \Cc \setminus\{0\}$; however, in projective space, these points are identified. Therefore, when we refer to the phylogenetic variety from here on out, we mean the Zariski closure of the image of $\Psi_\ct$ in $\Pp^{2^{n-1}-1}$. This is desirable, but it has the side effect of adding points at infinity, i.e., there will be points in the projective variety where the sum of the probability coordinates is 0, and these cannot correspond to a probability distribution. As such, we define \emph{the statistically relevant affine open patch} of the projective variety $\VV_\ct$ as $\VV_\ct \setminus \VV(\sum_{(g_1,\ldots,g_n)\in\mathbb{Z}/2\mathbb{Z}} p_{g_1\ldots g_n}$).


Recall that points in (complex) projective space $\Pp^n$ are given by homogeneous coordinates $[x_0:x_1:\ldots:x_n]$ where each $x_i\in \Cc$ and for all $\lambda\in\Cc\setminus\{0\}$ we have $[x_0:x_1:\ldots:x_n] = [\lambda x_0:\lambda x_1:\ldots: \lambda x_n]$. The reader may consult, for example \cite{coxlittleoshea}, for more details.

\begin{remark}\label{rem: group-based models are projective}
   The CFN parameterization of the $n$-sunlet $\cs_n$ can be thought of as the image of a map of the form
    \[
        \large(\prod_{e \in E(\cs_n)\setminus \{e_{n+1}, e_{2n}\}} \Pp^1 \large)\times \Pp^3 \to \Pp^{2^{n-1} - 1},
    \]
   where $e_{n+1}$ and $e_{2n}$ are the reticulation edges, the homogeneous coordinates of $\Pp^3$ are $[a_{n+1}^0:a_{n+1}^1:a_{2n}^0:a_{2n}^1]$, the homogeneous coordinates of each $\Pp^1$ are $[a_0^e : a_1^e]$, and the homogeneous coordinates of $\Pp^{2^{n-1} - 1}$ are the Fourier coordinates $[q_{0\ldots 0}:\ldots : q_{1\ldots 1}]$.
   Thus we may think of $\VV_\cn$ as a projective variety in $\Pp^{2^{n-1} - 1}$. In \cref{sec:pfaffian}, we will consider the statistically relevant affine open patch of this variety where $\sum_{g \in (\zmodtwo)^n} p_g = q_{00\dots0} = 1$. 
   
\end{remark}

For a phylogenetic tree $\ct$ under the CFN model, $I_{\ct}$ has a particularly nice description in terms of the minors of certain matrices. Let $\ct$ be a phylogenetic tree on $n$ leaves and $e$ an edge of $\ct$. In Fourier coordinates, $I_{\ct}$, the ideal of the phylogenetic invariants on $\ct$, is contained in the polynomial ring $Q=\Cc[q_{g_1\ldots g_n}\,|\,g_1 + \cdots + g_n = 0]$. Let $A|B$ be the split induced by the edge $e$, and write $A=\{A_1,\ldots, A_m\}\subset [n]$ and $B = \{B_1\ldots,B_{n-m}\}\subset [n]$, where $A_i < A_j$ for $i <j$ and $B_i < B_j$ for $i <j$.
The indeterminates of $Q$ can be partitioned into two disjoint sets $\{q_{\mathbf{g}}\,|\,\sum_{j\in A}g_j=0\in \mathbb{Z}/2\mathbb{Z}\}$ and $\{q_{\mathbf{g}}\,|\,\sum_{j\in A}g_j=1\in \mathbb{Z}/2\mathbb{Z}\}$, where we write $\mathbf{g}=(g_1,\ldots,g_n)$. These sets fit into two matrices, $M_0^e$ and $M_1^e$ of dimension $2^{|A|-1}\times 2^{|B|-1}$, whose rows and columns are indexed by the substrings $\mathbf{g}_A = (g_{A_1},\ldots, g_{A_m})$ and $\mathbf{g}_B = (g_{B_1},\ldots, g_{B_{n-m}})$ of $(g_1,\ldots, g_n)$, respectively. Using this notation, the ideal $I_{\ct}$ for any tree $\ct$ under the CFN model is completely characterized by the following theorem from \cite{sturmfels2005toric}. 

\begin{theorem}\cite{sturmfels2005toric}
    Let $\ct$ be a phylogenetic tree and $I_\ct$ be the ideal of phylogenetic invariants of $\ct$ under the CFN model. Then $I_\ct$ is generated by the $2 \times 2$ minors of all matrices of the form $M_0^{A_e|B_e}$ and $M_1^{A_e|B_e}$ where $e$ is an internal edge. 
\end{theorem}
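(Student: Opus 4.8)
The plan is to exploit the fact that, in the Fourier coordinates, the CFN model on a tree is \emph{toric}: by the tree parameterization of \eqref{eqn:TreeParam} the map $\varphi_\ct$ is monomial, sending $[a_0^e:a_1^e]_{e\in E(\ct)}$ to the point with $q_{g_1\cdots g_n} = \prod_{e} a^e_{\sum_{i\in A_e} g_i}$ when $\sum_i g_i = 0$ (and $q_{g_1\cdots g_n}=0$ otherwise). Hence $I_\ct$ is a prime binomial ideal, namely the lattice ideal of the kernel of the exponent matrix, and it suffices to exhibit a binomial generating set. The easy containment $\langle\text{flattening minors}\rangle\subseteq I_\ct$ is a separation-of-variables observation: for a fixed internal edge $e$ with split $A_e\mid B_e$, each entry $q_{h\mid k}$ of $M_s^{A_e\mid B_e}$ factors on the model as (a monomial in the $a^f$ for $f$ on the $A_e$-side, depending only on $h$) times $a^e_s$ times (a monomial in the $a^f$ for $f$ on the $B_e$-side, depending only on $k$); thus $M_s^{A_e\mid B_e}$ has rank at most $1$ on the model and all its $2\times 2$ minors vanish there. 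Pendant edges contribute nothing, since one side of such a flattening is a single leaf and the matrix has a single row, consistent with the statement restricting to internal edges.

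For the reverse containment I would induct on the number of leaves $n$, equivalently on the number of internal edges. When $n\le 3$ there are no internal edges and $I_\ct=(0)$: the tripod model is a dominant rational map from $(\Pp^1)^3$ to $\Pp^3$ between varieties of the same dimension, so there is nothing to prove. For the inductive step, choose a cherry $\{i,j\}$ with common neighbor $v$, and let $e$ be the unique internal edge at $v$, so $A_e=\{i,j\}$. For each $s$ the flattening $M_s^{\{i,j\}\mid\overline{\{i,j\}}}$ is a $2\times 2^{n-3}$ matrix, and its $2\times 2$ minors assert that the directions of the $2$-vectors $(q_{00\mid k},q_{11\mid k})$ and $(q_{01\mid k},q_{10\mid k})$ are independent of $k$. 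Collapsing the cherry — delete $i$, $j$, $v$ and the two pendant edges, and reattach a new leaf $\ast$ via $e$ — produces a tree $\ct'$ on $n-1$ leaves whose internal edges are exactly the internal edges of $\ct$ other than $e$. One then recognizes $\VV(I_\ct)$ as the \emph{toric fiber product} of $\VV(I_{\ct'})$ with the tripod model on $\{i,j,\ast\}$, glued along the leaf $\ast$, whose state space has size $2$.

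The technical heart, and the step I expect to be the main obstacle, is invoking Sullivant's toric fiber product theorem to conclude that $I_\ct$ is generated by (i) the pullbacks of a generating set of $I_{\ct'}$ and (ii) the quadratic linking relations across the gluing, which in this setup are precisely the $2\times 2$ minors of $M_0^{\{i,j\}\mid\overline{\{i,j\}}}$ and $M_1^{\{i,j\}\mid\overline{\{i,j\}}}$. This requires verifying the compatibility hypotheses of that theorem (roughly, that the multigradings induced by the gluing leaf match up and that no higher-degree linking relations are needed); here this holds because the tripod factor has zero ideal and the grading is the coarse one coming from a single $2$-element group, but making it precise is the real work. By induction the pullbacks in (i) are the $2\times 2$ minors of flattenings along the internal edges of $\ct$ distinct from $e$, and together with (ii) these give exactly the claimed generating set, closing the induction. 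An essentially equivalent, more self-contained route replaces the toric-fiber-product black box with a direct Markov-basis argument: one shows that any two monomials of equal degree mapping to the same point on the model are connected by the swap moves $\{(a\mid c),(b\mid d)\}\leftrightarrow\{(a\mid d),(b\mid c)\}$ coming from the flattening minors, again by peeling off a cherry, and the combinatorial core is the same.
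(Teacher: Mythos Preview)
The paper does not supply a proof of this theorem at all: it is simply quoted from \cite{sturmfels2005toric} as background, and the surrounding text passes immediately to an illustrative example. So there is nothing in the paper to compare your proposal against.

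That said, your outline is essentially the standard argument from the cited source. The toric structure of the CFN tree model in Fourier coordinates, the rank-$1$ flattening observation for the easy containment, and the inductive cherry-contraction realizing $I_\ct$ as a toric fiber product of $I_{\ct'}$ with the tripod ideal are exactly the ingredients of the Sturmfels--Sullivant proof; the relevant machinery is precisely \cite{sullivant2007toric}, which the present paper also cites. Your identification of the ``main obstacle'' is accurate: the only nontrivial step is checking that the gluing grading is linearly independent (here it is, since the fiber over each $\zmodtwo$-state of the gluing leaf is a single coordinate on the tripod side), so that the toric fiber product hypotheses apply and no higher-degree linking relations arise. The alternative Markov-basis phrasing you mention is likewise standard and equivalent. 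There is no gap in your plan; it just happens that the paper under review treats this result as a black box rather than reproving it.
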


The following example illustrates this construction for a small tree. 

\begin{example}
\label{ex:cfnTreeIdeal}
Let $\ct$ be the unrooted binary tree with 4 leaves whose only nontrivial split is $A|B = 12 | 34$. Then $I_\ct$ is generated by the $2 \times 2$ minors of the following matrices
\[
M_0^{A|B} = 
\begin{blockarray}{ccc}
      & 00 & 11 \\
      \begin{block}{c(cc)}
      00 & q_{0000} & q_{0011}\\
      11 & q_{1100} & q_{1111}\\
      \end{block}
\end{blockarray}
 ~~~~~~~~
M_1^{A|B} = 
\begin{blockarray}{ccc}
      & 01 & 10 \\
      \begin{block}{c(cc)}
      01 & q_{0101} & q_{0110}\\
      10 & q_{1001} & q_{1010}\\
      \end{block}
\end{blockarray}.
\]
So the ideal of phylogenetic invariants for $\ct$ is $I_\ct = \langle q_{0000}q_{1111}-q_{0011}q_{1100},~ q_{0101}q_{1010} - q_{0110}q_{1001} \rangle$. 
\end{example}

In \cite{cummings2024invariants}, the authors show that the invariants of any level-1 phylogenetic network under a group-based model can be determined from the invariants of phylogenetic trees and sunlet networks using the toric fiber product \cite{sullivant2007toric}. They then give a complete description of the quadratic invariants of the model and conjecture that the ideal $I_{\cs_n}$, where $\cs_n$ is the sunlet network on $n$ leaves, is generated by these quadratics. One key observation from \cite{cummings2024invariants} is that $I_{\cs_n}$ is homogeneous in the multigrading defined by
\begin{equation}\label{eqn:multigrading}
    \deg(q_{\mathbf{g}}) = (1,\mathbf{g}) \in \Zz^{n+1},
\end{equation}
where $\mathbf{g}=(g_1,\ldots, g_n)$ is treated as a $0/1$ integer vector instead of as a vector with $\zmodtwo$ entries. In the next section, we will provide an alternate perspective on the ideal $I_{\cs_n}$, which lets us describe the invariants of this model using determinantal constraints. 

In \cref{sec:pfaffian}, it will be helpful for us to know the dimensions of the phylogenetic varieties of sunlet networks $\cn$ under the CFN model. For all group-based models on level-1 phylogenetic networks, the dimension of the phylogenetic variety is studied in \cite{gross2023dimensions}. We will use the following.

\begin{proposition}\cite[Proposition~14]{gross2023dimensions}\label{prop:sunletDimension}
Let $\cs_n$ be an $n$-sunlet network with $n\geq 5$ under the CFN model. Then, the dimension of the projective variety $\VV(I_{\cs_n}) \subseteq \Pp^{2^{n-1}-1}$ is $2n - 1$.
\end{proposition}

We end this section with a discussion about the discrete Fourier transform. As it turns out, this is more than just an algebraic ``trick", but rather, the discrete Fourier transform allows us to view the variety in terms of moments rather than through probability distributions. Similar analyses can also be found in \cite[\S 3]{evans1993invariants} and \cite[\S 2]{sturmfels2005toric}.

Recall that
\[
    q_{g_1\dotsc g_n} = \sum_{(h_1,\dotsc,h_n) \in (\zmodtwo)^n} (-1)^{\sum_{i=1}^n g_i h_i}\ p_{h_1\ldots h_n}.
\]
Our first observation is that, for any phylogenetic network, the Fourier coordinate $q_{g_1,\dotsc,g_n}$ is an expected value. To see this, consider the character $\chi : \zmodtwo \to \Rr$,
\[
     \chi(x) = \begin{cases}
        -1 &\text{if } x = 1 \\
        1  &\text{if } x = 0
    \end{cases},
\]
and define $Y_i = \chi(X_i)$ where $X_1,\dotsc,X_n$ are the observable random variables at the leaves of a phylogenetic tree. Then, the following theorem holds. This theorem follows directly from the discussion in  \cite[\S 3]{evans1993invariants}; however, we present a proof here for completeness.

\begin{theorem}
\label{thm: fourier coordinates are expected values}
    For any distribution of $n$ binary random variables taking values in $\{-1,1\}$, the Fourier coordinates are moments. With the notation above,
    \[
        q_{g_1,\dotsc,g_n} = \mathbb{E}\left[ \prod_{g_i = 1} Y_i \right].
    \]
    Moreover, for the CFN model, all the odd moments vanish, i.e., if $\sum_i g_i = 1$, then $q_{g_1,\dotsc,g_n} = 0$.
\end{theorem}
\begin{proof}
    For $g,h \in (\zmodtwo)^n$, we claim that 
    \[
        (-1)^{\sum_{i=1}^n g_i h_i} = \prod_{g_i = 1} \chi(h_i).
    \]
    Indeed, if we rewrite the product on the left-hand side as $\prod_{i=1}^n (-1)^{g_i h_i}$, we see that the $i^\text{th}$ factor is $-1$ if and only if $g_i = h_i = 1$. There are precisely the same number of factors of $-1$ on the right-hand side. Thus, we arrive at the desired formula for the Fourier coordinate $q_{g_1,\dotsc,g_n}$.
    \begin{align*}
        \mathbb{E}\left[\prod_{g_i = 1} Y_i \right] &= \sum_{h \in (\zmodtwo)^n} \left(\prod_{g_i = 1} \chi(h_i)\right) p_{h_1,\dotsc,h_n} \\
        &= \sum_{h \in (\zmodtwo)^n} (-1)^{\sum_{i=1}^n g_i h_i} p_{h_1,\dotsc,h_n} \\
        &= q_{g_1,\dotsc,g_n}
    \end{align*}

    The second statement can be deduced from the fact that $p_{g_1,\dotsc,g_n} = p_{g_1 + 1,\dotsc,g_n + 1}$. Consider a tree $\ct$; the argument for networks follows easily from the tree case and \cref{eq:networkParam}. In general, we have that
    \begin{equation}
    \label{eq:prob}
    p_{g_1,\dotsc,g_n} = \sum_{x \in X(g_1,\ldots, g_n)}\pi_{x_\rho}\prod_{(u,v) \in E(\ct)}M_{x_u, x_v}^{(u,v)}.
    \end{equation}
    However, since we are restricting to the CFN model, we know that $\pi_{x_\rho} = 1/2$ and there are functions $f_e:\zmodtwo \to \Rr$ for all $e = (u,v) \in E(\ct)$ such that $M_{x_u,x_v}^e = f_e(x_u - x_v)$. Specifically, $f_e$ is defined as follows.
        \[f_e(x) = \begin{cases} \alpha_e &\text{if }x = 0 \\ \beta_e &\text{if } x = 1\end{cases}\]
    Thus, we can rewrite \cref{eq:prob} as
    \begin{equation}
    \label{eq:probFourier}
        p_{g_1,\dotsc,g_n} = \frac{1}{2}\sum_{x \in X(g_1,\ldots, g_n)}\prod_{e =(u,v) \in E(\ct)}f_e(x_u - x_v).
    \end{equation}
    The key observation is that there is an involution, $\sigma : X(g_1,\dotsc,g_n) \to X(g_1,\dotsc,g_n)$ where $\sigma(x)$ has the same values at the leaves as $x$, but the value at each internal node is switched from 0 to 1 or vice versa. This has no affect on the right hand side of \cref{eq:probFourier} other than reordering the sum. Now, suppose $j$ is a leaf and its parent is $u_j$. In the product, we have the factor
    $f_e(\sigma(x_{u_j}) - g_j) = f_e(x_{u_j} - (g_j + 1))$. Following definitions, we see that the sum could have been taken over $X(g_1+1,\dotsc,g_n+1)$ instead. It follows that $p_{g_1,\dotsc,g_n} = p_{g_1+1,\dotsc,g_n+1}$.

    Now, we can use this fact to prove that if $\#\{i ~:~ g_i = 1\}$ is odd then $q_{g_1,\dotsc,g_n} = 0$. Indeed, by reindexing the sum in the definition of the discrete Fourier transform, we have
    \[
        q_{g_1,\dotsc,g_n} = \sum_{h \in (\zmodtwo)^n} (-1)^{\sum_{i=1}^n g_ih_i}p_h = \sum_{h + \mathbf{1} \in (\zmodtwo)^n} (-1)^{\sum_{i=1}^n g_i(h_i+1)}p_{h + \mathbf{1}}.
    \]
    Since $\#\{i ~:~ g_i = 1\}$ is odd, we see that $(-1)^{\sum_{i=1}^n g_i(h_i + 1)} = -(-1)^{\sum_{i=1}^n g_i h_i}$. Making this simplification on the far right hand side and the substitution $p_h = p_{h+\mathbf{1}}$, we see that $q_{g_1,\dotsc,g_n}$ must be identically 0.
   
\end{proof}

\subsection{Pfaffians of Skew-Symmetric Matrices}
In this subsection, we briefly remind the reader of some properties of the Pfaffian of a skew-symmetric matrix, which will be crucial in \cref{sec:pfaffian}. 

\begin{definition}\label{Pfaffian of a A}
Let $S_n$ be the symmetric group of order $n$, and $A = (a_{ij})$ an $n\times n$ skew-symmetric matrix where $n=2m$ for some positive integer $m$. The Pfaffian of $A$, denoted $\text{Pf}(A)$, is defined as 
\begin{equation}
    \text{Pf}(A)=\frac{1}{2^mm!}\sum_{\sigma\in S_n}\sign(\sigma)\prod_{1\leq j\leq m}a_{\sigma(2j-1),\sigma(2j)}.
\end{equation}
\end{definition}

The following proposition, originally due to Cayley in 1849 \cite{Cayley1849}, gives us two ways to compute the Pfaffian of a skew-symmetric matrix, the second of which will be of use to us in \cref{sec:pfaffian}.

\begin{proposition}
\label{prop: pfaffian laplace expansion}
    Let $A = (a_{ij})$ be a skew-symmetric matrix. Then
    \[(\Pf (A))^2=\det(A).\]
    Moreover, there is also a Laplace-like expansion for the Pfaffian. If $A$ is a $2m\times 2m$ skew-symmetric matrix, let $A_{ij}$ be the submatrix of $A$ where we delete both the $i^\text{th}$ and $j^\text{th}$ rows and columns of $A$. Then 
\begin{equation}\label{eqn:laplace}
    \mathrm{Pf}(A) = \sum_{j=2}^{2m} (-1)^j a_{1j}\mathrm{Pf}(A_{1j}).
\end{equation}
\end{proposition}

\section{The Pfaffian Structure of CFN Network Ideals} \label{sec:3}
In this section, we study the vanishing ideal of the CFN sunlet network model. In \Cref{sec:pfaffian}, we find the vanishing ideal of the CFN sunlet network model in a new coordinate system, and prove the main theorem from the introduction. In \Cref{sec:quadratic generation}, we study the vanishing ideal of the model in Fourier coordinates, and partially resolve a conjecture from \cite{cummings2024invariants}. Throughout, we assume that $n\geq 4$, unless otherwise stated, and we denote the space of $n\times n$ skew-symmetric matrices with entries in $\Cc$ by $\Skew_n(\Cc)$.

\subsection{Proof of the Main Theorem}
\label{sec:pfaffian}
In this subsection, we show that the statistically relevant affine open patch of the CFN sunlet network model factors through the space of skew-symmetric matrices via the Pfaffian. We then use this to prove the following theorem, which is the main theorem of our paper.

\begin{theorem}\label{thm:main}
   Let $\VV_{\cs_n} \subseteq \Pp^{2^{n-1} - 1}$ be the \emph{projective} phylogenetic variety associated to the CFN model on a $n$-leaf sunlet network for $n \geq 4$. 
   Denote the affine open patch of $\VV_{\cs_n}$ where the sum of the coordinates is 1 by $U_n$.
    Then:
    \begin{enumerate}
        \item [(a)] There is an embedding $\pi : U_n \to \Skew_n(\Cc)$.
        \item [(b)] Let $\Omega$ be a $n\times n$ skew-symmetric matrix with variable entries $x_{ij}$. In these coordinates, $U_n$ is cut out by the following minors of $\Omega$:
        \begin{enumerate}
            \item [(i)] $\det(\Omega_{\{i,j\},\{k,\ell\}})$ where $1 < i < j < k < \ell \leq n$ and 
            \item [(ii)] $\det(\Omega_{\{1,i_2,i_3\}, \{j_1,j_2,j_3\}})$ where $1 < i_2 < j_1 < j_2 < j_3 < i_3 \leq n$,
        \end{enumerate}
        where $\Omega_{S,T}$ is the submatrix of $\Sigma$ whose rows and columns are indexed by the elements of $S$ and $T$, respectively.
    \end{enumerate} 
\end{theorem}

To prove this theorem, we first construct the map $\pi: U_n \to \Skew_n(\Cc)$ and show that it is an embedding. We then show that the minors described in Theorem {\ref{thm:main}} parts (b)(i) and (b)(ii) all lie in the vanishing ideal $\ci(\pi(U_n))$ and form a Gr\"obner basis for the ideal that they generate. Note that when $n=5$ there is only a single $2\times 2$ minor in part (b)(i) and no minors from part (b)(ii), and when $n=4$ there are no minors from neither part (b)(i) nor part (b)(ii). Lastly, we show that this ideal is prime and of the correct dimension, and thus, the minors also form a Gr\"obner basis for $\ci(\pi(U_n))$ which completes the proof. Before presenting the proof, we begin with a motivating example for $n=4$.

\begin{example}
\label{ex: extended example}
    Let $\VV_{\cs_4} \subseteq \Pp^7$ be the CFN phylogenetic variety of the 4-leaf sunlet network. For 4 leaves we have $2^{3} = 8$ non-zero Fourier coordinates, so $\VV_{\cs_4}$ lives in $\Pp^7$ (see Remark {\ref{rem: group-based models are projective}}). The statistically relevant affine open patch $U_4$ of $\VV_{\cs_4}$ is $U_4 = \VV_{\cs_4} \setminus \VV(q_{0000}) \subseteq \Cc^7$, i.e. we set $q_{0000} = 1$.
    We claim that the map $\pi : U_4 \to \Skew_4(\Cc)$ defined as follows is an embedding:
    \begin{equation}\label{eqn:u4example}
        (q_{1100}, q_{1010}, q_{1001}, q_{0110},q_{0101},q_{0011},q_{1111}) \mapsto \begin{pmatrix}
            0 & q_{1100} & q_{1010} & q_{1001} \\
            -q_{1100} & 0 & q_{0110} & q_{0101} \\
            -q_{1010} & -q_{0110} & 0 & q_{0011} \\
            -q_{1001} & -q_{0101} & -q_{0011} & 0
        \end{pmatrix}.
    \end{equation}
    The map $\pi$ is a projection onto those Fourier coordinates consisting of exactly two $1$'s, so it remains for us to show that given a $4\times 4$ skew-symmetric matrix in the image of $\pi$, we can recover $q_{1111}$ uniquely. If $q_{1111}$ were arbitrary, then, of course, this would be impossible; however, $\VV_{\cs_4}$ is defined by the quadric below:
    \begin{equation}\label{eqn:S4relation}
        \VV_{\cs_4} = \VV(q_{0000}q_{1111} - q_{0011}q_{1100} + q_{0101}q_{1010} - q_{0110}q_{1001}).
    \end{equation}
    Since $q_{0000} = 1$ on $U_4$, we can solve for $q_{1111}$ in terms of the remaining variables on this open patch.
    It follows that $\pi$, in this case, is an isomorphism whose inverse is given below.
    \[
        \begin{pmatrix}
        0 & x_{12} & x_{13} & x_{14} \\
        -x_{12} & 0 & x_{23} & x_{24} \\
        -x_{13} & -x_{23} & 0 & x_{34} \\
        -x_{14} & -x_{24} & -x_{34} & 0
        \end{pmatrix}
        \mapsto 
        (x_{12}, x_{13}, x_{14}, x_{23}, x_{24}, x_{34}, x_{12}x_{34} - x_{13}x_{24} + x_{14}x_{23})
    \]
    Observe that $x_{ij}$ corresponds to the $q_{g_1g_2g_3g_4}$ for which $g_i = g_j = 1$ and $g_k = 0$ for $k\neq i,j$. The inverse $\pi^{-1} : \Skew_4(\Cc) \to U_4$ can be written compactly using Pfaffians.
    Indeed, let $\Omega  = (x_{ij}) \in \Skew_4(\Cc)$. For a subset $S \subseteq [4]$ of even cardinality, we define $\Omega_S$ as the matrix whose rows and columns are indexed by $S$, and we can compute their Pfaffians.
    \begin{align*}
        \mathrm{Pf}(\Omega_\emptyset) &= 1 &  
        \mathrm{Pf}(\Omega_{\{1,2\}}) &= x_{12} \\
        \mathrm{Pf}(\Omega_{\{1,3\}}) &= x_{13} &  
        \mathrm{Pf}(\Omega_{\{1,4\}}) &= x_{14} \\
        \mathrm{Pf}(\Omega_{\{2,3\}}) &= x_{23} & 
        \mathrm{Pf}(\Omega_{\{2,4\}}) &= x_{24} \\
        \mathrm{Pf}(\Omega_{\{3,4\}}) &= x_{34} & 
        \mathrm{Pf}(\Omega) &=\mathrm{Pf}(\Omega_{[4]})= x_{12}x_{34} - x_{13}x_{24} + x_{14}x_{23}
    \end{align*}
    The polynomials above satisfy the homogeneous relation below:
    \[
        \mathrm{Pf}(\Omega_\emptyset)\mathrm{Pf}(\Omega) - \mathrm{Pf}(\Omega_{\{1,2\}})\mathrm{Pf}(\Omega_{\{3,4\}}) + \mathrm{Pf}(\Omega_{\{1,3\}})\mathrm{Pf}(\Omega_{\{2,4\}}) - \mathrm{Pf}(\Omega_{\{1,4\}})\mathrm{Pf}(\Omega_{\{2,3\}}) = 0.
    \]
    This mirrors the relation in (\ref{eqn:S4relation}), which defines $\VV_{\cs_4}$ in $\Pp^7$.
    The inverse of $\pi$ can then be rewritten as 
    \begin{align*}\pi^{-1} : \Skew_4(\Cc) &\to U_4 \\ \Omega &\mapsto (\mathrm{Pf}(\Omega_S))_{S \in \mathcal{A}}\end{align*}
    where $\mathcal{A}$ is the set of all subsets of $[4]$ of even cardinality. Here, we think of elements in $U_4$ as being vectors in $\mathbb{C}^7$, indexed by Fourier coordinates (except $q_{0000})$. Each element of $\mathcal{A}$ corresponds to some Fourier coordinate $q_{g_1 g_2 g_3 g_4}$, and gives the $i$ for which $g_i = 1$ in $q_{g_1 g_2 g_3 g_4}$, and are ordered as in equation ({\ref{eqn:u4example}}). This shows that $U_4$ embeds in $\Skew_4(\Cc)$ and the inverse is given by Pfaffians.
\end{example}


Our first result generalizes \cref{ex: extended example} to all $n \geq 4$. We write $\mathbf{g}=(g_1,\ldots,g_n)\in(\zmodtwo)^n$. Let $\VV_{\cs_n}$ denote the phylogenetic variety of the $n$-sunlet $\cs_n$ in Fourier coordinates. The vanishing ideal of $\VV_{\cs_n}$ is the kernel of the following homomorphism of polynomial rings:
\begin{align*}
    \varphi_n : Q_n &\to A_n \\
                q_{\mathbf{g}} &\mapsto \prod_{i=1}^n a_{g_i}^i \left(\prod_{i=1}^{n-1} a^{n+i}_{\sum_{j=1}^i g_j} + \prod_{i=2}^{n} a^{n+i}_{\sum_{j=2}^i g_j}  \right)
\end{align*}
where
\begin{align*}
    Q_n &= \Cc\left[q_{\mathbf{g}} ~|~ \mathbf{g} = (g_1,\ldots, g_n) \in (\zmodtwo)^n \text{ where } \sum_{i=1}^n g_i = 0\right] \text{ and } \\
    A_n &= \Cc[a^i_g ~|~ 1 \leq i \leq 2n, \;g\in \zmodtwo].
\end{align*}

We are interested in the affine open patch where $q_{\mathbf{0}} \neq 0$, where now we write $\mathbf{0}=(0,\ldots,0)\in (\zmodtwo)^n$ for brevity. As a reminder, this is because $q_{\mathbf{0}}$ is the sum of the probability coordinates, so looking at this open patch amounts to enforcing the extra condition that the sum of the probability coordinates is 1 as we are free to scale $q_{\mathbf{0}}$ to any non-zero complex number on this open patch. As such, we let $U_n$ be $\VV_{\cs_n} \setminus\VV (q_{\mathbf{0}})$, and we refer to $U_n$ as the \emph{statistically relevant affine open patch}. Then the vanishing ideal of $U_n$ is given by the kernel of the induced map defined below.
\begin{align*}
(\varphi_n)_0 : ((Q_n)_{q_{\mathbf{0}}})_0 &\to ((A_n)_{\varphi_n(q_{\mathbf{0}})})_0 \\
\frac{q_{\mathbf{g}}}{q_{\mathbf{0}}} &\mapsto  \frac{\varphi_n(q_{\mathbf{g}})}{\varphi_n(q_{\mathbf{0}})}
\end{align*}
Note that $((Q_n)_{q_{\mathbf{0}}})_0$ refers to localizing $Q_n$ at $q_{\mathbf{0}}$ and then taking the degree 0 part of the ring, i.e. it is the polynomial ring in the variables $q_{\mathbf{g}}/q_{\mathbf{0}}$. Similarly, $((A_n)_{\varphi_n(q_{\mathbf{0}})})_0$ is the degree 0 part of the ring $A_n$ localized at $\varphi_n(q_{\mathbf{0}})$. Throughout the rest of this section, we denote the kernel of this map as
\[
    J_n = \ker((\varphi_n)_0).
\]

Now, define $X_n$ to be the polynomial ring $\Cc\left[x_{ij} ~|~ 1 \leq i < j \leq n\right]$. Let $\Omega^n = (x_{ij})$ be an $n\times n$ skew-symmetric matrix with variable entries, and let $\{e_i\}_{i=1}^{n}$ be the standard basis for the $\mathbb{Z}$-module $(\zmodtwo)^n$. Define the ring homomorphism
\begin{align*}
    \psi_n : ((Q_n)_{q_{\mathbf{0}}})_0 &\rightarrow X_n \\
            \frac{q_{\mathbf{g}}}{q_{\mathbf{0}}} &\mapsto \mathrm{Pf}(\Omega^n_\mathbf{g})
\end{align*}
where $\mathbf{g} = \sum_{j=1}^{2k} e_{i_j} \in (\zmodtwo)^n$ for some $k\in\Nn$ with $2k\leq n$, and some set $\{i_1,\ldots, i_{2k}\} \subset [n]$, and where $\Omega^n_\mathbf{g}$ is the $2k\times 2k$ skew-symmetric sub-matrix of $\Omega^n$ whose rows and columns are indexed by the support of $\mathbf{g}$. Define 
\begin{align*}
    \widehat{\varphi_n} : X_n &\rightarrow ((A_n)_{\varphi_n(q_{\mathbf{0}})})_0 \\
            x_{ij} &\mapsto \frac{\varphi_n(q_{e_i + e_j})}{\varphi_n(q_{\mathbf{0}})}
\end{align*}
and denote the kernel of this map by
\[
    I_n = \ker(\widehat{\varphi_n}).
\]

\begin{proposition}
    \label{prop:FactorThroughPfaffian}
    The map $(\varphi_n)_0$ factors through $\psi_n$, i.e. $(\varphi_n)_0 = \widehat{\varphi_n} \circ \psi_n$. Moreover, there is an isomorphism $((Q_n)_{q_{\mathbf{0}}})_0/J_n \isom X_n/I_n$; thus, $U_n$ and $\VV(I_n) \subseteq\Skew_n(\Cc)$ are isomorphic.
\end{proposition}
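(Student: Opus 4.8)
The plan is to establish the factorization $\varphi_n = \widehat{\varphi_n} \circ \psi_n$ by checking it on generators $q_g$ of $Q_n$, and then deduce the ring isomorphism $Q_n/J_n \cong X_n/I_n$ from the factorization together with the surjectivity of $\psi_n$. For the factorization, fix $g \in (\zmodtwo)^n$ with $\sum_i g_i = 0$, say $g = \sum_{j=1}^{2k} e_{i_j}$ with $i_1 < \dots < i_{2k}$ its support. On one hand $\varphi_n(q_g)$ is the explicit product-of-$a$'s expression given in the definition of $\varphi_n$. On the other hand, $\widehat{\varphi_n}(\psi_n(q_g)) = \frac{1}{2^k}\widehat{\varphi_n}(\mathrm{Pf}(\Omega^n_g))$, and since $\widehat{\varphi_n}$ is a ring homomorphism sending $x_{ij} \mapsto 2\varphi_n(q_{e_i+e_j})$, this equals $\frac{1}{2^k}$ times the Pfaffian polynomial of the $2k\times 2k$ skew-symmetric matrix with $(s,t)$-entry $2\varphi_n(q_{e_{i_s}+e_{i_t}})$. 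So the identity to prove is
\[
    \varphi_n(q_g) \;=\; \mathrm{Pf}\!\left(\big(\varphi_n(q_{e_{i_s}+e_{i_t}})\big)_{1\le s,t\le 2k}\right),
\]
i.e. that the quantities $\varphi_n(q_g)$, viewed as functions of the support of $g$, obey the Pfaffian relations of a generic skew-symmetric matrix. My approach to this identity is induction on $k$ using the Laplace-type expansion of \Cref{prop: pfaffian laplace expansion}: expanding along the first row reduces $\mathrm{Pf}$ of the $2k\times 2k$ matrix to an alternating sum $\sum_{t=2}^{2k}(-1)^t \varphi_n(q_{e_{i_1}+e_{i_t}})\,\varphi_n(q_{g - e_{i_1} - e_{i_t}})$, and one must match this against the closed form for $\varphi_n(q_g)$. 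Concretely, factor out $\prod_{i\in\mathrm{supp}(g)} a^i$ from every $\varphi_n(q_\bullet)$ appearing; what remains on the right is an alternating sum of products of two "reticulation factors" of the form $(\prod a^{n+i} + \prod a^{n+i})$, and one checks this telescopes to the single reticulation factor $\big(\prod_{i:\sum_{j\le i}g_j = 1} a^{n+i} + \prod_{i:\sum_{2\le j\le i}g_j = 1} a^{n+i}\big)$ attached to $q_g$. This telescoping is essentially the statement, already exploited in \cite{cummings2024invariants}, that these $a$-monomials behave like a "split system" along the cycle; the sign bookkeeping from $(-1)^t$ is what makes the cross terms cancel.

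Alternatively — and this may be cleaner to write — one can prove the identity by observing that both sides are, after the substitution $\lambda a^{n+i}_\bullet \mapsto$ (reticulation parameters), sums over the two displayed trees $\ct_0, \ct_1$ of the sunlet, and on each displayed tree $\varphi_{\ct_\sigma}(q_g)$ is a product of edge parameters governed by the splits of $\ct_\sigma$; the fact that a product of tree-split parameters assembles into a Pfaffian is exactly the classical statement (implicit in \cite{sturmfels2005toric}) that $2\times 2$ flattening minors of a tree model correspond to Pfaffian/determinantal structure, extended here by linearity of the mixture to the $2k\times 2k$ Pfaffian. Either way, once the generator-wise identity is verified, the map equality $\varphi_n = \widehat{\varphi_n}\circ\psi_n$ of ring homomorphisms follows since two ring homomorphisms agreeing on a generating set of the domain are equal.

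For the isomorphism $Q_n/J_n \cong X_n/I_n$: first, $\psi_n$ is surjective because $\psi_n(q_{e_i+e_j}) = \frac{1}{2} x_{ij}$, so the images of the degree-two coordinates $q_{e_i+e_j}$ generate $X_n$. Hence $\psi_n$ induces a surjection $\overline{\psi_n}\colon Q_n/\psi_n^{-1}(0) \to X_n$, but I instead want $\psi_n$ to descend to the quotients by $J_n$ and $I_n$. From $\varphi_n = \widehat{\varphi_n}\circ\psi_n$ we get $J_n = \ker\varphi_n \supseteq \psi_n^{-1}(\ker\widehat{\varphi_n}) = \psi_n^{-1}(I_n)$, and conversely if $f \in J_n$ then $\widehat{\varphi_n}(\psi_n(f)) = \varphi_n(f) = 0$ so $\psi_n(f) \in I_n$, giving $J_n = \psi_n^{-1}(I_n)$ exactly. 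Therefore $\psi_n$ induces a well-defined injective ring homomorphism $Q_n/J_n \hookrightarrow X_n/I_n$ (injectivity because the preimage of $I_n$ is precisely $J_n$), and it is surjective because $\psi_n$ is surjective; this is the desired isomorphism. The main obstacle is the first part — verifying the generator-wise Pfaffian identity — and in particular getting the signs in the Laplace expansion to line up with the cyclic ordering of the leaves; everything after that is formal diagram-chasing with ring homomorphisms.
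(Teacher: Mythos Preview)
Your outline matches the paper's proof: induction on $k=\tfrac12|\mathrm{supp}(g)|$ via the Laplace expansion of the Pfaffian, reduction to the quadratic relation
\[
F \;=\; q_{0}\,q_g \;-\; \sum_{j=2}^{2k}(-1)^{j}\, q_{e_{i_1}+e_{i_j}}\,q_{g+e_{i_1}+e_{i_j}} \;\in\; \ker\varphi_n,
\]
and the isomorphism from surjectivity of $\psi_n$ together with $J_n=\psi_n^{-1}(I_n)$. The one substantive difference is how $F\in\ker\varphi_n$ is verified. The paper does not compute directly: it invokes \cite[Theorem~4.5]{cummings2024invariants}, which says a degree-$(2,g)$ polynomial lies in $\ker\varphi_n$ iff it lies in the kernels of two explicit linear maps $M_{\mathbb E}$ and $M_{\mathbb O}$ on $(Q_n)_{(2,g)}$, and then checks the pairwise equalities $M_{\mathbb E}(j\text{th term})=M_{\mathbb E}((j{-}1)\text{th term})$ and $M_{\mathbb O}(j\text{th term})=M_{\mathbb O}((j{+}1)\text{th term})$ for $j$ even. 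Your direct telescoping can be made to work, but it is more delicate than ``one checks this telescopes'': the two reticulation summands in $\varphi_n(q_h)$ require \emph{two different} cancellation patterns (pairing $j\leftrightarrow j{-}1$ for one summand and $j\leftrightarrow j{+}1$ for the other), and only after both telescopes collapse do you recover the full reticulation factor of $q_g$. This is exactly what $M_{\mathbb E}$ and $M_{\mathbb O}$ encode. The paper's route buys a uniform, case-free verification by quoting existing machinery; yours would be self-contained but needs both telescopes written out explicitly. Your second ``displayed-tree'' sketch is gesturing at the same two-tree decomposition but is too vague as written to stand as a proof.
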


The following lemma is key to the proof of \cref{prop:FactorThroughPfaffian}

\begin{lemma}
\label{lemma: PfaffianExpansion}
    Let $\{e_1,\dotsc,e_n\}$ be the standard basis for $(\zmodtwo)^n$. For $\mathbf{g} = \sum_{j=1}^{2k} e_{i_j}$ where $i_1 < i_2 < \dotsc < i_{2k}$ and $k \geq 2$, the polynomial,
    \[
       F = q_{\mathbf{0}}q_{\mathbf{g}} - \sum_{j=2}^{2k} (-1)^j q_{e_{i_1}+ e_{i_j}} q_{\mathbf{g} + e_{i_1}+ e_{i_j}},
    \]
    lies in the kernel of $\varphi_n$ and is homogeneous of degree $(2,\mathbf{g})$ in the multigrading defined in equation ({\ref{eqn:multigrading}}).
\end{lemma}

\begin{proof}
    The proof follows directly from \cite[Theorem 4.5]{cummings2024invariants}. We recall the necessary details here.
    By \cite[Theorem 4.5]{cummings2024invariants}, $F$ is an invariant if and only if it is in the kernel of two linear maps, $M_\mathbb{E}$ and $M_\mathbb{O}$, defined on the linear subspace of $Q_n$ with multidegree $(2,\mathbf{g})$,
    \[
    (Q_n)_{(2,\mathbf{g})} = \mathrm{span}_\Cc\{q_{\mathbf{h}}q_{\mathbf{k}} ~|~ \deg(q_{\mathbf{h}}q_{\mathbf{k}}) = (2,\mathbf{g})\}.
    \] 
Define $\mathcal{F} = [n] \setminus \{i_1,\dotsc,i_{2k}\}$ to be the complement of the support of $\mathbf{g}$, and consider the sets below.
    \begin{align*}
        \mathbb{E} &= \{i ~|~ |[i] \setminus \mathcal{F}| \text{ is even and } 2 \leq i \leq n-1\} \\
        \mathbb{O} &= \{i ~|~ |[i] \setminus \mathcal{F}| \text{ is odd and } 2 \leq i \leq n-1\}
    \end{align*}

    Now, if $\mathbf{h} <_\text{lex} \mathbf{k}$ and the degree of $q_{\mathbf{h}}q_{\mathbf{k}}$ is $(2,\mathbf{g})$, then we define the two linear maps via
    \begin{align*}
        M_\mathbb{E}(q_{\mathbf{h}} q_{\mathbf{k}}) &= \left(\sum_{i=1}^l h_i \right)_{l\in \mathbb{E}} \in (\zmodtwo)^\mathbb{E}, \\
        M_\mathbb{O}(q_{\mathbf{h}} q_{\mathbf{k}}) &= \left(\sum_{i=1}^l h_i \right)_{l\in \mathbb{O}} \in (\zmodtwo)^\mathbb{O},
    \end{align*}
    where $\mathbf{h}=(h_1,\ldots,h_n)$ and $\mathbf{k}=(k_1,\ldots,k_n)$. If $j$ is even, then
    \begin{align*}
        M_{\mathbb{E}} (\mathbf{q}_{\mathbf{g}+e_{i_1}+ e_{i_j}} q_{e_{i_1}+ e_{i_j}}) &= M_\mathbb{E} (q_{\mathbf{g}+e_{i_1}+ e_{i_{j-1}}} q_{e_{i_1}+ e_{i_{j-1}}}), \\
        M_{\mathbb{O}} (q_{\mathbf{g}+e_{i_1}+ e_{i_j}} q_{e_{i_1}+ e_{i_j}}) &= M_\mathbb{O} (q_{\mathbf{g}+e_{i_1}+ e_{i_{j+1}}} q_{e_{i_1}+ e_{i_{j+1}}}). 
    \end{align*}
    In particular, we see that $F$ is in $\ker M_\mathbb{E} \cap \ker M_\mathbb{O}$; hence, $F \in \ker \varphi_n$ as claimed.
\end{proof}

\begin{proof}[Proof of \cref{prop:FactorThroughPfaffian}]
    The second statement follows from the first by the first isomorphism theorem. For the first statement, we must show that for $\mathbf{g} = \sum_{j=1}^{2k} e_{i_j}$ we have
    \[(\varphi_n)_0(q_{\mathbf{g}}/q_{\mathbf{0}}) = \widehat{\varphi_n}(\psi_n(q_{\mathbf{g}}/q_{\mathbf{0}})).\]  
    
    We proceed by induction on $k$.
    When $k = 1$, we have $\mathbf{g} = e_{i_1} + e_{i_2}$ for some $i_1 < i_2$, so
    \[
        \widehat{\varphi_n}(\psi_n(q_{e_{i_1} + e_{i_2}}/q_{\mathbf{0}})) = \widehat{\varphi_n}(x_{ij}) = \frac{\varphi_n(q_{e_{i_1} + e_{i_2}})}{\varphi_n(q_{\mathbf{0}})} = (\varphi_n)_0(q_{e_{i_1}+e_{i_2}}/q_{\mathbf{0}})
    \]
    establishing the base case for the induction.
    
    Now, we assume that $k \geq 2$.
    \begin{align*}
        \widehat{\varphi_n}(\psi_n(q_{\mathbf{g}})) &= \widehat{\varphi_n}\left(\mathrm{Pf}(\Omega^n_{\mathbf{g}}) \right) \\
                  &= \widehat{\varphi_n}\left(\sum_{j=2}^{2k} (-1)^j x_{i_1 i_j}\mathrm{Pf}(\Omega^n_{\mathbf{g} + e_{i_1} + e_{i_j}}) \right) \text{(using equation \ref{eqn:laplace})}\\
                  &= \sum_{j=2}^{2k} (-1)^j \widehat{\varphi_n}\left(x_{i_1 i_j}\right) \widehat{\varphi_n}\left(\mathrm{Pf}(\Omega^n_{\mathbf{g} + e_{i_1} + e_{i_j}})\right)\\
                  &= \sum_{j=2}^{2k} (-1)^j \frac{\varphi_n(q_{e_{i_1}+ e_{i_j}})}{\varphi_n(q_{\mathbf{0}})}\widehat{\varphi_n}\left(\mathrm{Pf}(\Omega^n_{\mathbf{g} + e_{i_1} + e_{i_j}})\right)
    \end{align*}
    By the induction hypothesis, we have
    \[
    \widehat{\varphi_n}\left(\mathrm{Pf}(\Omega^n_{\mathbf{g} + e_{i_1} + e_{i_j}})\right) = \frac{\varphi_n(q_{\mathbf{g} + e_{i_1} + e_{i_j}})}{\varphi_n(q_{\mathbf{0}})}.
    \]
    Thus, we arrive at the following expression for $\widehat{\varphi_n}(\psi_n(q_{\mathbf{g}}))$:
    \begin{align*}
        \widehat{\varphi_n}(\psi_n(q_{\mathbf{g}})) &= \sum_{j=2}^{2k} (-1)^j  \frac{\varphi_n(q_{e_{i_1} + e_{i_j}})}{\varphi_n(q_{\mathbf{0}})} \frac{\varphi_n ( q_{\mathbf{g} + e_{i_1} + e_{i_j}})}{\varphi_n(q_{\mathbf{0}})} \\
        &= \frac{1}{\varphi_n(q_{\mathbf{0}})^2}\sum_{j=2}^{2k} (-1)^j \varphi_n(q_{e_{i_1} + e_{i_j}} q_{\mathbf{g} + e_{i_1} + e_{i_j}}).
    \end{align*}

    By \cref{lemma: PfaffianExpansion}, the polynomial $F = q_{\mathbf{0}} q_{\mathbf{g}} - \sum_{j=2}^{2k}(-1)^j q_{e_{i_1}+ e_{i_j}} q_{\mathbf{g} + e_{i_1}+ e_{i_j}}$ lies in the kernel of $\varphi_n$; thus, the sum above can be replaced with $\varphi_n(q_{\mathbf{0}} q_{\mathbf{g}})$. Finally, we have that
    \[
        \widehat{\varphi_n}(\psi_n(q_{\mathbf{g}}/q_{\mathbf{0}})) = \frac{\varphi_n(q_{\mathbf{0}}q_{\mathbf{g}})}{\varphi_n(q_{\mathbf{0}})^2} = \frac{\varphi_n(q_{\mathbf{g}})}{\varphi_n(q_{\mathbf{0}})} = (\varphi_n)_0(q_{\mathbf{g}}/q_{\mathbf{0}})
    \]
    completing the induction step and the proof.
\end{proof}

\cref{prop:FactorThroughPfaffian} shows that $U_n = \VV_{\cs_n} \setminus \VV(q_{\mathbf{0}})$ embeds into $\Skew_n(\Cc)$, and in fact, its vanishing ideal in $X_n$ is given by $I_n$. 
As such, we are primarily interested in describing the phylogenetic variety for the $n$-sunlet in terms of $I_n$.
With \cref{thm: fourier coordinates are expected values} and \cref{prop:FactorThroughPfaffian}, we can provide a statistical interpretation of this algebraic result. Namely, that a distribution of states at the leaves of this model is completely determined by its covariance matrix since the $x_{ij}$'s can be interpreted as covariances between variables at the leaves.

Our next result proves that the minors appearing in \cref{thm:main} lie in $I_n$.

\begin{proposition}
\label{thm:gensofkeralpha}
    With $\widehat{\varphi_n}: X_n \to ((A_n)_{\varphi_n(q_{\mathbf{0}})})_0$ and $\Omega^n$ as above, the following polynomials are in $I_n = \ker \widehat{\varphi_n}$
    \begin{enumerate}
        \item [(a)] $\det(\Omega^n_{\{i,j\}, \{k,\ell\}})$ where $1 < i < j < k < \ell \leq n$
        \item [(b)] $\det(\Omega^n_{\{1,i_2,i_3\},\{j_1,j_2,j_3\}})$ where $1 < i_2 < j_1 < j_2 < j_3 < i_3 \leq n$
    \end{enumerate}
    where $\Omega^n_{S, T}$ is the submatrix of $\Omega^n$ whose rows and columns are indexed by $S$ and $T$, respectively.
\end{proposition}

\begin{proof}
The proof uses known results for CFN trees which can be found in \cite{sturmfels2005toric}. 
Let $\varphi_n : Q_n \to A_n$ refer to the parameterization of the $n$-sunlet. We let $\varphi_{\ct_1}$ and $\varphi_{\ct_2}$ be the parameterizations for the subtrees of $\cs_n$ obtained by deleting the reticulation edges $e_{2n}$ and $e_{n+1}$, respectively, and we let $\varphi_{\ct_0}$ be the homogeneous parameterization of the caterpillar subtree $\ct_0$ of $\cs_n$ obtained by deleting all edges incident to the reticulation vertex, the reticulation vertex itself, and leaf 1. Then we have that 
\begin{enumerate}
    \item $\varphi_n = \varphi_{\ct_1} + \varphi_{\ct_2}$, and
    \item $\varphi_n(q_{e_i + e_j}) = (a_0^1 a_0^{n+1} + a_0^1 a_0^{2n}) \varphi_{\ct_0}(q_{e_i + e_j})$ for all $1 < i < j \leq n$, and
    \item $a_0^{2n}\varphi_{\ct_1}(q_{\mathbf{g}}) = a_0^{n+1}\varphi_{\ct_2}(q_{\mathbf{g}})$ whenever the first coordinate of $\mathbf{g}$ is 0.
\end{enumerate}

Recall that the toric ideal $I_{\ct_0}$ associated with the binary tree $\ct_0$ is an iterated toric fiber product of 3-leaf trees \cite{sturmfels2005toric, sullivant2007toric}. This means that for each split $A|B$ of $\ct_0$, the $2 \times 2$ minors of the flattenings $M_g^{A | B}$ are all in the ideal $I_{\ct_0}$ for $g \in \zmodtwo$ \cite{sturmfels2005toric, sullivant2007toric}. Since $1 < i < j < k < \ell$, there is a split $A|B$ of $\ct_0$ where $i,j \in A$ and $k, \ell \in B$. Then the following is a submatrix of $M_1^{A | B}$:
\[
    \omega = \begin{pmatrix}
        q_{e_i + e_k} & q_{e_i + e_\ell} \\
        q_{e_j + e_k} & q_{e_j + e_\ell}
    \end{pmatrix}.
\]
In particular, the determinant of $\omega$ must lie in the kernel of $\varphi_n$; hence, $\det(q_{\mathbf{0}}^{-1} \omega)$ lies in the kernel of $(\varphi_n)_0$. Finally, note that 
    \[
    \psi_n(\det (q_{\mathbf{0}}^{-1}\omega)) = \det (\Omega^n_{\{i,j\},\{k,\ell\}}),
    \] 
and since $\widehat{\varphi_n} \circ \psi_n = (\varphi_n)_0$, we see that $\det (\Omega^n_{\{i,j\},\{k,\ell\}})$ lies in the kernel of $\widehat{\varphi_n}$.

For the $3\times 3$ minors, fix $1 < i_2 < j_1 < j_2 < j_3 < i_3$, and let $\omega$ be the matrix obtained from $\Omega^n_{\{1,i_2,i_3\},\{j_1,j_2,j_3\}}$ by populating it with $q_{e_i + e_j}$'s instead of $x_{i,j}$'s. Consider $\varphi_n(\omega)$ below and rewrite this matrix using points (1) and (3) above.

\[
\begin{pmatrix}
    (\varphi_{\ct_1} + \varphi_{\ct_2})(q_{e_1 + e_{j_1}}) & (\varphi_{\ct_1} + \varphi_{\ct_2})(q_{e_1 + e_{j_2}}) & (\varphi_{\ct_1} + \varphi_{\ct_2})(q_{e_1 + e_{j_3}}) \\
    \left(1 + \frac{a_0^{2n}}{a_0^{n+1}}\right)\varphi_{\ct_1}(q_{e_{i_2} + e_{j_1}}) & \left(1 + \frac{a_0^{2n}}{a_0^{n+1}}\right)\varphi_{\ct_1}(q_{e_{i_2} + e_{j_2}}) & \left(1 + \frac{a_0^{2n}}{a_0^{n+1}}\right)\varphi_{\ct_1}(q_{e_{i_2} + e_{j_3}}) \\
    -\left(1 + \frac{a_0^{n+1}}{a_0^{2n}}\right)\varphi_{\ct_2}(q_{e_{j_1} + e_{i_3}}) & -\left(1 + \frac{a_0^{n+1}}{a_0^{2n}}\right)\varphi_{\ct_2}(q_{e_{j_2} + e_{i_3}}) & -\left(1 + \frac{a_0^{n+1}}{a_0^{2n}}\right)\varphi_{\ct_2}(q_{e_{j_3} + e_{i_3}})
\end{pmatrix}
\]
We claim that the following two matrices are of rank 1. On $\ct_1$, we can choose a split $A|B$ so that $1,i_2 \in A$ and $j_1,j_2,j_3 \in B$, so the following matrix is a submatrix of $M^{A|B}_0$.
\[
\begin{pmatrix}
    q_{e_1 + e_{j_1}} & q_{e_1 + e_{j_2}} & q_{e_1 + e_{j_3}} \\
    q_{e_{i_2} + e_{j_1}} & q_{e_{i_2} + e_{j_2}} & q_{e_{i_2} + e_{j_3}}
\end{pmatrix}
\]
By \cite[Theorem~23]{sturmfels2005toric}, each $2\times 2$ minor lies in $I_{\ct_1}$. Therefore, this matrix has rank 1 after applying $\varphi_{\ct_1}$ to the entries.
Similarly, on $\ct_2$, there is a split $A|B$ with $1,i_3 \in A$ and $j_1,j_2,j_3 \in B$, so the following matrix is a submatrix of $M^{A|B}_1$.
\[
\begin{pmatrix}
    q_{e_1 + e_{j_1}} & q_{e_1 + e_{j_2}} & q_{e_1 + e_{j_3}} \\
    q_{e_{j_1} + e_{i_3}} & q_{e_{j_2} + e_{i_3}} & q_{e_{j_3} + e_{i_3}}
\end{pmatrix}
\]
Therefore, this matrix has rank 1 after applying $\varphi_{\ct_2}$ to the entries.

It follows that we can use row operations to zero out the first row of $\varphi_n(\omega)$; hence, $\varphi_n(\det(\omega))$ is $0$. By localizing this relation, we see that $\det(q_{\mathbf{0}}^{-1}\omega)$ lies in the kernel of $(\varphi_n)_0$; hence, passing this relation through $\psi_n$ yields that
\[
\widehat{\varphi_n}(\det (\Omega^n_{\{1,i_2,i_3\},\{j_1,j_2,j_3\}})) = 0
\] 
as claimed.
\end{proof}


\begin{example}
Consider the ideal $I_6 = \ker(\widehat{\varphi_6})$. This ideal is generated by the following minors of $\Omega^6$: all five $2\times 2$ minors strictly above the diagonal that do not include the first row, which are
\begin{align*}
\mathrm{det}(\Omega_{\{2,3\},\{4,5\}}^6) &= x_{2,4}x_{3,5} - x_{2,5}x_{3,4}, & 
\mathrm{det}(\Omega_{\{2,3\},\{4,6\}}^6) &= x_{2,4}x_{3,6} - x_{2,6}x_{3,4},\\
\mathrm{det}(\Omega_{\{2,3\},\{5,6\}}^6) &= x_{2,5}x_{3,6} - x_{2,6}x_{3,5}, &
\mathrm{det}(\Omega_{\{2,4\},\{5,6\}}^6) &= x_{2,5}x_{4,6} - x_{2,6}x_{4,5}, \\
\mathrm{det}(\Omega_{\{3,4\},\{5,6\}}^6) &= x_{3,5}x_{4,6} - x_{3,6}x_{4,5},
\end{align*}
and the $3\times 3$ minor $\Omega_{\{1,2,6\}, \{3,4,5\}}^6$ highlighted in blue below.
\[
\begin{pmatrix}
0&x_{1,2}&\textcolor{blue}{\boldsymbol{x_{1,3}}}&\textcolor{blue}{\boldsymbol{x_{1,4}}}&\textcolor{blue}{\boldsymbol{x_{1,5}}}&x_{1,6}\\
-x_{1,2}&0&\textcolor{blue}{\boldsymbol{x_{2,3}}}&\textcolor{blue}{\boldsymbol{x_{2,4}}}&\textcolor{blue}{\boldsymbol{x_{2,5}}}&x_{2,6}\\
-x_{1,3}&-x_{2,3}&0&x_{3,4}&x_{3,5}&x_{3,6}\\
-x_{1,4}&-x_{2,4}&-x_{3,4}&0&x_{4,5}&x_{4,6}\\
-x_{1,5}&-x_{2,5}&-x_{3,5}&-x_{4,5}&0&x_{5,6}\\
-x_{1,6}&-x_{2,6}&\textcolor{blue}{\boldsymbol{-x_{3,6}}}&\textcolor{blue}{\boldsymbol{-x_{4,6}}}&\textcolor{blue}{\boldsymbol{-x_{5,6}}}&0
\end{pmatrix}
\]
\end{example}

Let $G_n$ be the set of minors described in the \cref{thm:gensofkeralpha}. 
The previous result establishes that $G_n \subseteq I_n$.  We now show that $G_n$ actually forms a Gr{\"o}bner basis for $I_n$ with respect to the following lexicographic term order $>$ which is given by:
\begin{itemize}
    \item[(a)] $x_{1,i} > x_{j,k}$ for all  $j > 1$ and any  $i, k$. 
    \item[(b)] $x_{1,i} > x_{1,j} \iff i < j$. 
    \item[(c)] $x_{i,j} > x_{k,\ell} \iff i > k $ or $i = k$ and $j < \ell$, for $i, k > 1$. 
\end{itemize}
Under this lexicographic ordering, the initial terms of the minors in $G_n$ are below.
\begin{align*}
    \text{in}_{>}(\det(\Omega^n_{\{i,j\}, \{k,\ell\}})) &= - x_{i,\ell}x_{k,j} \\
    \text{in}_{>}(\det(\Omega^n_{\{1,i_2,i_3\},\{j_1,j_2,j_3\}})) &= -x_{1,j_1}x_{i_2,j_2}x_{j_3,i_3}
\end{align*}

\begin{lemma}\label{lem:gb4-11}
    For the cases $4 \leq n \leq 11$ and the term order given above, $G_n$ forms a Gr{\"o}bner basis for $\langle G_n \rangle$.
\end{lemma}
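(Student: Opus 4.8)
The plan is to verify the statement directly on a computer algebra system, separately for each of the eight values $n\in\{4,5,\dots,11\}$; for $n=4$ the set $G_4$ is empty (no index tuple satisfies the constraints) and $I_4=0$, and for $n=5$ the set $G_5$ is a single polynomial, so those cases are immediate, and the real content is $6\le n\le 11$. By \cref{thm:gensofkeralpha} we already know $G_n\subseteq I_n$, so there are exactly two things left to check: that $G_n$ is a Gr\"obner basis of the ideal $\langle G_n\rangle$ it generates, and that $\langle G_n\rangle = I_n$. For the first, I would apply Buchberger's criterion in the ring $X_n=\Kk[x_{ij}:1\le i<j\le n]$: form every S-polynomial between pairs of elements of $G_n$, reduce modulo $G_n$ with respect to the term order $>$, and confirm every reduction is $0$; equivalently, confirm that running Buchberger's algorithm on $G_n$ produces no leading monomial beyond the $x_{i,\ell}x_{k,j}$ and $x_{1,j_1}x_{i_2,j_2}x_{j_3,i_3}$ recorded just above the statement. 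This is the heart of the computation, and it is of modest size: $I_n$ lives in only $\binom{n}{2}$ variables (at most $55$, for $n=11$), and $|G_n|$ is on the order of $\binom{n-1}{4}+\binom{n-1}{5}$ generators (about $462$ when $n=11$).

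For the second point, that $\langle G_n\rangle=I_n$, there are two convenient routes. The most direct is to compute $I_n=\ker\widehat{\varphi_n}$ by elimination: work in the polynomial ring on the $x_{ij}$ together with the parameters $a^1,\dots,a^{2n}$, impose the relations $x_{ij}-2\varphi_n(q_{e_i+e_j})$ for $i<j$, eliminate the $2n$ parameter variables, and check that the resulting ideal in $X_n$ equals $\langle G_n\rangle$ (comparing reduced Gr\"obner bases). Alternatively, one can avoid the elimination entirely: since $I_n$ is prime, being the kernel of a ring map to the domain $A_n$, and has dimension at most $2n-1$ by \cref{rem:dimension}, it suffices to have the computer certify that $\langle G_n\rangle$ is prime and that $\dim X_n/\langle G_n\rangle = 2n-1$, whereupon $\langle G_n\rangle\subseteq I_n$ together with equality of dimensions and primality forces $\langle G_n\rangle=I_n$. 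The dimension here can even be read off combinatorially from the Stanley--Reisner complex of the monomial ideal $\text{in}_{>}(\langle G_n\rangle)$, matching the analysis carried out in \cref{cohen-macaulay}.

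I expect no conceptual obstacle; the content of the lemma is precisely that a standard computation can be pushed through these finitely many cases (which then serve as input for the general statement). The only genuine difficulty is computational scale: an ill-chosen intermediate strategy for the $n=11$ Buchberger check, or for the $n=11$ elimination, could blow up. This is mitigated by the small number of variables ($\binom{n}{2}$ rather than $2^{n-1}$), by the fact that the chosen lexicographic order yields a squarefree monomial initial ideal with the transparent generators listed above, and by the ability to cross-check the resulting Hilbert series against the combinatorial count coming from the shelling in \cref{cohen-macaulay}.
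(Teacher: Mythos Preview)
Your approach is essentially the same as the paper's: the proof there is a one-line appeal to explicit computation in \texttt{Macaulay2}, and you are proposing the same thing with more detail on the implementation. One small point of over-reading: as the lemma is used in the paper (see the proof of \cref{cor:GB} and the paragraph immediately after it), it only asserts that $G_n$ is a Gr\"obner basis of the ideal $\langle G_n\rangle$ it generates, not of $I_n$. The equality $\langle G_n\rangle = I_n$ is established separately, for all $n$ at once, in the subsequent propositions via primality and a dimension count, so your second check is not needed for this lemma (though it would of course succeed for these small cases).
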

\begin{proof}
    We prove this by explicit computation in \texttt{Macaulay2} \cite{M2}.  The code for this computation can be found on our MathRepo page \cite{mathrepo}. The computations are done over $\Qq$, which is sufficient to prove $G_n$ is a Gr\"obner basis over $\Cc$.
\end{proof}

\begin{corollary}[The Eleven-to-Infinity Theorem]\label{cor:GB}
For the term order given above and for all $n\geq 6$, $G_n$ forms a Gr{\"o}bner basis.
\end{corollary}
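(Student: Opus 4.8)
The plan is to bootstrap from \cref{lem:gb4-11} by a localization argument: I would show that Buchberger's criterion for $G_n$, for \emph{arbitrary} $n$, follows from Buchberger's criterion for $G_m$ with $m \le 11$, which is exactly the content of the lemma. The cases $6 \le n \le 11$ are covered directly, so the work is for $n \ge 12$.

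Two structural facts underlie everything. First, for each $f \in G_n$ the set $\mathrm{supp}(f)$ of indices occurring among the variables of $f$ coincides with the index set of its leading monomial $\mathrm{in}_>(f)$: a $2\times 2$ minor $\det(\Omega^n_{\{i,j\},\{k,\ell\}})$ is a binomial in the four indices $\{i,j,k,\ell\}$, and the $3\times 3$ determinant written out in the proof of \cref{thm:gensofkeralpha} has every monomial supported on the six indices $\{1,i_2,i_3,j_1,j_2,j_3\}$. Second, for any $T \subseteq [n]$ with $1 \in T$, the unique order-preserving bijection $\beta \colon T \to \{1,\dots,|T|\}$ fixes $1$ and identifies the set of generators of $G_n$ whose index set lies in $T$ with the set $G_{|T|}$, while carrying the term order $>$ on the variables indexed by $T$ to the analogous order on $X_{|T|}$; both claims follow from the purely index-theoretic descriptions of $G_n$ and of $>$, once one checks that the condition ``smallest index $>1$'' for $2\times 2$ minors and the presence of the distinguished index $1$ for $3\times 3$ minors are respected precisely because $\beta(1)=1$.

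Now I would combine Buchberger's criterion with the product (coprimality) criterion \cite{coxlittleoshea}: $G_n$ is a Gr\"obner basis iff, for every pair $f,g \in G_n$ whose leading monomials share a variable, $S(f,g)$ reduces to $0$ modulo $G_n$. Fix such a pair and set $T := \mathrm{supp}(f)\cup \mathrm{supp}(g)\cup\{1\}$. Since $\mathrm{in}_>(f)$ and $\mathrm{in}_>(g)$ share a variable they share two indices, so by the first fact $|\mathrm{supp}(f)\cup\mathrm{supp}(g)| \le 6+6-2 = 10$; and since $1$ already lies in the support of any $3\times3$ generator, adjoining $\{1\}$ costs nothing unless both $f$ and $g$ are $2\times 2$ minors, where $|\mathrm{supp}(f)\cup\mathrm{supp}(g)|\le 4+4-2 = 6$ and $|T|\le 7$. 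Hence $|T| \le 11$ always. The key point is then that a reduction step never enlarges the index set: if $r$ is the current remainder, with $\mathrm{supp}(r)\subseteq T$, and $r$ is reduced by some $h \in G_n$ with $\mathrm{in}_>(h)\mid \mathrm{in}_>(r)$, then $\mathrm{supp}(h) = \mathrm{supp}(\mathrm{in}_>(h)) \subseteq \mathrm{supp}(\mathrm{in}_>(r)) \subseteq \mathrm{supp}(r)$, and the multiplier monomial involves only indices of $r$, so $\mathrm{supp}(r')\subseteq T$. Therefore the entire reduction of $S(f,g)$ modulo $G_n$ uses only generators of $G_n$ supported on $T$, which under $\beta$ are exactly $G_{|T|}$; transporting through $\beta$ shows $S(f,g)$ reduces to $0$ modulo $G_n$ if and only if $S(\beta f,\beta g)$ reduces to $0$ modulo $G_{|T|}$, and the latter holds by \cref{lem:gb4-11} since $|T|\le 11$.

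I expect the only genuine obstacle to be bookkeeping: pinning down the bound $|T|\le 11$ cleanly and verifying that $\beta$ really identifies the $T$-supported part of $(G_n,>)$ with $(G_{|T|},>)$ — in particular handling the asymmetry between the $2\times 2$ generators, which avoid index $1$, and the $3\times 3$ generators, which require it, and checking that $\beta$ commutes with taking $S$-polynomials and with the division algorithm. Once those lemmas are in place, the deduction of \cref{cor:GB} is a mechanical application of Buchberger's criterion together with the product criterion.
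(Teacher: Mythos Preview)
Your argument is correct and follows the same localization strategy as the paper: reduce each $S$-pair to a computation involving at most $11$ indices, transport through the order-preserving relabelling to $X_m$, and invoke \cref{lem:gb4-11}. The one substantive difference is in the case where both generators are $2\times 2$ minors. The paper handles this case separately by citing a known Gr\"obner basis result for ladder determinantal ideals \cite{Gorla2007}, since in that case the index set need not contain $1$ and the relabelling would not identify the $T$-supported generators with $G_{|T|}$. You sidestep this by always adjoining $1$ to $T$, which keeps $|T|\le 11$ (indeed $|T|\le 9$ in the two-$2\times 2$ case even without the product criterion) and makes the bijection with $G_{|T|}$ work uniformly; this renders your version self-contained at the cost of no extra work. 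Your use of the product criterion and the explicit observation that $\mathrm{supp}(h)=\mathrm{supp}(\mathrm{in}_>(h))$ for every $h\in G_n$---which is what guarantees the division algorithm never leaves $\Kk[x_{ij}:i,j\in T]$---are not in the paper's writeup but are exactly the right bookkeeping lemmas to make the argument airtight.
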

\begin{proof}
    By Buchberger's criterion, it is sufficient to show that on division by $G_n$, the remainder of the $S$-polynomial $S(M_1, M_2)$ is $0$, for all $M_1, M_2 \in G_n$. First, consider $M_1$ and $M_2 \in G_n$ such that both $M_1$ and $M_2$ are $2\times 2$ minors. Then $S(M_1, M_2)$ has remainder 0 on division by $G_n$ since the $2\times 2$ minors of $G_n$ constitute a ladder ideal \cite{Gorla2007}, and under this term order (restricted to monomials $x_{i,j}$ with $i > 1$), it is known that these minors form a Gr{\"o}bner basis.
    
    Next, consider $M_1$ and $M_2 \in G_n$, at least one of which is a $3\times 3$ minor. Let $\mathcal{I} = \{i_1,\ldots, i_m\}$ be the set of indices that appear in the indices of all monomials in $M_1$ and $M_2$ so that $6 \leq m \leq 11$ and consider the subalgebra $\Cc[x_{ij}\ |\ i < j,\ i,j\in\mathcal{I}]$ of $X_n$.

    Let $X_m = \Cc[y_{kl} | 1 \leq k < l \leq m]$. Label the elements of $\mathcal{I}$ so that $1=i_1 < i_2 < \cdots < i_m$. Then $\Cc[x_{ij}\ |\ i < j,\ i,j\in\mathcal{I}]$ is isomorphic to $X_m$ via $\pi(x_{i_k i_l}) = y_{kl}$. It is clear that $\pi(G_n\cap \Cc[x_{ij}\ |\ i < j,\ i,j\in\mathcal{I}]) = G_m$. Since $G_m$ is a Gr{\"o}bner basis by \Cref{lem:gb4-11}, we have that $S(\pi(M_1),\pi(M_2))$ has remainder 0 on division by $G_m$. Observe that $y_{kl} < y_{k'l'}$ if and only if $x_{i_k i_l} < x_{i_{k'}i_{l'}}$ so that $\pi$ preserves initial terms, from which it follows that $S(\pi(M_1),\pi(M_2)) = \pi(S(M_1,M_2))$, and therefore $S(M_1,M_2)$ has remainder 0 on division by $\pi^{-1}(G_m)$. Since $\pi^{-1}(G_m)\subset G_n$, the result follows.
\end{proof}

The previous corollary shows that the set of minors $G_n$ form a Gr\"obner basis for the ideal they generate and \cref{thm:gensofkeralpha} shows that $\langle G_n \rangle \subseteq I_n$. For the rest of this section, our goal is to show that $\langle G_n \rangle = I_n$. Our approach will be based on the following fact.

\begin{lemma}
\label{prop:PrimeDimension} 
Let $I, J \subseteq \Cc[x_1, \ldots, x_n]$ be prime ideals such that $J \subseteq I$ and $\dim(I) = \dim(J)$, then
$I = J$.
\end{lemma}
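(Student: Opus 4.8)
The plan is to reduce the statement to standard dimension theory for finitely generated domains over a field. The inclusion $J \subseteq I$ induces a surjection of finitely generated $\Kk$-algebras
\[
    R := \Kk[x_1,\ldots,x_n]/J \longrightarrow \Kk[x_1,\ldots,x_n]/I ,
\]
and since $I$ is prime and contains $J$, its image $P := I/J$ is a prime ideal of the domain $R$. Showing $P = 0$ is the same as showing $I = J$, so it suffices to rule out the possibility $P \neq 0$.

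Suppose $P \neq 0$. Then $\mathrm{ht}(P) \geq 1$ in $R$. Since $R$ is an affine domain over the field $\Kk$ (a quotient of a polynomial ring by a prime ideal), it is equidimensional and catenary, so the dimension formula $\mathrm{ht}(P) + \dim(R/P) = \dim(R)$ holds. This yields $\dim(R/P) < \dim(R)$, that is, $\dim\bigl(\Kk[x_1,\ldots,x_n]/I\bigr) < \dim\bigl(\Kk[x_1,\ldots,x_n]/J\bigr)$, contradicting the hypothesis $\dim(I) = \dim(J)$. Hence $P = 0$ and $I = J$.

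Alternatively, when $\Kk$ is algebraically closed one can argue geometrically: $J \subseteq I$ gives $\VV(I) \subseteq \VV(J)$, and both are irreducible because $I$ and $J$ are prime; an irreducible variety properly contained in another has strictly smaller dimension, so equality of the two dimensions forces $\VV(I) = \VV(J)$, and then $I = \ci(\VV(I)) = \ci(\VV(J)) = J$ by the Nullstellensatz (using that prime ideals are radical). Either way there is no genuine obstacle; the only step requiring care is the appeal to the dimension formula, which is valid precisely because the ambient ring is a polynomial ring over a field.
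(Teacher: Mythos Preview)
Your proof is correct. The paper actually states this lemma without proof, treating it as a standard fact from commutative algebra, so there is nothing to compare against; your argument via the dimension formula $\mathrm{ht}(P) + \dim(R/P) = \dim(R)$ for affine domains over a field is exactly the standard justification one would supply.
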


From this lemma, it is clear that if $\dim(\VV(\langle G_n \rangle)) = \dim((\VV(I_n))$ and $\langle G_n \rangle$ is a prime ideal, then $\langle G_n \rangle = I_n$ and thus $G_n$ forms a Gr\"obner basis for $I_n$. By \cref{prop:sunletDimension}, the dimension of the affine variety $U_n = \VV(I_n)$ is $2n-1$ (since $U_n \subset \VV_{\cs_n}$ is a dense open subset, they must have the same dimension). Therefore, it suffices to show that $\langle G_n \rangle$ is prime and that $\dim(\VV(\langle G_n\rangle)) = 2n-1$. 

\begin{lemma}\label{lemma:radical}
    The ideal $\langle G_n \rangle$ is radical. 
\end{lemma}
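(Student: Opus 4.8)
The plan is to exploit the Gröbner basis structure established in \Cref{cor:GB}: since $G_n$ is a Gröbner basis for $\langle G_n\rangle$ under the given lexicographic order, the initial ideal $\operatorname{in}_>(\langle G_n\rangle)$ is the monomial ideal generated by the squarefree monomials $x_{i,\ell}x_{j,k}$ (for $1<i<j<k<\ell$) and $x_{1,j_1}x_{i_2,j_2}x_{j_3,i_3}$ (for $1<i_2<j_1<j_2<j_3<i_3$). Every generator is squarefree, so $\operatorname{in}_>(\langle G_n\rangle)$ is a squarefree monomial ideal, hence radical. It is a standard fact (see e.g.\ \cite{coxlittleoshea} or the theory of Stanley–Reisner rings) that if $\operatorname{in}_>(I)$ is radical then $I$ is radical: if $f^m\in I$ then $\operatorname{in}_>(f)^m = \operatorname{in}_>(f^m)\in\operatorname{in}_>(I)$, so $\operatorname{in}_>(f)\in\operatorname{in}_>(I)$ since the latter is radical, and one can subtract off a suitable multiple of a Gröbner basis element to reduce the leading term of $f$, then induct on the leading monomial to conclude $f\in I$. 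First I would state this reduction as the key step, citing the Gröbner basis corollary to identify the initial ideal explicitly.

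Concretely, the argument runs as follows. By \Cref{cor:GB}, for $n\ge 6$ the set $G_n$ is a Gröbner basis, and for $n\in\{4,5\}$ this can be checked directly (or handled as in \Cref{lem:gb4-11}); thus in all relevant cases $\operatorname{in}_>(\langle G_n\rangle)$ equals the monomial ideal $M_n$ generated by the leading terms $x_{i,\ell}x_{j,k}$ and $x_{1,j_1}x_{i_2,j_2}x_{j_3,i_3}$ listed after \Cref{cor:GB}. Each of these monomials is squarefree — the four indices $i,j,k,\ell$ in the quadratic generators are pairwise distinct (as are the index pairs $\{i,\ell\},\{j,k\}$), and similarly $\{1,j_1\},\{i_2,j_2\},\{j_3,i_3\}$ are three distinct pairs in the cubic generators — so $M_n$ is squarefree and therefore radical. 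Then I invoke the general principle that a homogeneous (or merely positively graded, which applies here since everything is graded) ideal with radical initial ideal is itself radical: $\sqrt{I}$ and $I$ have the same Hilbert function whenever $\operatorname{in}_>(I)$ is radical, because $\operatorname{in}_>(I)\subseteq\operatorname{in}_>(\sqrt I)\subseteq\sqrt{\operatorname{in}_>(I)}=\operatorname{in}_>(I)$, forcing equality of initial ideals and hence of the ideals.

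I do not expect a genuine obstacle here — the statement is essentially a corollary of \Cref{cor:GB}. The only point requiring a little care is making sure the leading terms are correctly identified as squarefree monomials: one must double-check that in the cubic leading term $x_{1,j_1}x_{i_2,j_2}x_{j_3,i_3}$ no variable is repeated, which follows from the strict inequalities $1<i_2<j_1<j_2<j_3<i_3$ guaranteeing the three pairs $\{1,j_1\}$, $\{i_2,j_2\}$, $\{j_3,i_3\}$ are all distinct (indeed their first coordinates $1, i_2, j_3$ are already distinct). So the bulk of the write-up is just citing \Cref{cor:GB}, observing squarefreeness, and quoting the standard lemma that squarefree initial ideal implies radical. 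I would keep the proof to a few lines and point to \cite{coxlittleoshea} for the general fact about initial ideals and radicality.
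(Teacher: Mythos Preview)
Your proposal is correct and follows essentially the same approach as the paper: identify the initial ideal via the Gr\"obner basis from \Cref{cor:GB}, observe that its generators are squarefree monomials so the initial ideal is radical, and then invoke the standard fact (cited to \cite{coxlittleoshea}) that an ideal with radical initial ideal is itself radical. The paper's version is terser, but the argument is the same.
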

\begin{proof}

The leading terms of $G_n$ are squarefree, so the result follows (see \cite[Chapter 4]{coxlittleoshea}).
\end{proof}

\begin{proposition}
The ideal $\langle G_n \rangle$ is prime for $n \geq 4$. 
\end{proposition}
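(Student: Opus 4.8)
The plan is to realize $\VV(\langle G_n\rangle)$ as the closure of the image of an explicit polynomial map from an affine space; since the source is irreducible, the closure of the image is irreducible, and combining this with \Cref{lemma:radical} yields that $\langle G_n\rangle$ is prime. The small cases are immediate: when $n=4$ the index constraints in \Cref{thm:gensofkeralpha} cannot be satisfied, so $G_4=\emptyset$ and $\langle G_4\rangle=(0)$ is prime in the domain $X_4$; when $n=5$ the only generator is $\det(\Omega^5_{\{2,3\},\{4,5\}})=x_{2,4}x_{3,5}-x_{2,5}x_{3,4}$, an irreducible quadric, so $\langle G_5\rangle$ is prime. Assume $n\ge 6$ from now on.

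Introduce indeterminates $s_2,\dots,s_{n-1}$, $t_3,\dots,t_n$, $\alpha,\beta,u,v$ and define $\Psi_n$ on generators by
\begin{align*}
    x_{i,j}&\longmapsto s_i t_j\quad (2\le i<j\le n),\qquad x_{1,j}\longmapsto \alpha t_j+\beta s_j\quad (3\le j\le n-1),\\
    x_{1,2}&\longmapsto u,\qquad x_{1,n}\longmapsto v.
\end{align*}
That $\image\Psi_n\subseteq\VV(\langle G_n\rangle)$ is a direct substitution check: each $2\times2$ generator involves only entries $x_{i,j}$ with $i>1$, on which $\Psi_n$ produces a rank-one matrix; after substitution the second and third rows of $\Omega^n_{\{1,i_2,i_3\},\{j_1,j_2,j_3\}}$ become scalar multiples of $(t_{j_1},t_{j_2},t_{j_3})$ and $(s_{j_1},s_{j_2},s_{j_3})$ while the first row lies in their span, so each $3\times3$ generator vanishes; and $x_{1,2},x_{1,n}$ occur in no generator of $G_n$. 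Since the source is irreducible, $\overline{\image\Psi_n}$ is irreducible, and computing the generic fibre of $\Psi_n$ — which is the single one-parameter rescaling $s\mapsto cs,\ t\mapsto c^{-1}t,\ \alpha\mapsto c\alpha,\ \beta\mapsto c^{-1}\beta$ — shows $\dim\overline{\image\Psi_n}=2n-1$, matching $\dim\VV(\langle G_n\rangle)$. One also checks that $\widehat{\varphi_n}$ factors through $\Psi_n$, the sunlet parameterization arising by building $s_i,t_j,\alpha,\beta$ from the cycle-edge Fourier parameters; hence $\VV(I_n)\subseteq\overline{\image\Psi_n}$.

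The crux is the equality $\overline{\image\Psi_n}=\VV(\langle G_n\rangle)$. Consider the projection $\pi\colon\VV(\langle G_n\rangle)\to\mathcal T$ recording only the coordinates $x_{i,j}$ with $i>1$, where $\mathcal T$ is the one-sided ladder determinantal variety cut out by the $2\times2$ generators of $G_n$; $\mathcal T$ is irreducible — it is the toric variety of the caterpillar tree $T$ of \Cref{thm:gensofkeralpha} — and $\pi$ is surjective with a zero section, because each $3\times3$ generator is linear in the first-row coordinates $x_{1,j}$ and vanishes when these are set to zero. Over the dense torus orbit $\mathcal T^{\circ}$ this linear system has constant rank, so the fibre of $\pi$ is a $4$-dimensional linear subspace varying algebraically, making $\pi^{-1}(\mathcal T^{\circ})$ a rank-$4$ vector bundle over $\mathcal T^{\circ}$, in particular irreducible of dimension $2n-1$, with $\overline{\pi^{-1}(\mathcal T^{\circ})}=\overline{\image\Psi_n}$. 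Any component of $\VV(\langle G_n\rangle)$ other than $\overline{\image\Psi_n}$ must then be contained in $\pi^{-1}(\mathcal T\setminus\mathcal T^{\circ})$; bounding the dimension of this set stratum by stratum — the rank of the linear system drops only where certain coefficient minors vanish, and that locus shrinks the base stratum faster than it enlarges the fibre — gives $\dim\pi^{-1}(\mathcal T\setminus\mathcal T^{\circ})<2n-1$. Finally, $\langle G_n\rangle$ is radical, hence has no embedded components, and is equidimensional of dimension $2n-1$: this can be read off from its initial ideal, whose squarefree generators define a pure Stanley--Reisner complex, a fact also reflected in the shelling of \Cref{cohen-macaulay}. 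So every component of $\VV(\langle G_n\rangle)$ has dimension $2n-1$, and together with the previous dimension bound this forces $\overline{\image\Psi_n}$ to be the unique component; hence $\VV(\langle G_n\rangle)$ is irreducible and $\langle G_n\rangle$, being radical, is prime.

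The step I expect to be the main obstacle is exactly the boundary bookkeeping above: controlling, over each proper closed stratum of $\mathcal T$, how fast the first-row fibre of $\pi$ grows relative to the drop in the dimension of the base stratum, so that no spurious second $(2n-1)$-dimensional component of $\VV(\langle G_n\rangle)$ arises. The ladder-determinantal structure of the $2\times2$ part (primality and the known dimensions of one-sided ladder ideals) and the linearity of the $3\times3$ generators in the first-row variables are the levers here, while purity of the initial complex is what excludes lower-dimensional spurious components.
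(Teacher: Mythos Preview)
Your overall architecture---parameterize $\VV(\langle G_n\rangle)$ by a polynomial map from an affine space, verify the generators vanish on the image, and combine with \Cref{lemma:radical}---is the same as the paper's, and your map $\Psi_n$ is essentially the paper's map $\beta$ (the paper uses $x_{1,j}\mapsto s_1 t_j + s_j t_1$ for \emph{all} $j$, absorbing your free parameters $u,v$ into the otherwise-unused $t_2$ and $s_n$). The divergence is in how the inclusion $\VV(\langle G_n\rangle)\subseteq\overline{\image}$ is established.

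You argue indirectly: project to the ladder variety $\mathcal T$, identify $\pi^{-1}(\mathcal T^\circ)$ as an irreducible bundle, and then claim that $\dim\pi^{-1}(\mathcal T\setminus\mathcal T^\circ)<2n-1$ so that equidimensionality (borrowed from the appendix) rules out extra components. But you do not actually carry out this boundary estimate, and you explicitly flag it as ``the main obstacle''; as written, the proof is incomplete at precisely this point. The stratification of $\mathcal T\setminus\mathcal T^\circ$ is not simple---it is governed by which products $s_it_j$ vanish, and the rank of the linear system in the first-row variables can drop in several different ways---so the claimed inequality genuinely requires work, not just a gesture toward ``the base shrinks faster than the fibre grows.''

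The paper sidesteps the entire stratification by proving $\VV(G_n)=\mathrm{im}(\hat\beta)$ \emph{pointwise}: given any skew-symmetric matrix $A\in\VV(G_n)$, one first solves $s_it_j=a_{ij}$ for $i>1$ using only the $2\times 2$ constraints (the ladder structure), arranging that $s_i$ and $t_i$ do not simultaneously vanish, and then checks that the $3\times 3$ constraints force the first-row system $s_1 t_j + s_j t_1 = a_{1,j}$ to be consistent. This is more elementary, needs no dimension counting over boundary strata, and does not rely on the Cohen--Macaulay machinery from the appendix. If you want to repair your argument, the cleanest fix is to adopt this direct preimage construction rather than attempting the stratum-by-stratum bound.
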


\begin{proof}
    We will show $\langle G_n \rangle$ is prime by showing it is the kernel of a homomorphism between polynomial rings.
    While our ultimate goal is to show that $\langle G_n \rangle = \ker \widehat{\varphi_n}$, the homomorphism we use in this proof is simpler than $\widehat{\varphi_n}$.
    
    The proof is by induction on $n$. Let $S_n = \Cc[s_i,t_j ~|~ 1\leq i,j\leq n]$. Recall that $X_n$ is $\Cc[x_{ij} ~|~ 1 \leq i < j \leq n]$, and consider the ring homomorphism $\beta_n : X_n \to S_n$ defined by
    \[
        x_{ij} \mapsto 
        \begin{cases}
            s_1 t_j + s_j t_1 &\text{if } i = 1 \\
            s_i t_j &\text{if }i > 1.
        \end{cases}.
    \]
    Consider the corresponding morphism $\hat{\beta}_n: \Cc^{2n} \to \Skew_n(\Cc)$. We claim that $\overline{\mathrm{im}(\hat{\beta}_n}) = \VV(G_n)$. It can easily be checked for $n = 4$ and $5$ that equality holds. Indeed, the fact that the kernel of $\beta_n$ is $\langle G_n \rangle$ for $n=4$ and $5$ can be checked immediately using elimination theory and your favorite computer algebra system.
    Moreover, $\VV(G_n) \supseteq \overline{\mathrm{im}(\hat{\beta}_n)}$ since $\beta_n(G_n) = 0$ for all $n$.

    For the reverse inclusion, note that the image of $\hat{\beta}_n$ is a constructible set by Chevalley's Theorem; thus, the Zariski and Euclidean closures of $\mathrm{im}(\hat{\beta}_n)$ actually coincide (see \cite[Corollary 1, Section I.10]{mumfordRedBook}). In fact, for any (non-empty) Zariski open set $\mathcal{U} \subset \Cc^{2n}$, we have the equality $\overline{\hat{\beta}_n(\mathcal{U})} = \overline{\mathrm{im}(\hat{\beta}_n)}$. With this in mind, set 
    \[
        \mathcal{U}_n = \{(s,t) \in \Cc^{n}\times\Cc^{n} ~|~ s_i,t_j \neq 0 \text{ and } \det(\hat{\beta}_n(s,t)_{\{1,2\},\{3,4\}}) \neq 0\}.
    \]
    Suppose $n \geq 6$, $A = (a_{i,j}) \in \VV(G_n)$, and assume that $\VV(G_{n-1}) = \overline{\mathrm{im}(\hat{\beta}_{n-1})}$. Let $\tilde{A}$ be the submatrix of $A$ obtained by deleting the last row and column of $A$. Since $A \in \VV(G_n)$, it is clear that $\tilde{A} \in \VV(G_{n-1})$. Thus, there is a sequence of matrices $\tilde{A}_k \in \hat{\beta}_{n-1}(\mathcal{U}_{n-1}) \subseteq
    \mathrm{im}(\hat{\beta}_{n-1})$ of the form below which converge to $\tilde{A}$:
    \[
        \tilde{A}_k = (a_{i,j}^{(k)}) = \hat{\beta}_{n-1}(s_1^{(k)},\dotsc,s_{n-1}^{(k)}, t_1^{(k)},\dotsc,t_{n-1}^{(k)}) \text{ such that } (s^{(k)},t^{(k)}) \in \mathcal{U}_{n-1}.
    \]
    Our goal is find 2 additional sequences, $s_n^{(k)}$ and $t_n^{(k)}$, so that 
    \[
        \lim_{k \to \infty} \hat{\beta}_n(s_1^{(k)}, \dotsc, s_n^{(k)}, t_1^{(k)}, \dotsc, t_n^{(k)}) = A.
    \]
    We claim that the following to sequences will do the trick.
    \begin{align*}
        t_n^{(k)} &= \frac{a_{2,n}}{s_{2}^{(k)}} \\
        s_n^{(k)} &= \frac{a_{1,n} - s_{1}^{(k)}t_n^{(k)}}{t_{1}^{(k)}}
    \end{align*}
    Note that since $(s^{(k)}_1,\dots,t^{(k)}_{n-1}) \in \mathcal{U}_{n-1}$, these sequences are well-defined. Moreover, by definition of these sequences, we see that 
    \begin{align*}
        \lim_{k\to\infty} s_1^{(k)}t_{n}^{(k)} + s_n^{(k)}t_1^{(k)} &= a_{1,n},\\
        \lim_{k\to\infty} s_2^{(k)}t_{n}^{(k)} &= a_{2,n}.
    \end{align*}
    It remains to check that $\lim_{k\to\infty} s_{i}^{(k)} t_n^{(k)} = a_{i,n}$ for all $3 \leq i \leq n-1$. We consider the case when $3 \leq i \leq n-2$ first. Since $A \in \VV(G_n)$, we know that $a_{i,n} a_{2,n-1} - a_{2,n} a_{i,n-1} = 0$, and as $a_{2,n-1}^{(k)} \to a_{2,n-1}$ and $a_{i,n-1}^{(k)} \to a_{i,n-1}$ as $k \to \infty$, for any $\epsilon > 0$, there exists a $K$ so that for all $k \geq K$, we have that 
    \[
        \left|a_{i,n} - \frac{a_{2,n}a_{i,n-1}^{(k)}}{a_{2,n-1}^{(k)}} \right| < \epsilon.
    \]
    Now, for $k \geq K$, we have the following.
    \begin{align*}
        \left|s_i^{(k)} t_{n}^{(k)} -  a_{i,n}\right| &\leq \left|s_i^{(k)} t_{n}^{(k)} - \frac{a_{2,n}a_{i,n-1}^{(k)}}{a_{2,n-1}^{(k)}}\right| + \left|\frac{a_{2,n}a_{i,n-1}^{(k)}}{a_{2,n-1}^{(k)}} - a_{i,n} \right| \\
        &< \left|s_i^{(k)} t_{n}^{(k)} - \frac{a_{2,n}a_{i,n-1}^{(k)}}{a_{2,n-1}^{(k)}}\right| + \epsilon \\
        &= \left|s_i^{(k)} \frac{a_{2,n}}{s_{2}^{(k)}} - \frac{a_{2,n}a_{i,n-1}^{(k)}}{a_{2,n-1}^{(k)}}\right| + \epsilon \\
        &= |a_{2,n}| \left|\frac{s_i^{(k)}}{s_2^{(k)}} - \frac{s_i^{(k)}}{s_2^{(k)}} \right| + \epsilon \\
        &= \epsilon
    \end{align*}
    Hence, $\lim_{k \to \infty} s_i^{(k)} t_n^{(k)} = a_{i,n}$ for $3 \leq i \leq n-2$.

    Now, we turn our attention to the case when $i = n-1$. There are no $2\times 2$ minors involving $a_{n-1,n}$, so we must use the relation $\det(A_{\{1,2,n\},\{3,4,n-1\}}) = 0$. As in the previous case, we have that for any $\epsilon > 0$, there is a $K$ so that for all $k \geq K$, we have
    \[
    \left|a_{n-1,n} - \frac{a_{1,n-1}^{(k)}a_{2,4}^{(k)}a_{3,n}^{(k)}-a_{1,4}^{(k)}a_{2,n-1}^{(k)}a_{3,n}^{(k)}-a_{1,n-1}^{(k)}a_{2,3}^{(k)}a_{4,n}^{(k)}+a_{1,3}^{(k)}a_{2,n-1}^{(k)}a_{4,n}^{(k)}}{a_{1,3}^{(k)}a_{2,4}^{(k)} - a_{1,4}^{(k)}a_{2,3}^{(k)}} \right| < \epsilon.
    \]
    Note that the denominator never vanishes since $(s_1^{(k)},\dotsc,t_{n-1}^{(k)}) \in \mathcal{U}_{n-1}$. Consider the following when $k \geq K$.
    \begin{align*}
        \left| s_{n-1}^{(k)} t_n^{(k)} - a_{n-1,n} \right| &\leq \left|s_{n-1}^{(k)} t_n^{(k)}- \frac{a_{1,n-1}^{(k)}a_{2,4}^{(k)}a_{3,n}^{(k)}-a_{1,4}^{(k)}a_{2,n-1}^{(k)}a_{3,n}^{(k)}-a_{1,n-1}^{(k)}a_{2,3}^{(k)}a_{4,n}^{(k)}+a_{1,3}^{(k)}a_{2,n-1}^{(k)}a_{4,n}^{(k)}}{a_{1,3}^{(k)}a_{2,4}^{(k)} - a_{1,4}^{(k)}a_{2,3}^{(k)}} \right| \\&+ \left| \frac{a_{1,n-1}^{(k)}a_{2,4}^{(k)}a_{3,n}^{(k)}-a_{1,4}^{(k)}a_{2,n-1}^{(k)}a_{3,n}^{(k)}-a_{1,n-1}^{(k)}a_{2,3}^{(k)}a_{4,n}^{(k)}+a_{1,3}^{(k)}a_{2,n-1}^{(k)}a_{4,n}^{(k)}}{a_{1,3}^{(k)}a_{2,4}^{(k)} - a_{1,4}^{(k)}a_{2,3}^{(k)}} -a_{n-1,n}\right| \\
        &<\left|s_{n-1}^{(k)} t_n^{(k)}- \frac{a_{1,n-1}^{(k)}a_{2,4}^{(k)}a_{3,n}^{(k)}-a_{1,4}^{(k)}a_{2,n-1}^{(k)}a_{3,n}^{(k)}-a_{1,n-1}^{(k)}a_{2,3}^{(k)}a_{4,n}^{(k)}+a_{1,3}^{(k)}a_{2,n-1}^{(k)}a_{4,n}^{(k)}}{a_{1,3}^{(k)}a_{2,4}^{(k)} - a_{1,4}^{(k)}a_{2,3}^{(k)}} \right| + \epsilon
    \end{align*}
    After substituting $a_{i,j}^{(k)} = s_i^{(k)} t_j^{(k)}$ when $1 < i < j$ and $a_{1,j}^{(k)} = s_1^{(k)} t_j^{(k)} + s_j^{(k)} t_1^{(k)}$, we find that the first term in the last line above is identically 0. We see that for $k \geq K$ that $|s_{n-1}^{(k)} t_n^{(k)} - a_{n-1,n}| < \epsilon$; hence, 
    \[
        \lim_{k \to \infty} s_{n-1}^{(k)} t_n^{(k)} = a_{n-1,n}.
    \]

    The above shows that the sequence of matrices
    \[
        A_k = \hat{\beta}_n(s_1^{(k)}, \dotsc, s_{n-1}^{(k)},s_n^{(k)}, t_1^{(k)}, \dotsc, t_{n-1}^{(k)}, t_n^{(k)})
    \]
    converges point-wise to $A$; hence, $A$ lies in the closure of $\mathrm{im}(\hat{\beta}_n)$. This completes the proof that the closure of the image of  $\hat{\beta}_n$ is equal to $\VV(G_n)$.
    
    Since $\VV(G_n) = \overline{\mathrm{im}(\hat{\beta}_n)}$, we see that $\ker \beta_n$ and $\langle G_n \rangle$ have the same radical. Note that $\ker \beta_n$ is radical as it is prime and $\langle G_n \rangle$ is radical by \cref{lemma:radical}. Hence, $\ker \beta_n = \langle G_n \rangle$, completing the proof.
\end{proof}

\begin{proposition}
\label{prop: size of facet}
 For all $n \geq 5$ , $\dim(\langle G_n\rangle) = 2n - 1$. When $n = 4$, $\dim(\langle G_4 \rangle) = 6$.
\end{proposition}

\begin{proof}
When $n = 4$, $\langle G_4 \rangle = 0$, and $\dim(\langle G_n\rangle) = 6$.
When $n = 5$, $\langle G_5 \rangle =\langle x_{2,4}x_{3,5} - x_{2,5}x_{3,4}\rangle$. In particular, the height of $\langle G_5\rangle $ is 1, so the dimension is $9 = 2(5) - 1$ as there are 10 variables.

Now suppose $n \geq 6$.
By \cref{cor:GB} the minors in $G_n$ form a Gr\"obner basis, thus  $\text{in}_{>}(\langle G_n \rangle)$ is generated by the monomials
\begin{align*}
    x_{i,\ell}x_{j,k} &\quad\text{ where } 1 <i <j <k <\ell \leq n \\
    x_{1,j_1}x_{i_2,j_2}x_{j_3,i_3} &\quad\text{ where } 1 < i_2 < j_1 < j_2 < j_3 < i_3 \leq n.
\end{align*}
In particular, we have for the first monomial that $1 < i < \ell-2$ and for the second that $4 \leq j_2 \leq n-2$. Let $S$ be the following subset of generators of $X_n$,
$$ S = \{x_{1,2}, x_{1,n-2}, x_{1, n-1}, x_{1, n}\} \cup \{x_{i, i+1}\,|\,i=2,\ldots,n-1\}\cup\{x_{i, i+2}\,|\, i=2\ldots, n-2\}.$$
We claim that $S$ is independent modulo $\text{in}_{>}(I)$ (i.e. $\mathbb{C}
[S]\cap\text{in}_{>}(I) = \{0\}$). Observe that $x_{1, j_1} \not\in U$ for $3\leq j_1 \leq n-3$ so $x_{1,j_1}x_{i_2,j_2}x_{j_3,i_3} \not\in \mathbb{C}[S]$. Similarly, $x_{i,\ell} \not\in S$ for all $1 < i < \ell-2$ so $x_{i,\ell}x_{j,k} \not\in \mathbb{C}[S]$. Since $\text{in}_{>}(I)$ is monomial, the claim is proven which implies that
\[
\dim(\langle G_n\rangle) \geq |S| = 4 + n-2 + n-3 = 2n - 1. 
\]
Next, we show that $S$ is maximal with respect to the independence property. Since $\langle G_n \rangle$ is prime, all maximally independent subsets modulo $\text{in}_{>}(I)$ have the same cardinality, and this is equal to the dimension of $\mathcal{V}(G_n)$. Thus, to prove the result, it is sufficient to show that $S$ is maximal.
Consider $x_{ij} \not\in S$, so that $i > 1$ and $j > i+2$. Then the $2\times 2$ minor $x_{i,i+2}x_{i+1,j} - x_{i,j}x_{i+1,i+2} \in G_n$ and has initial term $x_{i,j}x_{i+1,i+2} \in \text{in}_{>}(\langle G_n\rangle)$. Since $x_{i+1,i+2} \in S$, we have that $S$ is maximally independent modulo $\text{in}_{>}(\langle G_n\rangle)$.
\end{proof}

By \cref{prop:PrimeDimension}, it now follows that $I_n = \langle G_n\rangle$ proving \cref{thm:main}.

\subsection{Quadratic Generation in Fourier Coordinates}
\label{sec:quadratic generation}

As discussed in \cref{sec:pfaffian}, \cref{prop: size of facet} and \cref{prop:sunletDimension} yield the following two corollaries, which provide complete generating sets for $I_n$ and $J_n$, respectively. 

\begin{corollary}
    The set of $2\times 2$ and $3 \times 3$ minors $G_n$ forms a Gr\"obner basis for $I_n = \ker(\widehat{\varphi_n})$. 
\end{corollary}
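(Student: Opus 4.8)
The final statement is the corollary that $G_n$ forms a Gröbner basis for $I_n = \ker(\widehat{\varphi_n})$. The plan is to combine the pieces that the section has already assembled. First I would invoke \Cref{thm:gensofkeralpha}, which establishes the containment $\langle G_n \rangle \subseteq I_n$, so that $\VV(I_n) \subseteq \VV(\langle G_n \rangle)$. Next I would use the immediately preceding proposition, which shows $\langle G_n \rangle$ is prime (in particular radical and equal to a genuine vanishing ideal $\ci(\VV(G_n))$), together with the dimension count showing $\dim(\langle G_n \rangle) = 2n-1$ for $n \geq 5$. On the other side, \Cref{prop:sunletDimension} gives $\dim \VV(I_\mathcal{N}) = 2n-1$ as a projective variety, and since $\VV(I_n)$ is the affine open patch $q_{0\ldots 0} \neq 0$ of (the affine cone over) this projective variety, its dimension is also $2n-1$; moreover $I_n = \ker(\widehat{\varphi_n})$ is prime since it is the kernel of a ring homomorphism into a domain. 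So $\langle G_n \rangle \subseteq I_n$, both are prime, and both have dimension $2n-1$.

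With those facts in hand, \Cref{prop:PrimeDimension} applies verbatim: two prime ideals with one contained in the other and of equal dimension must coincide, hence $\langle G_n \rangle = I_n$. Finally, \Cref{cor:GB} (the Eleven-to-Infinity Theorem) states that $G_n$ is a Gröbner basis for the ideal it generates with respect to the specified lexicographic order, for all $n \geq 6$; for $n = 4, 5$ this is covered by \Cref{lem:gb4-11}. Combining this with the equality $\langle G_n \rangle = I_n$ just established yields that $G_n$ is a Gröbner basis for $I_n$ itself, which is the assertion of the corollary.

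Since every ingredient is already proved in the excerpt, there is no real obstacle here — the corollary is purely a bookkeeping assembly of \Cref{thm:gensofkeralpha} (containment), the primality proposition, the dimension proposition, \Cref{prop:sunletDimension}/\Cref{rem:dimension} (matching dimension on the $I_n$ side), \Cref{prop:PrimeDimension} (equality of ideals), and \Cref{cor:GB} together with \Cref{lem:gb4-11} (Gröbner basis property). If I wanted to be careful about one point, it would be confirming that passing from the projective variety $\VV(I_\mathcal{N})$ to the affine patch $\VV(I_n)$ genuinely preserves dimension and primality — but this is exactly the content of \cref{prop:FactorThroughPfaffian} and \Cref{rem:dimension}, which identify $\VV(I_n)$ with the affine chart $q_{0\ldots0} \neq 0$ and record the dimension bound, so nothing new needs to be argued.
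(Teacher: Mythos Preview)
Your proposal is correct and follows essentially the same route as the paper: the corollary is stated immediately after the primality proposition, and the paper explicitly says it follows from that proposition together with \Cref{rem:dimension} and \Cref{prop:sunletDimension} (implicitly also invoking \Cref{thm:gensofkeralpha}, the dimension proposition for $\langle G_n\rangle$, \Cref{prop:PrimeDimension}, and \Cref{cor:GB}/\Cref{lem:gb4-11}), which is exactly the assembly you describe. Your enumeration of the ingredients is in fact more explicit than the paper's one-line justification.
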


\begin{corollary}\label{cor: ideal in fourier coords}
    Let $J_n = \ker((\varphi_n)_0)$ and $I_n = \langle G_n \rangle$. For any minor $m \in G_n$, let $\tilde{m} \in Q_n$ be the polynomial obtained by replacing each $x_{ij}$ with $q_{e_i + e_j}/q_\mathbf{0}$ and let $\widetilde{G_n} = \{\widetilde{m} ~|~ m \in G_n\}$. Then $J_n = \langle \widetilde{G_n} \rangle + \ker(\psi_n)$. 
\end{corollary}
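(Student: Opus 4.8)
The plan is to deduce this purely formally from the factorization $\varphi_n = \widehat{\varphi_n}\circ\psi_n$ of \cref{prop:FactorThroughPfaffian} together with the surjectivity of $\psi_n$. First I would record that
\[
J_n = \ker\varphi_n = \psi_n^{-1}\bigl(\ker\widehat{\varphi_n}\bigr) = \psi_n^{-1}(I_n),
\]
which is immediate: by $\varphi_n = \widehat{\varphi_n}\circ\psi_n$, a polynomial $f \in Q_n$ lies in $J_n$ exactly when $\psi_n(f) \in \ker\widehat{\varphi_n} = I_n$. I would also recall from the proof of \cref{prop:FactorThroughPfaffian} that $\psi_n$ is surjective (each generator $x_{ij}$ of $X_n$ equals $\psi_n(2q_{e_i+e_j})$), and that $\ker\psi_n$ is precisely the ideal of relations among the sub-Pfaffians of the generic skew-symmetric matrix $\Omega^n$, pulled back along $q_g \leftrightarrow \mathrm{Pf}(\Omega^n_g)$.

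Next I would check that for every $g \in G_n$ the pullback $\tilde g$ maps under $\psi_n$ to a nonzero scalar multiple of $g$. Since $\psi_n(q_{e_i+e_j}) = \tfrac12\mathrm{Pf}(\Omega^n_{\{i,j\}}) = \tfrac12 x_{ij}$, and every element of $G_n$ is a homogeneous polynomial in the variables $x_{ij}$ — degree $2$ for the $2\times2$ minors, degree $3$ for the $3\times3$ minors — substituting $x_{ij}\mapsto q_{e_i+e_j}$ and then applying $\psi_n$ scales each monomial of $g$ by the common factor $2^{-\deg g}$; hence $\psi_n(\tilde g) = 2^{-\deg g}\,g$. (Here $q_{e_i+e_j}$ is a legitimate variable of $Q_n$ because $e_i+e_j$ has coordinate sum $0$ in $\zmodtwo$.) In particular $\langle\,\psi_n(\tilde g) \mid g\in G_n\,\rangle = \langle G_n\rangle = I_n$.

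Finally I would invoke the elementary fact that for a \emph{surjective} ring homomorphism $\psi\colon R\to S$ and an ideal $I = \langle h_1,\dots,h_m\rangle$ of $S$, any choice of preimages $\tilde h_t$ with $\psi(\tilde h_t)=h_t$ satisfies $\psi^{-1}(I) = \langle\tilde h_1,\dots,\tilde h_m\rangle + \ker\psi$: the inclusion $\supseteq$ is clear, and for $\subseteq$ one writes $\psi(f) = \sum_t a_t h_t$, lifts each $a_t$ to $R$ by surjectivity, and subtracts to land in $\ker\psi$. Applying this with the $h_t$ ranging over $\{\psi_n(\tilde g) = 2^{-\deg g}g \mid g \in G_n\}$ (a generating set of $I_n$, since rescaling by units does not change the ideal) and $\tilde h_t = \tilde g$ yields $J_n = \psi_n^{-1}(I_n) = \langle\tilde G_n\rangle + \ker\psi_n$, as claimed. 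I do not expect a genuine obstacle here: the substance is entirely in the earlier identification $I_n = \langle G_n\rangle$ and in the surjectivity of $\psi_n$; the only care needed is the harmless scalar bookkeeping $\psi_n(\tilde g) = 2^{-\deg g}g$ (using homogeneity of the minors and that the Pfaffian of a $2\times2$ skew block is its single off-diagonal entry) and the one-line preimage lemma.
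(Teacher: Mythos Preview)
Your proof is correct and follows essentially the same route as the paper: both reduce to $J_n=\psi_n^{-1}(I_n)$ via the factorization of \cref{prop:FactorThroughPfaffian}, then use surjectivity of $\psi_n$ to lift a generating set of $I_n$ and show the difference lands in $\ker\psi_n$. Your version is in fact slightly cleaner on the scalar bookkeeping---you note explicitly that $\psi_n(\tilde g)=2^{-\deg g}g$ using homogeneity of the minors, whereas the paper's proof silently switches to the substitution $x_{ij}\mapsto 2q_{e_i+e_j}$ (which differs from the statement's $x_{ij}\mapsto q_{e_i+e_j}$); since the two choices of $\tilde g$ differ by a unit, the ideal $\langle\tilde G_n\rangle$ is unaffected and both arguments go through.
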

\begin{proof}
Recall that $J_n = \ker((\varphi_n)_0)$ and by \cref{prop:FactorThroughPfaffian} $(\varphi_n)_0 = \widehat{\varphi_n} \circ \psi_n$. Thus, we have that
\[
J_n = \ker((\varphi_n)_0) = \psi_n^{-1}(\ker(\widehat{\varphi_n})) = \psi_n^{-1}(I_n). 
\]
Now observe that $\psi_n$ is essentially the identity on the coordinates $q_{e_i + e_j}/q_\mathbf{0}$ since $\psi_n(q_{e_i + e_j}/q_\mathbf{0}) = x_{ij}$. Thus, we can naturally identify any $m \in G_n$ with the polynomial $\widetilde{m} \in Q_n$ obtained by simply replacing each $x_{ij}$ with $q_{e_i + e_j}/q_\mathbf{0}$ and similarly define $\widetilde{G_n}$ to be the collection of all such $\widetilde{m}$. Now, we claim that $\psi_n^{-1}(I_n) = \langle \widetilde{G_n} \rangle + \ker(\psi_n)$. 
By definition of $\widetilde{G_n}$, we have $\psi_n(\widetilde{G_n}) \subset I_n = \langle G_n\rangle$, so clearly, $\psi_n^{-1}(I_n) \supseteq \langle \widetilde{G_n} \rangle + \ker(\psi_n)$. It remains to show that $\psi_n^{-1}(I_n) \subseteq \langle \widetilde{G_n} \rangle + \ker(\psi_n)$.

Suppose that $f \in \psi_n^{-1}(I_n)$, then $\psi_n(f) \in I_n$ and since $G_n$ generates $I_n$ it holds that $\psi_n(f) = \sum_{m \in G_n} h_m m$ for some polynomials $h_m \in X_n$. Similar to before, let $\widetilde{h_m} \in ((Q_n)_{q_{\mathbf{0}}})_0$ again be the polynomial obtained by replacing each $x_{ij}$ with $q_{e_i + e_j}/q_\mathbf{0}$ and let $\widetilde{f} = \sum_{m\in G_n} \widetilde{h_m}\widetilde{m} \in ((Q_n)_{q_\mathbf{0}})_0$. Observe that $\psi_n(f - \widetilde{f}) = 0$ by construction thus $f - \widetilde{f} \in \ker(\psi_n)$ but $\widetilde{f} \in \langle \widetilde{G_n} \rangle$ thus $f \in \langle \widetilde{G_n} \rangle + \ker(\psi_n)$. 
\end{proof}

With this corollary in mind, we can partially answer Conjecture 6.1 from \cite{cummings2024invariants} which states that in Fourier coordinates, the vanishing ideal for the $n$-sunlet network in $\Pp^{2^{n-1}-1}$ is generated by quadratics. Here we show that the statistically relevant affine open patch (in Fourier coordinates) is generated by quadratics; therefore, up to homogenizing by $q_{\mathbf{0}}$, the vanishing ideal of the projective variety $\cs_n$ is generated by quadratics as well.

\begin{lemma}\label{lem: pfaffian relations}
    The kernel of $\psi_n : Q_n \to X_n$ is generated by quadratics. 
\end{lemma}

\begin{proof}
    We claim that $\ker \psi_n$ is generated by quadratic polynomials of the form
    \[
        f_{\mathbf{g}} = \frac{q_{\mathbf{g}}}{q_\mathbf{0}} - \sum_{j=2}^{2k} (-1)^j \frac{q_{e_{i_1} + e_{i_j}}q_{\mathbf{g} + e_{i_1} + e_{i_j}}}{q_\mathbf{0}^2}
    \]
    where $\mathbf{g} = \sum_{j=1}^{2k} e_{i_j}$ and $k \geq 2$. By \cref{prop: pfaffian laplace expansion}, these relations are all in the kernel of $\psi_n$. 

    Suppose $\psi_n(F) = 0$ for some $F\in Q_n$. 
    For each appearance of $q_\mathbf{g}/q_\mathbf{0}$ (where $\mathbf{g} = \sum_{j=1}^{2k} e_{i_j}$ and $k \geq 2)$ divide $F$ by $f_\mathbf{g}$. This process can be iterated until $F$ is a polynomial in only the $q_{e_i + e_j}/q_{\mathbf{0}}$'s. Let the resulting polynomial be $G$. This has the effect of replacing every such $q_\mathbf{g}/q_\mathbf{0}$ by its Pfaffian expansion in terms the $q_{e_i + e_j}/q_\mathbf{0}$'s. Note that since $\psi_n$ is an isomorphism when restricting to these variables and $\psi_n(G) = 0$, it follows that $G = 0$. In particular, $F \in \langle f_\mathbf{g} ~|~ \mathbf{g} = \sum_{j=1}^{2k} e_{i_j},\, k\geq 2\rangle$.
\end{proof}

\begin{proposition}
    $J_n$ is generated by quadratics.
\end{proposition}

\begin{proof}

    For $1 < i_2 < j_1 < j_2 < j_3 < i_3 \leq n$, set $f_{\{1,i_2,i_3\},\{j_1,j_2,j_3\}}$ to be
    \[
        \frac{q_{e_1 + e_{j_1}} q_{e_{i_2} + e_{j_2} + e_{j_3} + e_{i_3}} -
        q_{e_1 + e_{j_2}} q_{e_{i_2} + e_{j_1} + e_{j_3} + e_{i_3}} + 
        q_{e_1 + e_{j_3}} q_{e_{i_2} + e_{j_1} + e_{j_2} + e_{i_3}} - 
        q_{e_{i_2} + e_{i_3}} q_{e_1 + e_{j_1} + e_{j_2} + e_{j_3}}}{q_\mathbf{0}^2}.
    \]
    We claim that $-\psi_n(f_{\{1,i_2,i_3\},\{j_1,j_2,j_3\}}) = \det(\Omega_{\{1,i_2,i_3\},\{j_1,j_2,j_3\}}^n)$. Indeed, we have the following.
    \begin{align*}
        -\psi_n\left(\frac{q_{e_1 + e_{j_1}} q_{e_{i_2} + e_{j_2} + e_{j_3} + e_{i_3}}}{q_\mathbf{0}^2}\right) &= -x_{1j_1}x_{i_2i_3}x_{j_2j_3}+x_{1j_1}x_{i_2j_3}x_{j_2i_3}-x_{1j_1}x_{i_2j_2}x_{j_3i_3} \\
        \psi_n\left(\frac{q_{e_1 + e_{j_2}} q_{e_{i_2} + e_{j_1} + e_{j_3} + e_{i_3}}}{q_\mathbf{0}^2}\right) &= x_{1j_2}x_{i_2i_3}x_{j_1j_3}-x_{1j_2}x_{i_2j_3}x_{j_1i_3}+x_{1j_2}x_{i_2j_1}x_{j_3i_3}\\
        -\psi_n\left(\frac{q_{e_1 + e_{j_3}} q_{e_{i_2} + e_{j_1} + e_{j_2} + e_{i_3}}}{q_\mathbf{0}^2}\right) &= -x_{1j_3}x_{i_2i_3}x_{j_1j_2}+x_{1j_3}x_{i_2j_2}x_{j_1i_3}-x_{1j_3}x_{i_2j_1}x_{j_2i_3}\\
        \psi_n\left(\frac{q_{e_{i_2} + e_{i_3}} q_{e_1 + e_{j_1} + e_{j_2} + e_{j_3}}}{q_\mathbf{0}^2}\right) &= x_{1j_3}x_{i_2i_3}x_{j_1j_2}-x_{1j_2}x_{i_2i_3}x_{j_1j_3}+x_{1j_1}x_{i_2i_3}x_{j_2j_3}
    \end{align*}
    Note that if we take the sum of the expressions on the right, the first term in each of the first three polynomials cancels with a term in the last expression. The resulting polynomial is exactly $\det(\Omega_{\{1,i_2,i_3\},\{j_1,j_2,j_3\}}^n)$.
    
    For $1 < i < j < k < \ell\leq n$, set $f_{\{i,j\},\{k,\ell\}}$ to be
    \[
        \frac{q_{e_i + e_k} q_{e_j + e_\ell} - q_{e_i + e_\ell} q_{e_j + e_k}}{q_\mathbf{0}^2}.
    \]
    By inspection, we see $ \psi_n(f_{\{i,j\},\{k,\ell\}}) = \det(\Omega_{\{i,j\},\{k,\ell\}}^n)$. Together with \cref{lem: pfaffian relations}, this shows that $J_n$ from \cref{cor: ideal in fourier coords} is generated by quadratics in the variables $q_\mathbf{g}/q_\mathbf{0}$.
\end{proof}

\section{Inferring Phylogenetic Networks in Pfaffian Coordinates}
\label{sec:inference}

 In this section, we demonstrate how to use our results to infer leaf-labeled sunlet networks from aligned DNA sequence data. First, however, we begin with an identifiability result for sunlet networks using the Gr\"obner basis described in the previous section.

\subsection{Identifiability of Sunlets}
\begin{proposition}
\label{prop:ident}
    For $n \geq 6$ let $\mathcal{N}_1$ and $\mathcal{N}_2$ be two distinct $n$-sunlet networks under the CFN model, with corresponding phylogenetic network varieties $V_{\mathcal{N}_1}$ and $V_{\mathcal{N}_2}$. Then $V_{\mathcal{N}_1} \not\subset V_{\mathcal{N}_2}$  and $V_{\mathcal{N}_2} \not\subset V_{\mathcal{N}_1}$.
\end{proposition}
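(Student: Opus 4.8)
The plan is to deduce the statement from the equidimensionality and irreducibility of the two varieties, and then to separate them using the explicit generators of $I_{\cs_n}$ produced in the previous section.

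First I would reduce to showing $V_{\mathcal{N}_1}\neq V_{\mathcal{N}_2}$. Each $V_{\mathcal{N}_i}$ is irreducible, being the Zariski closure of the image of the polynomial parameterization $\varphi_{\mathcal{N}_i}$ on the irreducible space $\prod_e\Pp^1$, and by \cref{prop:sunletDimension} its projective dimension is $2n-1$. A proper irreducible closed subvariety of an irreducible variety has strictly smaller dimension, so $V_{\mathcal{N}_1}\subseteq V_{\mathcal{N}_2}$ would force $V_{\mathcal{N}_1}=V_{\mathcal{N}_2}$, and symmetrically; hence both non-containments are equivalent to $V_{\mathcal{N}_1}\neq V_{\mathcal{N}_2}$, i.e. $I_{\mathcal{N}_1}\neq I_{\mathcal{N}_2}$. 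After relabeling leaves we may take $\mathcal{N}_1=\cs_n$. A leaf permutation $\sigma\in S_n$ acts on $\Pp^{2^{n-1}-1}$ by $q_g\mapsto q_{\sigma\cdot g}$, with $\sigma\cdot V_{\cs_n}=V_{\sigma\cdot\cs_n}$ and $\sigma\cdot I_{\cs_n}=I_{\sigma\cdot\cs_n}$, and every $n$-sunlet arises as $\sigma\cdot\cs_n$ for some $\sigma$. The only nontrivial graph automorphism of $\cs_n$ fixing the reticulation vertex is the reflection $\rho$ of the cycle ($\rho(j)=n+2-j$ for $2\le j\le n$), and $\rho$ induces a symmetry of the model because it merely interchanges the two reticulation edges, i.e. sends $\lambda\mapsto 1-\lambda$; so $\mathcal{N}_1\neq\mathcal{N}_2$ as marked semi-directed networks amounts to $\mathcal{N}_2=\sigma\cdot\cs_n$ with $\sigma\notin\{\mathrm{id},\rho\}$, and it suffices to show that $I_{\cs_n}$ is not $\sigma$-invariant for such $\sigma$.

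Next I would exhibit a concrete separating invariant. By \cref{cor: ideal in fourier coords}, $I_{\cs_n}$ is generated in Fourier coordinates by $\widetilde{G_n}\cup\ker\psi_n$, where $\widetilde{G_n}$ comes from the quadratic minors $\det(\Omega^n_{\{i,j\},\{k,\ell\}})$ ($1<i<j<k<\ell$) and the cubic minors $\det(\Omega^n_{\{1,i_2,i_3\},\{j_1,j_2,j_3\}})$ ($1<i_2<j_1<j_2<j_3<i_3$) of \cref{thm:gensofkeralpha}. The quadratic ones are exactly the $2\times 2$ flattening minors of the caterpillar tree obtained by deleting the reticulation from $\cs_n$, so the index quadruples whose binomial lies in $I_{\cs_n}$ recover that caterpillar, and hence the linear order of the $n-1$ non-reticulation leaves up to its reflection; the cubic generators involve the reticulation leaf $1$, and for $n\ge 6$ the index patterns whose cubic lies in $I_{\cs_n}$ then pin down the full circular arrangement of all $n$ leaves together with its marked vertex, again only up to $\rho$. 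Thus for a given $\sigma\notin\{\mathrm{id},\rho\}$ one picks a generator $g\in G_n$ — typically a cubic whose relabeled row and column sets $\sigma\cdot\{1,i_2,i_3\}$, $\sigma\cdot\{j_1,j_2,j_3\}$ no longer interleave as required — and verifies $\sigma\cdot\widetilde g\notin I_{\cs_n}$ by reducing $\sigma\cdot\widetilde g$ modulo the Gr\"obner basis $G_n$ of \cref{cor:GB} (equivalently, by evaluating it at one explicit point of $V_{\cs_n}$ obtained from $\varphi_n$ with generic parameter values). Since $\sigma\cdot\widetilde g\in I_{\mathcal{N}_2}$, this yields $V_{\mathcal{N}_1}\not\subseteq V_{\mathcal{N}_2}$, and by symmetry of the argument the reverse non-containment.

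The main obstacle is the combinatorial bookkeeping inside this last step: one must show that for $n\ge 6$ the data of which minors lie in $I_{\cs_n}$ determines the labeled sunlet up to $\rho$, i.e. that \emph{every} non-symmetry $\sigma$ really does produce a generator outside $I_{\cs_n}$, and to do this uniformly in $n$. This is exactly where the hypothesis $n\ge 6$ is used — for $n=5$ the set $G_5$ consists of a single quadratic minor and no cubic, so the circular order of the leaves is not recoverable from the minors alone and the argument genuinely breaks down. I expect the smoothest implementation is the evaluation-based one: fix once and for all a sufficiently generic specialization of the $2n$ edge parameters, so that "$\sigma\cdot\widetilde g\notin I_{\cs_n}$" reduces to the non-vanishing of an explicit product or difference of those parameters, and then compress the case analysis on $\sigma$ by first conjugating by the available dihedral symmetry so that only a few representative $\sigma$ need to be treated.
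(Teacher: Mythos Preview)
Your strategy---pass to the affine patch, use the explicit generators $G_n$, and show that a nontrivial leaf permutation $\sigma$ moves some generator out of the ideal---is exactly the paper's. Your preliminary reduction via irreducibility and equidimensionality (so that it suffices to prove $I_{\mathcal{N}_1}\neq I_{\mathcal{N}_2}$ rather than both non-containments) is a clean simplification the paper does not bother with; the paper instead exhibits an element of $I_{\mathcal{N}_2}\setminus I_{\mathcal{N}_1}$ directly and invokes symmetry for the other direction. One small correction: \cref{cor: ideal in fourier coords} describes $J_n$, not the projective ideal $I_{\cs_n}$, and $G_n$ is a Gr\"obner basis for $I_n\subset X_n$, not for $J_n$; since both you and the paper immediately restrict to the affine patch this is harmless, but you should phrase the argument in $X_n/I_n$ throughout.

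The genuine gap is that you do not carry out the separating step; you label it ``the main obstacle'' and propose an evaluation-based case analysis after conjugating by the dihedral symmetry, but never execute it. The paper's execution is much shorter than your sketch anticipates and avoids evaluation altogether. Since $\sigma\neq\mathrm{id}$ there exist $i<j$ with $\sigma(i)>\sigma(j)$. If $i=1$, apply $\sigma$ to the single cubic minor $\det(\Omega^n_{\{1,2,6\},\{3,4,5\}})$: the resulting $3\times3$ minor no longer has its first row indexed by $1$, so it is not among the generators of $I_{\cs_n}$ (and one checks it is not in the ideal). If $i>1$, apply $\sigma$ to a $2\times2$ minor such as $\det(\Omega^n_{\{i,j\},\{n-1,n\}})$: after permutation the ordering condition $i<j<k<\ell$ is violated, so again the translate lies in $I_{\mathcal{N}_2}\setminus I_{\mathcal{N}_1}$. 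Only six indices are ever used, which is exactly where $n\ge 6$ enters, and this makes the argument uniform in $n$ with no parameter specialization. Your evaluation-based plan would also succeed, but the paper's index-pattern argument is both shorter and more transparent.
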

\begin{proof}
    It is sufficient to prove the result on the affine open patch where $q_{\mathbf{0}} \neq 0$. For $i = 1,2$, let $I_i$ be the kernel of the corresponding map $\alpha_i$. Since $\mathcal{N}_1$ and $\mathcal{N}_2$ have the same underlying mixed graph and differ only by a permutation of the leaf labels, the elements of $I_i$ are given by the minors described in the previous section, with permuted rows and columns. (i.e. if $\theta\in S_n$ is a permutation of the $n$ leaves then $\theta(\det(M_{\{i,j\}\{k,l\}})) = \det(M_{\{\theta(i),\theta(j)\}\{\theta(k),\theta(l)\}}$).

    Let us assume that $\mathcal{N}_1$ is the $n$-sunlet with leaf $1$ below the reticulation and leaves $2,\ldots,n$ ordered clockwise (as in \cref{fig:4SunletAndTrees}(A)). Then $I_1$ has Gr\"obner basis $G_n$. Let $\theta \in S_n$ be the permutation which, when applied to the leaves of $\mathcal{N}_1$, gives $\mathcal{N}_2$. Then for some $i < j$ we must have $\theta(i)>\theta(j)$. If $i=1$ then $\det(M_{\{1,2,6\}\{3,4,5\}}) \in I_1$ so $\theta(\det(M_{\{1,2,6\}\{3,4,5\}})) = \det(M_{\{\theta(1),\theta(2),\theta(6)\}\{\theta(3),\theta(4),\theta(5)\}}) \in I_2$, but then $\theta(\det(M_{\{1,2,6\}\{3,4,5\}})) \not\in I_1$ since the first row of this minor is not indexed by $1$. If $i > 1$, suppose that $j < n-1$, and consider $\det(M_{\{i,j\}\{n-1,n\}})$. Then $\theta(\det(M_{\{i,j\}\{n-1,n\}}) \in I_2 \setminus I_1$. We leave the remaining cases (i.e., when $j \geq n-1$) to the reader. It follows that $I_2 \not\subset I_1$. All other cases can be obtained by symmetry.
\end{proof}

The result above tells us that for a fixed $n\geq 6$, distinct $n$-sunlets are generically identifiable from the marginal distribution of states at the leaves of the sunlet. In particular, we can determine the reticulation vertex of the sunlet network from this distribution. For $n\leq 4$ we do not have generic identifiability of $n$-sunlets, because each $n$-sunlet variety has the same dimension as the space it lives in, and thus fills up the whole space {\cite{gross2023dimensions}}.
For $n = 5$, the 5-sunlets are also not generically identifiable. Indeed, taking $\cs_5$ and swapping the leaves 4 and 5 or leaves 2 and 3 yields the same ideal.
We identify the states of the CFN model with purines (nucleotides A and G) and pyrimidines (nucleotides C and T). In the remainder of this section, our aim is to evaluate the minors of $G_n$ at a matrix formed from the empirical distributions obtained from sequence data for each possible leaf-labeling of the $n$-sunlet. Proposition \ref{prop:ident} tells us that for the true leaf-labeling, all of the minors should evaluate to 0 (although accounting for error in the empirical distribution, we expect it to lie very close to 0), and for all other leaf-labelings, the minors should not all evaluate to zero.

\subsection{Implementation and Results on Simulated Data}

We simulated multiple sequence alignments of DNA sequence data from sunlets for $n = 6,7,$ and $8$ under the Kimura 3-Parameter (K3P) evolutionary model {\cite{kimura1981estimation}}. Here, we think of this model as being the general group-based model for the Klein-4 group $\zmodtwo\times\zmodtwo$ (see e.g. {\cite{casanellas2007geometry}} for details of the K3P model in the algebraic geometry setting). For each $n$, we simulated 1,000 alignments of lengths 1,000bp, 10,000bp, and 100,000bp. Note that a multiple-sequence alignment of DNA data can be converted to a multiple-sequence alignment of purine/pyrimidine data under the usual RY-coding. When identifying the state space $\{\rm{A,G,C,T}\}$ with 
$\mathbb{Z}/2\mathbb{Z}\times\mathbb{Z}/2\mathbb{Z}$ (as is the case for the K3P model), we can think of this conversion as a projection from $\mathbb{Z}/2\mathbb{Z}\times\mathbb{Z}/2\mathbb{Z}$ onto $\mathbb{Z}/2\mathbb{Z}$. If our DNA sequence data is generated under the K3P model on a network $\mathcal{N}$, when we apply this projection to the Fourier coordinates, we are left with coordinates satisfying the CFN relations on the same network $\mathcal{N}$ (see the proof of \cite[Theorem~4.7]{hollering2021identifiability} for further details). We, therefore, have a multiple sequence alignment generated under the CFN model on the network $\mathcal{N}$. 

\begin{algorithm}
\caption{High-level overview of our algorithm to infer a sunlet network from aligned DNA sequence data. The sunlet with the lowest score is taken to be the most likely one that generated the data.}\label{alg:infer}
\KwData{Multiple DNA sequence alignment $A$  from $n$ species.}
\KwResult{Score for every $n$-sunlet}
Convert $A$ to a purine/pyrimidine alignment $A'$; \\
Calculate the empirical distribution $p$ of leaf patterns from $A'$; \\
\For{each $n$-sunlet $\mathcal{N}$}{
    $p_{\mathcal{N}} \gets p$ with indices permuted to align with $\mathcal{N}$;\\
    \For{each pair $(i,j)$ with $i < j$}{
        $q_{e_i + e_j} \gets e_i + e_j$ coordinate of Fourier transform of $p_{\mathcal{N}}$;
    }
    $\Omega_{\mathcal{N}} \gets$ matrix with $(i,j)$-entry given by $q_{e_i + e_j}$; \\
    ${\rm Scores}[\mathcal{N}] \gets 0$; \\
    \For{each minor $m \in G_n$}{
        ${\rm Scores}[\mathcal{N}] \gets {\rm Scores}[\mathcal{N}] + |m(\Omega_{\mathcal{N}})|$;
    }
  }
  Sort ${\rm Scores}$ in ascending order; \\
  \For{each $n$-sunlet $\mathcal{N}$}{
    Print $\mathcal{N}$, $\rm{Scores}[\mathcal{N}]$;
  }
\end{algorithm}

Our algorithm to infer a leaf-labeled $n$-sunlet from aligned DNA sequence data is as follows (see also Algorithm \ref{alg:infer}). First, an input of multiple DNA sequence alignment of $n$ species is converted to a multiple sequence alignment of purine/pyrimidine data. From this, an empirical distribution of observed leaf patterns is calculated for every possible leaf-labeling of the $n$-sunlet (up to network equivalence) by permuting the sequences in the alignment. Each distribution is then transformed via the discrete Fourier transform, and the matrix $\Omega^n$ is constructed. Each minor from Corollary \ref{cor: ideal in fourier coords} is then calculated, and the absolute values are summed to give a score for the leaf-labeling. Each leaf-labeling is then ranked by this score from lowest to highest, and the labeling with the lowest score is chosen as the \lq correct\rq\ labeling of the sunlet. Python scripts that implement this and simulate aligned sequence data are available at \cite{mathrepo}, and use the Python packages \texttt{scikit-bio} \cite{scikitbio} and \texttt{mpmath} \cite{mpmath}.

Tables \ref{tab:n6}, \ref{tab:n7}, and \ref{tab:n8} give the distribution of the ranking of the true network for the three alignment lengths we simulated. We observe that as sequence lengths approach $100,000$bp, the percent of datasets where the true network was identified approaches $100$\%.

\begin{table}[h!]
 \begin{center}
 \caption{\% Rankings of the true network for $n=6$ over 1,000 datasets. For each dataset, we recorded the ranking of the true network that generated the data after applying Algorithm \ref{alg:infer}. A ranking of 1 means that our algorithm correctly identifies the true network. For $n=6$, there are a total of 360 sunlet networks to test.}
  \label{tab:n6}
 \begin{tabular}{c|ccccccccccccc}
                & \multicolumn{10}{|c}{Rank of True Network}\\
	  Alignment Length & 1 & 2 & 3 & 4 & 5 & 6 & 7 & 8 & 9 & $\geq$ 10 \\ 
   \hline
	  1,000 & 64.0\% & 13.2\% & 4.5\% & 2.6\% & 5.3\% & 4.1\% & 0.5\% & 0.4\% & 1.3\% & 4.1 \% \\
	  10,000 & 95.4\% & 3.6\% & 0.2\% & 0\% & 0.6\% & 0.1\% & 0\% & 0\% & 0\% & 0.1\%  \\
	  100,000 & 99.5\% & 0.5\% & 0\% & 0\% & 0\% & 0\% & 0\% & 0\% & 0\% & 0\% 
	 \end{tabular} 
\end{center}
\end{table}

\begin{table}[h!]
 \begin{center}
 \caption{Rankings of the true network for $n=7$ over 1,000 datasets. For $n=7$, there are a total of 2520 sunlet networks to test.}
  \label{tab:n7}
 \begin{tabular}{c|ccccccccccccc}
            & \multicolumn{10}{|c}{Rank of True Network}\\
	  Alignment Length & 1 & 2 & 3 & 4 & 5 & 6 & 7 & 8 & 9 & $\geq$ 10 \\ \hline
        1,000 & 71.1\% & 17.5\% & 1.7\% & 1.9\% & 4.1\% & 1.1\% & 0\% & 0.1\% & 0.8\% & 1.7\% \\
	  10,000 & 96.2\% & 3.5\% & 0\% & 0.1\% & 0.2\% & 0\% & 0\% & 0\% & 0\% & 0\% \\
	  100,000 & 99.6\% & 0.3\% & 0\% & 0\% & 0.1\% & 0\% & 0\% & 0\% & 0\% & 0\% 
	 \end{tabular} 
\end{center}
\end{table}

\begin{table}[h!]
 \begin{center}
 \caption{Rankings of the true network for $n=8$ over 1,000 datasets. For $n=8$, there are a total of 20,160 sunlet networks to test.}
  \label{tab:n8}
 \begin{tabular}{c|ccccccccccccc}
             & \multicolumn{10}{|c}{Rank of True Network}\\
	  Alignment Length & 1 & 2 & 3 & 4 & 5 & 6 & 7 & 8 & 9 & $\geq$ 10 \\ \hline
        1,000 & 70.6\% & 18.7\% & 1.3\% & 1.7\% & 4.6\% & 0.6\% & 0.1\% & 0.2\% & 1.1\% & 1.1\% \\
	  10,000 & 96.4\% & 3.2\% & 0\% & 0\% & 0.3\% & 0.1\% & 0\% & 0\% & 0\% & 0\% \\
	  100,000 & 99.6\% & 0.4\% & 0\% & 0\% & 0\% & 0\% & 0\% & 0\% & 0\% & 0\% 
	 \end{tabular} 
\end{center}
\end{table}

Our implementation of the algorithm is parallelized so that the score for each sunlet network can be evaluated simultaneously. \cref{fig:times}.a shows the times taken for each dataset, where each dataset was analyzed with 4 CPUs so that at most 4 leaf-labelings were evaluated simultaneously. Here, in the worst case, the running time for alignments of length 100kbp for $n=8$ take under 1 hour. Using more CPUs and making further optimizations to our algorithm could decrease this further. The majority ($>50\%$) of the running time is spent performing the Fourier transformation of the leaf patterns and constructing the matrix $\Omega$.  Only a small proportion of time is spent on evaluating invariants (see \cref{fig:times}.b).

\begin{figure}[h!]
    \centering
    \includegraphics[scale=0.65]{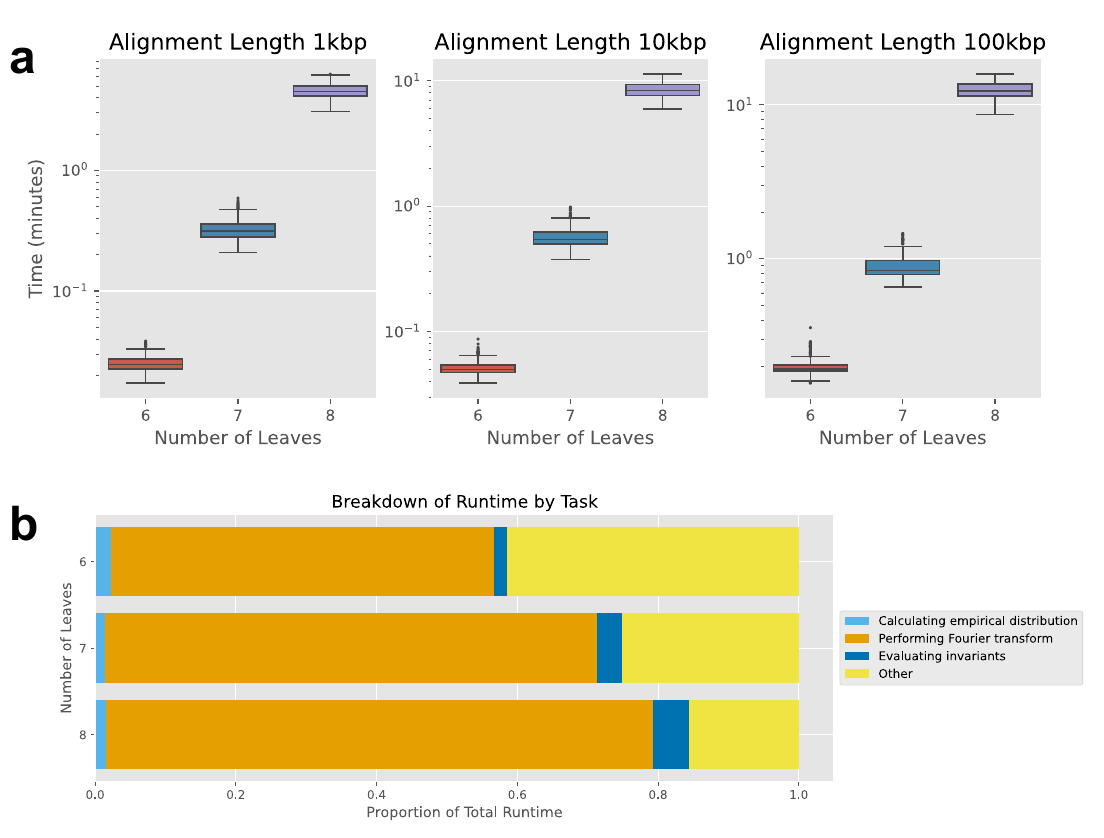}%
    \caption{{\bf a.} Box plots displaying the run times for $n=6,7,$ and $8$, split by alignment length. {\bf b.} Breakdown of run time by task over 100 alignments of length 1kbp for each $n=6,7,8$.}
    \label{fig:times}
\end{figure}%

\section{Discussion}
\label{sec:5}

We have given an explicit description of the vanishing ideal of the statistically relevant affine open patch of the variety associated to a sunlet network under the CFN substitution model. We showed that the number of coordinates needed to parameterize the model can be reduced from $2^{n-1}$ to $\binom{n}{2}$, and we have given a Gr\"{o}bner basis of the ideal in terms of certain minors of a skew-symmetric matrix $\Omega$. The entries $x_{ij}$ of $\Omega$ correspond to the Fourier coordinates $q_{e_i + e_j}$. By \cref{thm: fourier coordinates are expected values}, these Fourier coordinates are equal to $\mathbb{E}[\chi(X_i) \chi(X_j)]$. Since the odd moments are all 0, we can interpret the Fourier coordinates as \emph{covariances}. Furthermore, for any $i$, we have that $\mathrm{Var}(\chi(X_i)) = 1$ since $\chi(X_i)^2 = 1$. This leads us to a reinterpretation of \cref{thm:main}.

\begin{theorem}
    Let $U_n = \VV_{\cs_n} \setminus \VV(q_{\mathbf{0}})$ be the statistically relevant affine open patch of the CFN sunlet model. Let $\ct_0$ be the caterpillar subtree of $\cs_n$ obtained by deleting all edges incident to the reticulation vertex, the reticulation vertex itself, and leaf 1. Then:
    \begin{enumerate}
        \item [(a)] The map which sends a probability distribution $p$ to its covariance matrix $\Sigma$ is injective.
        \item [(b)] After passing to the new coordinate system, $\Sigma$ lies in the phylogenetic variety if and only if the following rank conditions hold.
        \begin{enumerate}
            \item [(i)] For every split $A|B$ of $\ct_0$, $\mathrm{rk}(\Sigma_{A,B}) \leq 1$. Note that $1 \notin A \cup B$.
            \item [(ii)] In $\cs_n$, if we delete any two non-reticulation cycle edges, we arrive at a bipartition of the leaves of $\cs_n$, which we denote as $A|B$. Note that one side of this partition contains $\{1,2,n\}$. For all such bipartitions, we have $\mathrm{rk}(\Sigma_{A,B}) \leq 2$.
        \end{enumerate}
    \end{enumerate}
\end{theorem}

\begin{proof}
    The first statement follows since all the variances are 1, so no information is gained or lost by replacing $\Omega$ with $\Sigma$. For the second statement, note that $G_n$ still defines a generating set for the vanishing ideal of these covariance matrices as the minors appearing in $G_n$ will only differ by $\pm 1$ after replacing $\Sigma$ with $\Omega$. The rank 1 conditions correspond to the $2\times 2$ minors of $G_n$, and the rank 2 conditions correspond to the vanishing of the $3\times 3$ minors in $G_n$.
\end{proof}

We mention this here since an analogous result for trees has implicitly appeared in the literature (e.g., \cite{Speyer-Sturmfels, Sturmfels2020}). In fact, writing down an analogous statement for trees is almost immediate in light of \cite{sturmfels2005toric}. When looking at the statistically relevant affine open patch for a tree $\ct$, the model is defined by $\mathrm{rk}(\Sigma_{A, B}) \leq 1$ for all splits $A|B$ of $\ct$. Therefore, it seems plausible that higher level CFN networks may also be determined by rank constraints on submatrices of $\Sigma$. As evidence, we present one level-2 network where this holds. 

\begin{figure}

    \centering
    \begin{subfigure}[c]{0.4\linewidth}
        \centering
    
            \begin{tikzpicture}[scale = 0.5]
                \newcommand*\pointsuarjz{491.2444875527841/549.4754611945466/0/1,505.66466483556593/493.35854732048256/1/,404.3896991966817/466.56233695652935/2/2,457.17407230876313/448.50521789938784/3/,429.6442869270844/350.6437426072139/4/3,477.787916913392/380.6225305792318/5/,552.7645863012605/224.210937052743/6/4,585.2895416686697/270.10353401834186/7/,667.3268753640284/191.99913815464515/8/5,652.5457406957456/246.0605865626175/9/,753.4158012619172/277.9520189019479/10/6,698.7806699437413/293.6054563545368/11/,726.5512593404143/387.2655543619151/12/7,678.1627669721956/361.0762418336222/13/,601.5817453787387/519.2655912770948/14/8,572.6070583975231/471.9167038473043/15/,550.0262732142388/343.50790274908866/16/,606.6570986151664/398.97400523199303/17/}
                \newcommand*\edgesuarjz{1/0,3/2,5/3,5/4,7/6,9/7,9/8,11/9,11/10,13/12,15/14,16/5,16/7,17/15,17/16}
                
                \newcommand*\scaleuarjz{0.02}
                
                \foreach \x/\y/\z/\w in \pointsuarjz {
                    \node (\z) at (\scaleuarjz*\x,-\scaleuarjz*\y) [circle,draw,inner sep=0pt] {$\w$};
                }        
                
                \foreach \x/\y in \edgesuarjz {
                    \draw (\x) -- (\y);
                }
                
                \draw[dashed, ->] (11) -- (13);
                \draw[dashed, ->] (17) -- (13);
                \draw[dashed, ->] (15) -- (1);
                \draw[dashed, ->] (3) -- (1);
            \end{tikzpicture}
        \end{subfigure}
        \begin{subfigure}[c]{0.4\linewidth}
            \centering 
                \resizebox{0.6\textwidth}{!}{%
                    \begin{tabular}{|c|c|c|}
                    \hline
                    Rows ($A$) & Columns ($B$) & Rank ($d$) \\ \hline
                    $\{2, 3\}$ & $\{4, 5, 6, 7, 8\}$& 1 \\ \hline
                    $\{2, 3, 4\}$& $\{5, 6, 7\}$& 1 \\ \hline
                    $\{2, 3, 4, 5\}$& $\{6, 7\}$& 1 \\ \hline
                    $\{2, 4, 5\}$& $\{3, 6, 7\}$& 2 \\ \hline
                    $\{2, 6, 7\}$& $\{3, 4, 5\}$& 2 \\ \hline
                    $\{1, 2, 3, 7, 8\}$& $\{4, 5, 6\}$& 2 \\ \hline
                    $\{1, 2, 3, 4, 8\}$& $\{5, 6, 7\}$& 2 \\ \hline
                    $\{1, 2, 3, 8\}$& $\{4, 5, 6, 7\}$& 2 \\ \hline
                    $\{1, 2, 8\}$& $\{3, 4, 5, 6, 7\}$& 2 \\ \hline
                    \end{tabular}%
            }
        \end{subfigure}
            \caption{On the left, a level-2 network with 8 leaves. On the right, a table listing rank constraints of the form $\mathrm{rk}(\Sigma_{A,B}) \leq d$.}
            \label{fig: level2example}
\end{figure}

\begin{example}
\label{ex: a level-2 example}
    The network in \cref{fig: level2example} is a level-2 network. It cannot be decomposed into sunlets and trees using the toric fiber product, and thus, does not fall into the paradigm set out in this paper. However, as we shall show below, the vanishing ideal is determined by rank constraints on the covariance matrix as in the level-1 case. 
    
    Let $I$ be the vanishing ideal for this network. First of all, one can show that for this network (and in fact all networks) that the ideal is homogeneous with respect to the multigrading defined at the end of \cref{subsec: phylogenetic invariants}. Then we would like to repeat the proof of \cref{prop:FactorThroughPfaffian} to show that the model is determined completely by covariances. As stated, this doesn't work, the Pfaffian technique fails. However, we can still try to find polynomials of a form close enough to those appearing in \cref{lemma: PfaffianExpansion} to replicate the proof of \cref{prop:FactorThroughPfaffian}. Explicitly, for each $\mathbf{g} = \sum_{j=1}^{2k} e_{i_j}$ with $k \geq 2$, we want to find homogeneous polynomials of multi-degree $(2,\mathbf{g})$ of the form below.
    \[
        q_{\mathbf{0}} q_{\mathbf{g}} + \sum_{\mathbf{h},\mathbf{k}} c_{\mathbf{h},\mathbf{k}} q_{\mathbf{h}} q_{\mathbf{k}}
    \]
    For example,
    \[q_{00000000}q_{11111111}-q_{00001100}q_{11110011}+q_{00001010}q_{11110101}-q_{00000110}q_{11111001}\]
    lies in $I$ and thus, after setting $q_{\mathbf{0}} = 1$, we can solve for $q_{11111111}$ in terms of the lower order moments. If a polynomial of this form occurs for all appropriate $\mathbf{g}$, then we can recursively find formulas for $q_{\mathbf{g}}$ in terms of the covariances $q_{e_i + e_j}$.
    
    As it turns out, one can find a polynomial of this form for every $\mathbf{g} = \sum_{j=1}^{2k} e_{i_j}$ with $k \geq 2$ for this level-2 network. 
    This computation is not too intensive since such a polynomial must be of degree $(2,\mathbf{g})$, reducing the search space. Moreover, \texttt{MultigradedImplicitization} \cite{cummings2023computing}, a Macaulay2 package, is designed to solve such problems. It finds all the desired polynomials in under a minute (including the one listed above).
    
    Once we have these polynomials in hand, we can adapt the proof of \cref{prop:FactorThroughPfaffian} to show that the model is determined by its covariance matrix $\Sigma$. After reducing the parameterization from $128$ variables to $28$, we can again use \texttt{MultigradedImplicitization} to find all generators of the ideal up to degree $4$. All the invariants found were minors of $\Sigma$, and they exactly define rank constraints which can be read off the table in \cref{fig: level2example}. Surprisingly, there are no minimal degree 4 generators of $I$.

    These computations do not guarantee that these rank constraints generate the entire kernel. To find a genuine certificate that these are a generating set, one would need to solve the implicitization problem using Gr\"obner bases. This is infeasible in this case, as there are 28 variables which are parameterized by $38$ parameters, and the expressions defining the $q_{\mathbf{g}}$'s are polynomials with 4 terms of degree 19.
    
    That being said, these rank constraints listed in \cref{fig: level2example} are almost certainly correct. The rank of the Jacobian of the parameterization of the model is 17. Macaulay2 fails to solve the whole implicitization problem; however, it is able to show that the ideal generated by these rank constraints defines a dimension 17 variety and is prime over $\Qq$. While not a proof, it is certainly good evidence that we have found a generating set for $I$.
\end{example}

Methods for inferring quartet trees directly from topology invariants were proposed in \cite{casanellas2007performance}, and more recently for small phylogenetic networks in \cite{barton2022statistical} and \cite{martin2023algebraic}. In both cases for phylogenetic networks, calculating a Gr\"obner basis from which to choose the invariants is a significant hurdle, and indeed, this limited \cite{martin2023algebraic} to looking at 4-leaf networks under the JC and K2P evolutionary models. Here, we have explicitly described the invariants of $n$-sunlets under the CFN model for all $n\geq 6$, thereby removing this difficulty for these cases. Whilst the set of invariants we give is not a full generating set for the whole projective variety, it is a full generating set for an affine open patch of the model that contains the \lq stochastic\rq\ part of the model and is therefore sufficient for use on real data. Furthermore, Proposition \ref{prop:ident} tells us that our method is statistically consistent, and in our results, we observe a convergence rate higher than that observed in \cite{martin2023algebraic}. This may be because we are \lq cutting down\rq\ the variety and only considering an affine open patch.

For each $n$-sunlet, we have $\frac{n!}{2}$ possible leaf-labelings. At the same time, the number of minors we evaluate for each leaf-labeling grows with each $n$, so we expect the running time to grow with each increase in $n$. Nonetheless, the growth of the number of minors is polynomial in $n$, which is a significant improvement on the exponential growth in the number of generators for the whole sunlet ideal. Furthermore, the number of variables in the ring $Q_n$, where the sunlet ideal lives, also grows exponentially, whereas the number of variables in the ring $X_n$ grows polynomially. Thus, by considering only the affine open patch $\mathcal{V}_{\cs_n}$, we can significantly decrease the growth rate against the full sunlet variety.

\section*{Acknowledgements}
SM was supported by the Biotechnology and Biological Sciences Research Council, part of UK Research and Innovation, through the Core Capability Grant BB/CCG1720/1 at the Earlham Institute and is grateful for HPC support from NBI’s Research Computing group. 
BH was supported by the Alexander von Humboldt Foundation.  EG and IN were supported by the National Science Foundation grant DMS-1945584. BH and SM are grateful for funding from the Earlham Institute's Flexible Talent Mobility Award (BB/X017761/1), which facilitated a research exchange on which part of this work was performed. 

This project was initiated at the ``Algebra of Phylogenetic Networks Workshop" held at the University of Hawai`i at M\={a}noa and supported by National Science Foundation grant DMS-1945584. Additional parts of this research were performed while JC, EG, BH, and IN were visiting the Institute for Mathematical and Statistical Innovation (IMSI) for the semester-long program on ``Algebraic Statistics and Our Changing World." IMSI is supported by the National Science Foundation (Grant No. DMS-1929348).

We would also like to thank David Speyer for suggesting the connection to Pfaffians.

\bibliographystyle{plain}
\bibliography{refs.bib}

\begin{thebibliography}{10}

\bibitem{allman2019nanuq}
Elizabeth~S Allman, Hector Ba{\~n}os, and John~A Rhodes.
\newblock Nanuq: a method for inferring species networks from gene trees under the coalescent model.
\newblock {\em Algorithms for Molecular Biology}, 14:1--25, 2019.

\bibitem{allman2019species}
Elizabeth~S. Allman, Colby Long, and John~A. Rhodes.
\newblock Species tree inference from genomic sequences using the log-det distance.
\newblock {\em SIAM Journal on Applied Algebra and Geometry}, 3(1):107--127, 2019.

\bibitem{allman2003invariants}
Elizabeth~S. Allman and John~A. Rhodes.
\newblock Phylogenetic invariants for the general markov model of sequence mutation.
\newblock {\em Mathematical Biosciences}, 186(2):113--144, 2003.

\bibitem{allman2008gmm}
Elizabeth~S. Allman and John~A. Rhodes.
\newblock Phylogenetic ideals and varieties for the general markov model.
\newblock {\em Advances in Applied Mathematics}, 40(2):127--148, 2008.

\bibitem{banos2019identifying}
Hector Ba{\~n}os.
\newblock Identifying species network features from gene tree quartets under the coalescent model.
\newblock {\em Bulletin of mathematical biology}, 81:494--534, 2019.

\bibitem{barton2022statistical}
Travis Barton, Elizabeth Gross, Colby Long, and Joseph Rusinko.
\newblock Statistical learning with phylogenetic network invariants.
\newblock {\em arXiv 2211.11919}, 2022.

\bibitem{casanellas2007geometry}
Marta Casanellas and Jes{\'u}s Fern{\'a}ndez-S{\'a}nchez.
\newblock Geometry of the kimura 3-parameter model.
\newblock {\em Advances in Applied Mathematics}, 41:265--292, 2007.

\bibitem{casanellas2007performance}
Marta Casanellas and Jes{\'u}s Fern{\'a}ndez-S{\'a}nchez.
\newblock Performance of a new invariants method on homogeneous and nonhomogeneous quartet trees.
\newblock {\em Molecular Biology and Evolution}, 24(1):288--293, 2007.

\bibitem{casanellas2021rank}
Marta Casanellas and Jes{\'u}s Fern{\'a}ndez-S{\'a}nchez.
\newblock Rank conditions on phylogenetic networks.
\newblock In {\em Extended Abstracts GEOMVAP 2019: Geometry, Topology, Algebra, and Applications; Women in Geometry and Topology}, pages 65--69. Springer, 2021.

\bibitem{cavender1978taxonomy}
James~A Cavender.
\newblock Taxonomy with confidence.
\newblock {\em Mathematical Biosciences}, 40(3-4):271--280, 1978.

\bibitem{Cayley1849}
A.~Cayley.
\newblock Sur les déterminants gauches. (suite du mémoire t. xxxii. p. 119).
\newblock {\em Journal für die reine und angewandte Mathematik}, 38:93--96, 1849.

\bibitem{coxlittleoshea}
David~A. Cox, John Little, and Donal O'Shea.
\newblock {\em Ideals, Varieties, and Algorithms: An Introduction to Computational Algebraic Geometry and Commutative Algebra}.
\newblock Springer Publishing Company, Incorporated, 3rd edition, 2010.

\bibitem{cummings2023computing}
Joseph Cummings and Benjamin Hollering.
\newblock Computing implicitizations of multi-graded polynomial maps, 2023.

\bibitem{cummings2024invariants}
Joseph Cummings, Benjamin Hollering, and Christopher Manon.
\newblock Invariants for level-1 phylogenetic networks under the {C}avendar-{F}arris-{N}eyman model.
\newblock {\em Advances in Applied Mathematics}, 153:102633, 2024.

\bibitem{evans1993invariants}
Steven~N. Evans and T.~P. Speed.
\newblock Invariants of some probability models used in phylogenetic inference.
\newblock {\em Ann. Statist.}, 21(1):355--377, 1993.

\bibitem{farris1973probability}
James~S Farris.
\newblock A probability model for inferring evolutionary trees.
\newblock {\em Systematic Zoology}, 22(3):250--256, 1973.

\bibitem{Gorla2007}
Elisa Gorla.
\newblock Mixed ladder determinantal varieties from two-sided ladders.
\newblock {\em J. Pure Appl. Algebra}, 211(2):433--444, 2007.

\bibitem{M2}
Daniel~R. Grayson and Michael~E. Stillman.
\newblock Macaulay2, {V}ersion 1.20, 2022.
\newblock {\tt http://www.math.uiuc.edu/Macaulay2/}.

\bibitem{gross2023dimensions}
Elizabeth Gross, Robert Krone, and Samuel Martin.
\newblock Dimensions of level-1 group-based phylogenetic networks.
\newblock {\em Bulletin of Mathematical Biology}, 86:90, 2024.

\bibitem{gross2018distinguishing}
Elizabeth Gross and Colby Long.
\newblock Distinguishing phylogenetic networks.
\newblock {\em SIAM Journal on Applied Algebra and Geometry}, 2(1):72--93, 2018.

\bibitem{gross2020phylogenetic}
Elizabeth Gross, Colby Long, and Joseph Rusinko.
\newblock Phylogenetic networks.
\newblock {\em A Project-Based Guide to Undergraduate Research in Mathematics: Starting and Sustaining Accessible Undergraduate Research}, pages 29--61, 2020.

\bibitem{gross2021distinguishing}
Elizabeth Gross, Leo van Iersel, Remie Janssen, Mark Jones, Colby Long, and Yukihiro Murakami.
\newblock Distinguishing level-1 phylogenetic networks on the basis of data generated by markov processes.
\newblock {\em Journal of Mathematical Biology}, 83:1--24, 2021.

\bibitem{hendy1996complete}
Michael~D Hendy and David Penny.
\newblock Complete families of linear invariants for some stochastic models of sequence evolution, with and without the molecular clock assumption.
\newblock {\em Journal of Computational Biology}, 3(1):19--31, 1996.

\bibitem{hollering2021identifiability}
Benjamin Hollering and Seth Sullivant.
\newblock Identifiability in phylogenetics using algebraic matroids.
\newblock {\em Journal of Symbolic Computation}, 104:142--158, 2021.

\bibitem{kimura1981estimation}
Motoo Kimura.
\newblock Estimation of evolutionary distances between homologousnucleotide sequences.
\newblock {\em Proceedings of the National Academy of Sciences}, 78(1):454--458, 1981.

\bibitem{ColbyPfaff}
Colby Long.
\newblock {Initial ideals of Pfaffian ideals}.
\newblock {\em Journal of Commutative Algebra}, 12(1):91 -- 105, 2020.

\bibitem{martin2023algebraic}
Samuel Martin, Vincent Moulton, and Richard~M. Leggett.
\newblock Algebraic invariants for inferring 4-leaf semi-directed phylogenetic networks.
\newblock {\em bioRxiv 2023.09.11.557152}, 2023.

\bibitem{mpmath}
The mpmath~development team.
\newblock {\em mpmath: a {P}ython library for arbitrary-precision floating-point arithmetic}, 2023.
\newblock {\tt http://mpmath.org/}.

\bibitem{mathrepo}
{MATHREPO} {Mathematical Data and Software}.
\newblock \url{https://mathrepo.mis.mpg.de/PfaffianPhylogeneticNetworks}, 2023.
\newblock [Online; accessed 17 October 2023].

\bibitem{mumfordRedBook}
David Mumford.
\newblock {\em The red book of varieties and schemes}, volume 1358 of {\em Lecture Notes in Mathematics}.
\newblock Springer-Verlag, Berlin, expanded edition, 1999.
\newblock Includes the Michigan lectures (1974) on curves and their Jacobians, With contributions by Enrico Arbarello.

\bibitem{neyman1971statistical}
Jerzy Neyman.
\newblock Molecular studies of evolution: A source of novel statistical problems.
\newblock In Shanti~S. Gupta and James Yackel, editors, {\em Statistical Decision Theory and Related Topics}, pages 1--27. Elsevier, 1971.

\bibitem{norris1997markov}
J~R Norris.
\newblock {\em Markov Chains}.
\newblock Cambridge University Press, 1st edition, 1997.

\bibitem{rhodes2012identifiability}
John~A Rhodes and Seth Sullivant.
\newblock Identifiability of large phylogenetic mixture models.
\newblock {\em Bulletin of Mathematical Biology}, 74:212--231, 2012.

\bibitem{scikitbio}
The scikit-bio~development team.
\newblock {\em scikit-bio: A Bioinformatics Library for Data Scientists, Students, and Developers}, 2020.
\newblock {\tt http://scikit-bio.org/}.

\bibitem{Speyer-Sturmfels}
David Speyer and Bernd Sturmfels.
\newblock The tropical {G}rassmannian.
\newblock {\em Adv. Geom.}, 4(3):389--411, 2004.

\bibitem{steel2016phylogeny}
Mike Steel.
\newblock {\em Phylogeny: discrete and random processes in evolution}.
\newblock SIAM, 2016.

\bibitem{sturmfels2005toric}
Bernd Sturmfels and Seth Sullivant.
\newblock Toric ideals of phylogenetic invariants.
\newblock {\em Journal of Computational Biology}, 12(2):204--228, 2005.

\bibitem{Sturmfels2020}
Bernd Sturmfels, Caroline Uhler, and Piotr Zwiernik.
\newblock Brownian motion tree models are toric.
\newblock {\em Kybernetika}, 56(6):1154--1175, 2020.

\bibitem{sullivant2007toric}
Seth Sullivant.
\newblock Toric fiber products.
\newblock {\em J. Algebra}, 316(2):560--577, 2007.

\bibitem{algstat}
Seth Sullivant.
\newblock {\em Algebraic statistics}, volume 194 of {\em Graduate Studies in Mathematics}.
\newblock American Mathematical Society, Providence, RI, 2018.

\bibitem{wu2022ultrafast}
Zhaoxing Wu and Claudia Solis-Lemus.
\newblock Ultrafast learning of 4-node hybridization cycles in phylogenetic networks using algebraic invariants.
\newblock {\em arXiv preprint arXiv:2211.16647}, 2022.

\end{thebibliography}

\end{document}